\definecolor{ultramarine}{RGB}{0,32,96}
\definecolor{hookersgreen}{rgb}{0.0, 0.44, 0.0}
\newcommand{\diff}[1]{\frac{d}{d#1}}
\newcommand\numgeq[1]%
  \newcommand\numleq[1]%
\newcommand{\defined}{\mathrel{\mathop:}=}
\newcommand{\definedas}{=\mathrel{\mathop:}}
\newcommand{\vol}{\begin{normalfont}\text{vol}\end{normalfont}}
\newcommand{\kom}[1]{}
\newcommand{\cupcap}{\pm}
\DeclareMathOperator{\esssup}{\begin{normalfont}\text{ess sup}\end{normalfont}}
\DeclareMathOperator{\reach}{\begin{normalfont}\text{reach}\end{normalfont}}
\newtheorem{thm}{Theorem}
\newtheorem{rmk}{Remark}
\newtheorem{lem}{Lemma}
\newtheorem{cor}{Corollary}
\newtheorem{defn}{Definition}
\newtheorem{prop}{Proposition}
\begin{document}

\begin{frontmatter}

\title{Adaptive Manifold Clustering\thanksref{t1}}
\runtitle{Adaptive Manifold Clustering}

 \author{\fnms{Franz} \snm{Besold}\corref{}}
 \thankstext{t1}{Financial support by German Ministry for Education via the Berlin Center for
Machine Learning (01IS18037I) is gratefully acknowledged.} 
\address{Weierstrass Institute, Mohrenstr. 39, 10117 Berlin, Germany\\ \href{mailto:franz.besold@wias-berlin.de}{\textcolor{ultramarine}{franz.besold@wias-berlin.de}}}
\and
\author{\fnms{Vladimir} \snm{Spokoiny}\thanksref{t2}}
 \thankstext{t2}{The research was supported by the Russian Science Foundation grant No. 18-11-00132.} 
\address{Weierstrass Institute and HU Berlin,
HSE University, IITP RAS,\\
Mohrenstr. 39, 10117 Berlin, Germany \\ \href{mailto:vladimir.spokoiny@wias-berlin.de}{\textcolor{ultramarine}{vladimir.spokoiny@wias-berlin.de}}}

\runauthor{F. Besold and V. Spokoiny}

\begin{abstract}
Clustering methods seek to partition data such that elements are more similar to elements in the same cluster than to elements in different clusters. 
The main challenge in this task is the lack of a unified definition of a cluster, especially for high-dimensional data. 
Different methods and approaches have been proposed to address this problem. 
This paper continues the study originated by 
\cite{AWC} where a novel approach to adaptive nonparametric clustering called \emph{Adaptive Weights Clustering} (AWC) 
was offered. 
The method allows analyzing 
high-dimensional data with an unknown number of unbalanced clusters of arbitrary shape under very weak modeling assumptions. 
The procedure demonstrates a state-of-the-art performance and is very efficient even for large data dimension $D$.
However, the theoretical study in \cite{AWC} is very limited and did not really address the question of efficiency.
This paper makes a significant step in 
understanding the promising performance of the AWC procedure, particularly in high dimension.
The approach is based on combining the ideas of adaptive clustering and manifold learning.
The manifold hypothesis means that high-dimensional data
can be well approximated by a $d$-dimensional manifold for small $d$
helping to overcome the \emph{curse of dimensionality} problem and to get sharp bounds on the cluster separation 
which only depend on the intrinsic dimension $d$. 
We also address the problem of parameter tuning. 
Our general theoretical results are illustrated by some numerical experiments.
\end{abstract}

\begin{keyword}[class=MSC]
\kwd[Primary ]{62H30}
\kwd{}
\kwd[Secondary ]{62G10}
\end{keyword}

\begin{keyword}
\kwd{adaptive weights}
\kwd{likelihood-ratio test}
\kwd{nonparametric clustering}
\kwd{manifold}
\kwd{reach}
\end{keyword}

\end{frontmatter}

\newpage
\tableofcontents

\section{Introduction}
\subsection{Manifold Clustering}
The task of clustering is often informally described as partitioning a set of objects such that objects in the same group are more similar to each other than to those in other groups.
The lack of a unified definition has led to a range of algorithms with different objectives. 
One of the oldest and best-known procedures are centroid-based methods such as k-means \cite{k-means}. 
Other well-known approaches are density-based methods, like DBSCAN \cite{DBSCAN} or spectral methods \cite{spectral_clustering}. 
For a comprehensive survey of clustering methods, we refer to \cite{survey_clustering}. 
A more general task is to obtain a hierarchical collection of clusters, the so-called \emph{density cluster tree} \cite{hartigan_cluster_tree}. This problem has been studied thoroughly, see e.g. \cite{separation_condition_clusters_for_tree}, \cite{Kpotufev2011_cluster_tree}, \cite{eldridge15_cluster_tree} and \cite{belakrishnan_cluster_tree_on_manifold} for more recent work. Allthough this approach avoids the choice of a scale parameter, it utilizes a specific definition of clusters beeing connected components of superlevel sets of the underlying density.
In this paper, we study a nonparametric clustering algorithm originated from \cite{AWC} and called \emph{Adaptive Weights Clustering (AWC)}. 
It is \emph{adaptive} as it does not require the user to specify the number of clusters, and it is able to recover clusters of different size, level of density and shape, including non-convex clusters.
The cluster structure of the data is represented by an adjacency matrix containing binary entries, so-called \emph{weights}, hence the name. The adjacency matrix is not guaranteed to correspond to a partition of the data, but rather will give information about local clusters for each data point. Informally speaking, the objective of the algorithm is to find maximal subsets of the data without any significant gap, that is a region within the cluster adjoining two areas in opposite direction of relatively larger density. This novel objective is in fact the reason for the high adaptivity of AWC to clusters with very different structural properties. 

This paper focuses on a theoretical study of the algorithm, as \cite{AWC} already provides a comprehensive comparative numerical study. In particular, we want to address the challenges that arise from high-dimensional data that does not concentrate on lower-dimensional linear subspaces and where the PCA analysis does not yield a significant spectral gap. We are therefore interested in the case of high-dimensional data lying close to a lower-dimensional submanifold \( \mathcal M \). This setup has already been studied for other clustering algorithms, e.g. in \cite{belakrishnan_cluster_tree_on_manifold} and \cite{jiang17a_DBSCAN_on_manifold}. Moreover, it appears in the context of homology inference \cite{different_noise_conditions_on_manifold}. It has been shown that this is a realistic model for various data, e.g. for images which are represented in a patch space \cite{mfd_images, ldmm} and a wide range of algorithms have been proposed to deal with the problem of non-linear dimension reduction \cite{dimension_reduction_review}, e.g. multidimensional scaling (MDS), kernel PCA, Isomap, Laplacian eigenmaps, self-organizing maps (SOM), locally-linear embeddings and autoencoders \cite{autoencoder}. In this work, we will not rely on any of these techniques, however, we recommend using a manifold denoising algorithm in practice such as \cite{mfd_denoising4} as an additional preprocessing step in order to reduce the magnitude of the noise.


\subsection{Submanifolds with positive reach}

As regularity condition for the manifold we assume a positive \emph{reach}, see Definition \ref{defn_reach}. 

\begin{defn}\label{defn_reach}
For \( \epsilon > 0 \) and a set \( S\subset \mathbb R^D \), let us denote the \( \epsilon \)-offset of \( S \) by 
\begin{align*}
S^\epsilon &\hspace{3pt}=\{y\in \mathbb R^D: \exists x\in S \text{ with }\|x-y\| \leq \epsilon\}
\end{align*}
and define the \emph{reach} of S to be
\[\hspace{3pt}\reach (S) := \sup\{r \geq 0 : \forall y \in S^r \text{ there exists a unique } x \in S \text{ nearest to }y\}.\]
\end{defn}

Originally introduced by \cite{Federer}, a positive reach has proven to be a widely used minimal condition in geometric and topological inference, c.f. \cite{chazal_book}. This includes in particular the topics of manifold estimation \cite{manifold_estimation_wasserman}, \cite{manifold_estimation_aamari} as well as homology inference \cite{homology_niyogi_smale_weinberger}, \cite{different_noise_conditions_on_manifold}. The latter can in fact be seen as a generalization of the clustering problem.

If a set has a positive reach \( \frac{1}{\kappa} \), it is also \( \frac{1}{\kappa} \)-convex and one can freely roll a ball of radius \( r < \frac{1}{\kappa}  \) around it \cite{rolling_ball}. The reach provides information about the local and the global structure of the manifold at the same time \cite{reach_estimation}: Any unit speed geodesic of a compact smooth submanifold \( \mathcal M \) without boundary with \( \text{reach}(\mathcal M) \geq \frac{1}{\kappa}> 0 \) has a curvature bounded by \( \kappa \) and also any so-called bottleneck, i.e. a point on the manifold that has two distinct projections onto the manifold in exactly opposite directions, has a distance of at least \( \frac{1}{\kappa} \) to \( \mathcal M \). More precisely, it can be shown that the reach is either attained by the curvature of a unit speed geodesic or is equal to the distance of a bottleneck to the manifold. See Figure \ref{fig_reach} for a visualization. Moreover, \( \mathcal M \) has a local Lipschitz continuous parametrization in terms of the tangent plane, see Lemma \ref{lem_local_lipschitz_param}. We exploit this property, using that any \( L \) -Lipschitz function changes the \( d \) -dimensional Lebesgue volume at most by a factor \( L^d \), see Lemma \ref{lem_volume_lipschitz}. For a survey on sets with positive reach see \cite{survey_reach}.

\begin{figure}[t]
 \begin{minipage}[c]{0.49\textwidth}
 \centering
\includegraphics[scale = 0.14025]{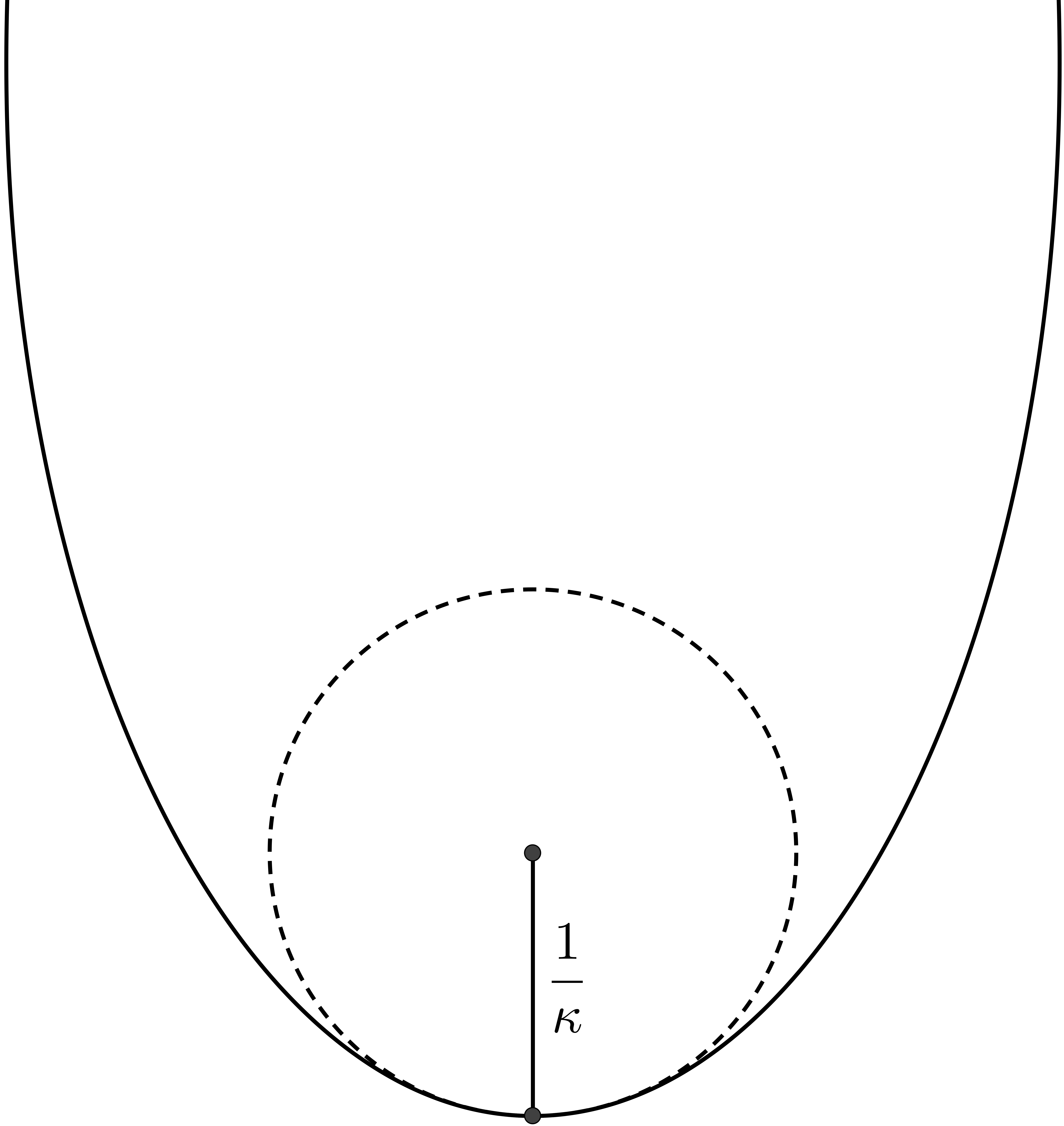}
 \end{minipage}
  \begin{minipage}[c]{0.49\textwidth}
  \centering
\includegraphics[scale = 0.15]{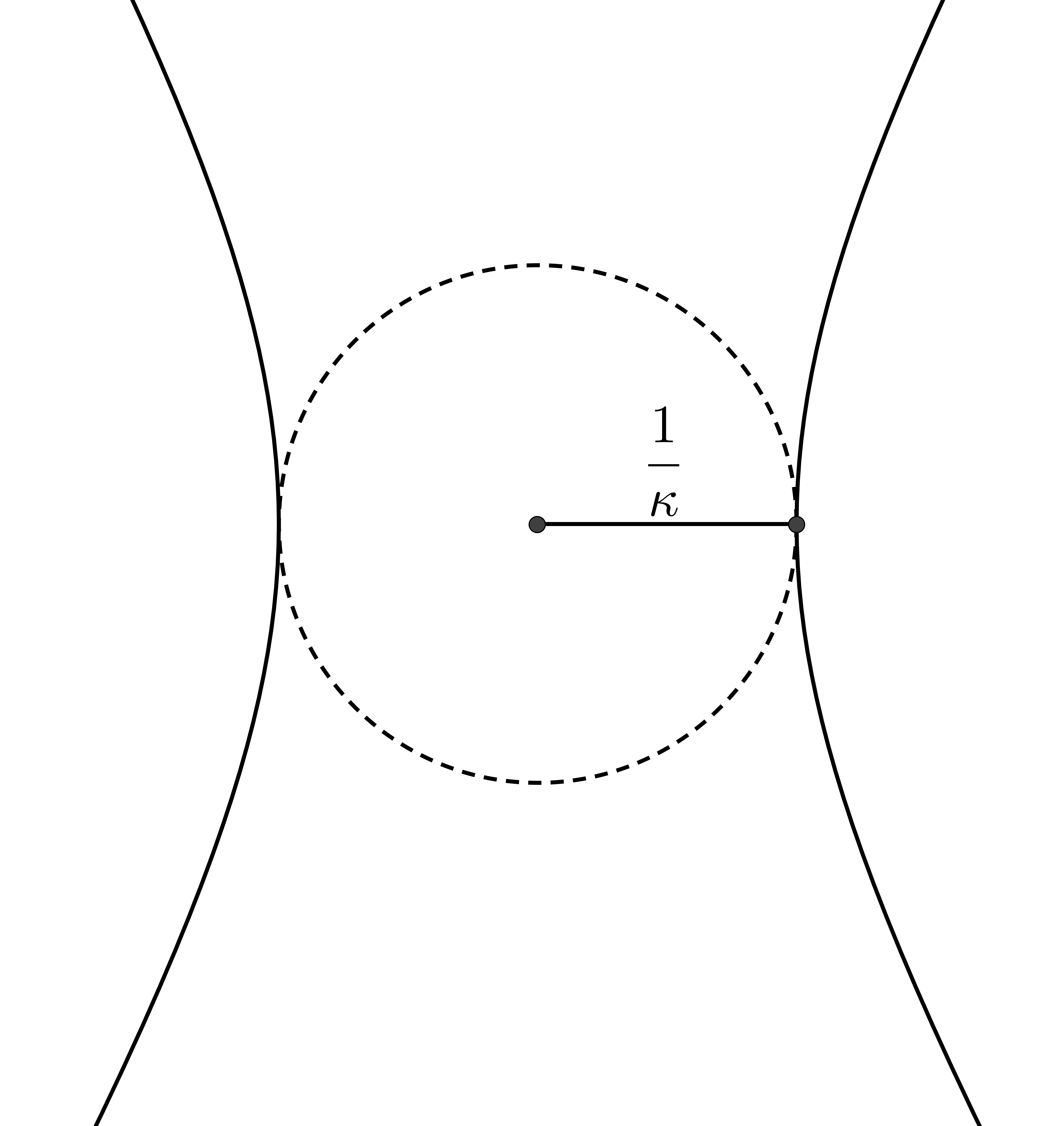}
 \end{minipage}
 \caption{The reach of a manifold can be either attained by the curvature radius of a geodesic (left) or the distance to a bottleneck (right)}
 \label{fig_reach}
\end{figure}

\subsection{AWC revisited}
The key ingredient of the AWC procedure is a so-called \emph{test of no gap}, which is based on a likelihood-ratio test for local homogeneity from \cite{Polzehl}. 
Given a sequence of radii \( 0<h_0 < \dots < h_K \) in addition to our data \( X_1,\dots, X_n\in\mathbb R^D \) and using the test of no gap, the algorithm successively screens subsets of increasing diameters. 
Using information from previous steps, AWC defines at each step \( k \) around each point \( X_i \) a so-called \emph{local cluster} \( \mathcal C_i^{(k)} \) that is supposed to be a maximal subset of the data in a vicinity of the given radius \( h_k \) satisfying the no gap objective.

In the following, let us explain the main idea of the algorithm more formally. An exact description via pseudocode is given in Algorithm \ref{algorithm}. By \( \|\cdot \| \) we denote the euclidean norm, \( \lambda \) denotes the \( D \)-dimensional Lebesgue measure and \( B(\cdot, \cdot) \) is the usual notation for a closed euclidean Ball in \( \mathbb R^D \) with given center and radius. 
Suppose our data \( X_1, \dots, X_n \in \mathbb  R^D \) is sampled independently from a common probability distribution \( \mathbb P \). Using regular conditional distributions, let us treat \( X_i \) and \( X_j \) as deterministic for some \( i\neq j \). From a given sequence of radii \( h_0 < h_1 < \dots < h_K \) s.t. \( \frac{h_{l+1}}{h_l} < 2 \) we choose \( h_k \) such that \( \|X_i-X_j\| < h_k \) and define the so-called \emph{gap coefficient} 
\[ \theta_{ij}^{(k)} = \frac{\mathbb P\left(B(X_i, h_{k-1})\cap B(X_j, h_{k-1})\right)}{\mathbb P\left(B(X_i, h_{k-1})\cup B(X_j, h_{k-1})\right)} . \]
In case of our distribution being uniform on a neighborhood of \( B(X_i, h_k)\cup B(X_j, h_k) \), or more generally, having a linear density, the gap coefficient coincides with the so-called \emph{volume coefficient}
\[ q_{ij}^{(k)} = \frac{\lambda \left(B(X_i, h_{k-1})\cap B(X_j, h_{k-1})\right)}{\lambda \left(B(X_i, h_{k-1})\cup B(X_j, h_{k-1})\right)} . \]
\begin{figure}[t]
 \begin{minipage}[c]{0.49\textwidth}
 \centering
\includegraphics[scale = 0.18]{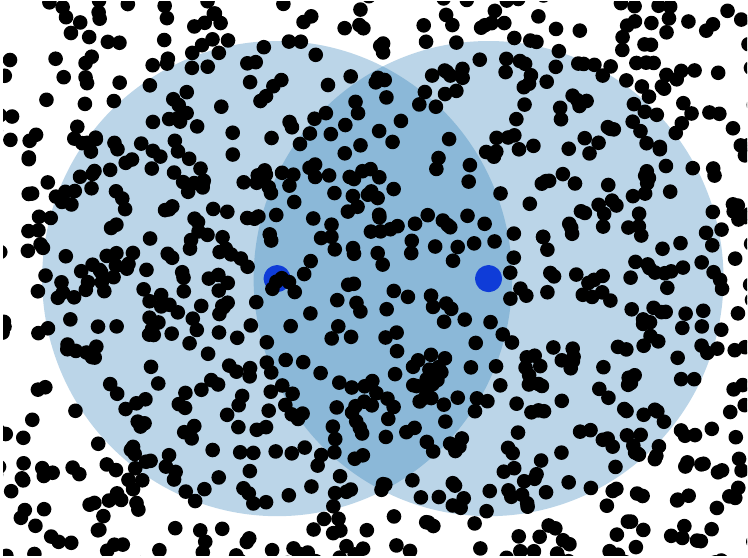}
 \end{minipage}
  \begin{minipage}[c]{0.49\textwidth}
  \centering
\includegraphics[scale = 0.18]{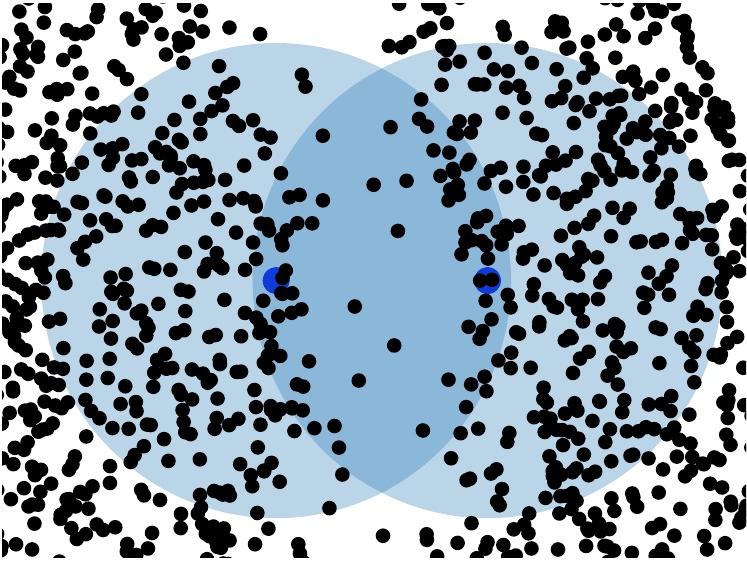}
 \end{minipage}
 \caption{For locally homogeneous data we observe \( \theta_{ij}^{(k)} \approx q_{ij}^{(k)} \) (left), whereas a significant gap is characterized by \( \theta_{ij}^{(k)} \ll q_{ij}^{(k)} \) (right)}
 \label{fig_gap}
\end{figure}
 \begin{figure}[t]
\centering
\includegraphics[width=12cm,height=2.15cm]{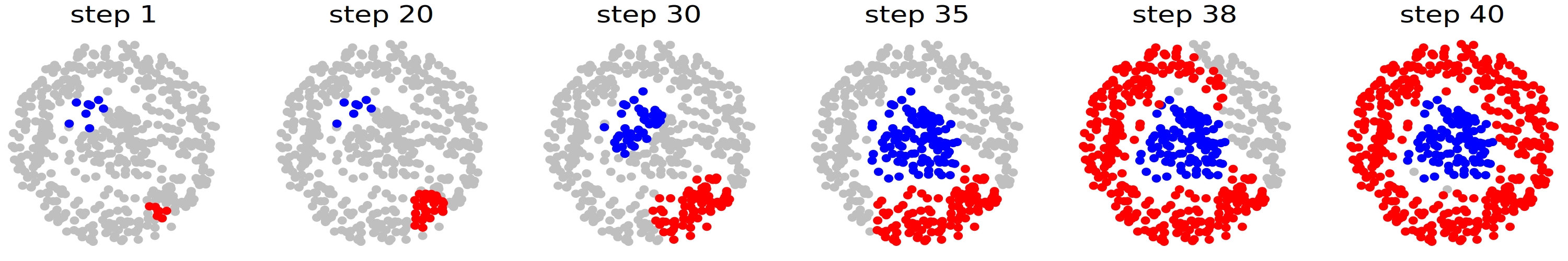}
\caption{Local clusters during different steps of the AWC algorithm}
\label{fig_steps}
\end{figure} 
In Figure \ref{fig_gap}, we visualize the relationship between those two quantities. The idea of a significant gap is formalized using a likelihood-ratio test of the null hypothesis
\[H_0 : \theta_{ij}^{(k)} \geq q_{ij}^{(k)} \]
against the alternative \[H_1 : \theta_{ij}^{(k)} < q_{ij}^{(k)}. \]
Suppose we are given binary weights \( w_{ij}^{(k-1)} = \mathds 1(\|X_i-X_j\| \leq h_{k-1}) \) and let us denote the local cluster around \( X_i \) of radius \( h_{k-1} \) by \( \mathcal C_i^{(k-1)} = \{X_j:w_{ij}^{(k-1)}=1\}  \). Then the corresponding test statistic can be written as
 \begin{equation}\label{eqn_test_statistic}
  T_{ij}^{(k)} =N_{i\lor j}^{(k)} \mathcal K(\widetilde\theta_{ij}^{(k)}, q_{ij}^{(k)})\left({\mathds 1}({\widetilde\theta_{ij}^{(k)}< q_{ij}^{(k)}})-\mathds 1({\widetilde\theta_{ij}^{(k)}\geq q_{ij}^{(k)}})\right), 
 \end{equation}
 where  \[N_{i\lor j}^{(k)} = \sum_{l \neq i, j}\mathds 1{(X_l \in \mathcal C_i^{(k-1)}\cup C_j^{(k-1)})}\]
 denotes the \emph{empirical  mass of the union}, \( \mathcal K(\alpha, \beta) \) denotes the Kullback-Leibler divergence of two Bernoulli variables with means \( \alpha \) and \( \beta \)  and
 \[\widetilde\theta_{ij}^{(k)}  = \frac{\sum_{l \neq i, j}\mathds 1{(X_l \in \mathcal C_i^{(k-1)}\cap C_j^{(k-1)})}}{N_{i\lor j}^{(k)}}\]
is an estimator for the gap coefficient. In the AWC algorithm, the assumption of the weights being of the non-adaptive form \( w_{ij}^{(k-1)} = \mathds 1(\|X_i-X_j\| \leq h_{k-1}) \) will only be guaranteed for the first step, as the weights are successively updated as 
\[w_{ij}^{(k)} =  \mathds 1(d(X_i, X_j)\leq h_k)\mathds 1(T_{ij}^{(k)}\leq \lambda)\]
 for some parameter \( \lambda \in\mathbb R \). That is, the so-called \emph{test of no gap} given in \eqref{eqn_test_statistic} that is used in the procedure does not necessarily coincide with the likelihood-ratio test, complicating the theoretical study. However, those successive updates allow the weights to carry information from all previous steps and enable the algorithm to detect gaps at any scale, in particular at a significantly smaller scale than the size of the final clusters.
 
 The output of the algorithm will be a weight matrix \( \left(w_{ij}^{(K)}\right)_{i, j = 1}^n \). Experiments have shown this matrix to carry relevant information about the cluster structure of the data. In fact, AWC performs well on artificial and real-live data benchmarks. However, there is no theoretical guarantee, that these weights actually describe the edge-disjoint union of fully connected graphs. The lack of a well-defined global cluster objective of AWC distinguishes it from most other methods and can be seen as a disadvantage from a comparative point of view. But from a practical point of view, this allows the algorithm to adapt well to a very inhomogeneous and unknown cluster structure. Moreover, the local cluster structure can also be seen as an advantage as it allows for overlapping clusters.
 
The idea of the no gap test seems similar to a density-based method such as DBSCAN. This is in fact true on a local level in most situations. However, the absolute density levels are irrelevant for the local decisions of the AWC procedure. Thus, the results on a global level differ significantly from those obtained at a certain level of a density level tree, c.f. figure \ref{fig_counterexamples_density_tree}.

 \begin{figure}[t]
\centering
\includegraphics[scale = 0.14]{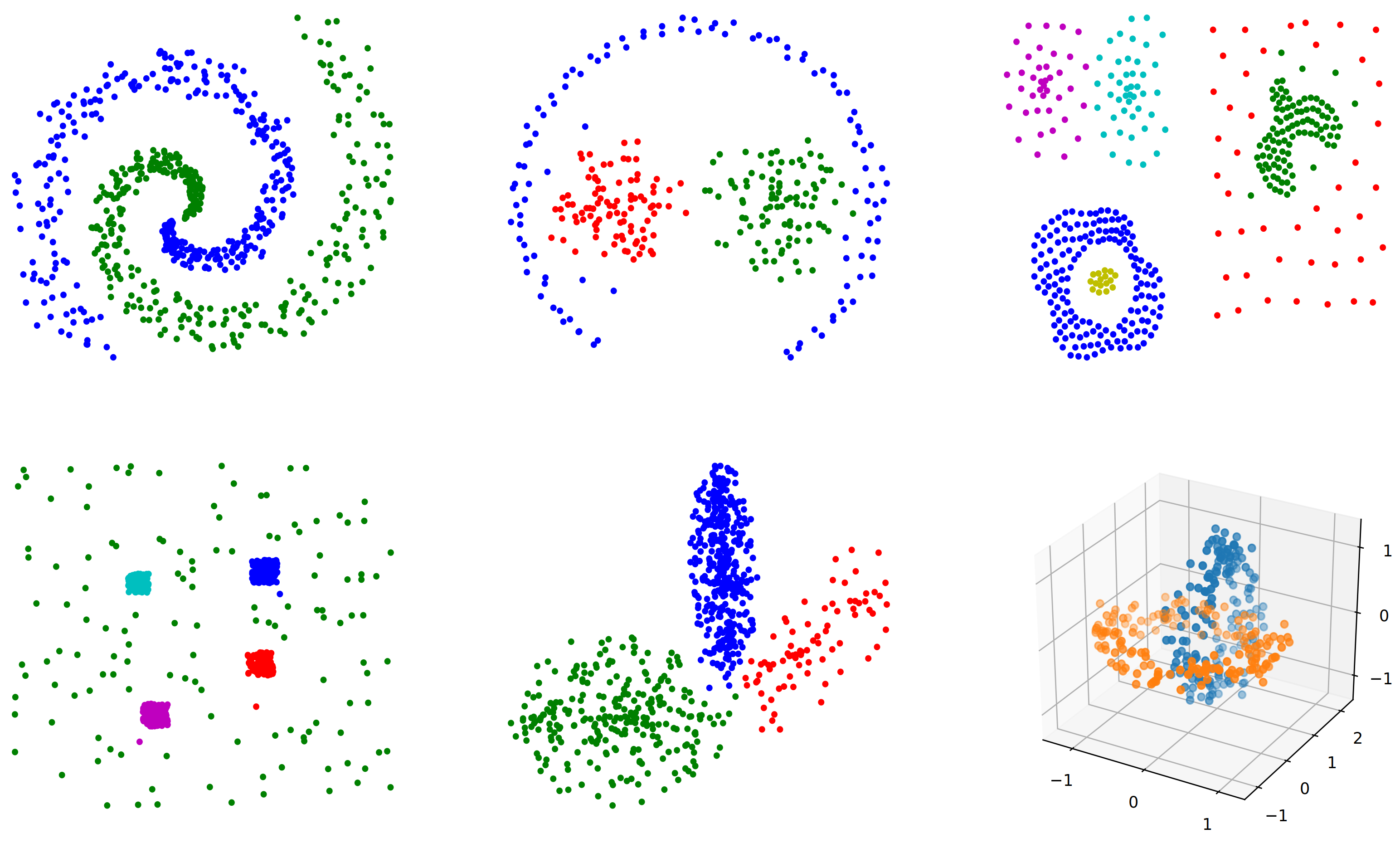}
\caption{Six artificial examples demonstrate the adaptivity of AWC w.r.t. clusters of different size and density, non-convex shapes and clusters with manifold structure. The top left and the bottom right examples are original data sets, the rest are taken from \cite{clustering_benchmarks}.}
\label{fig_examples}
\end{figure}

\kom{
\begin{figure}[t]
 \begin{minipage}[c]{0.34\textwidth}
 \centering
\includegraphics[scale = 0.23]{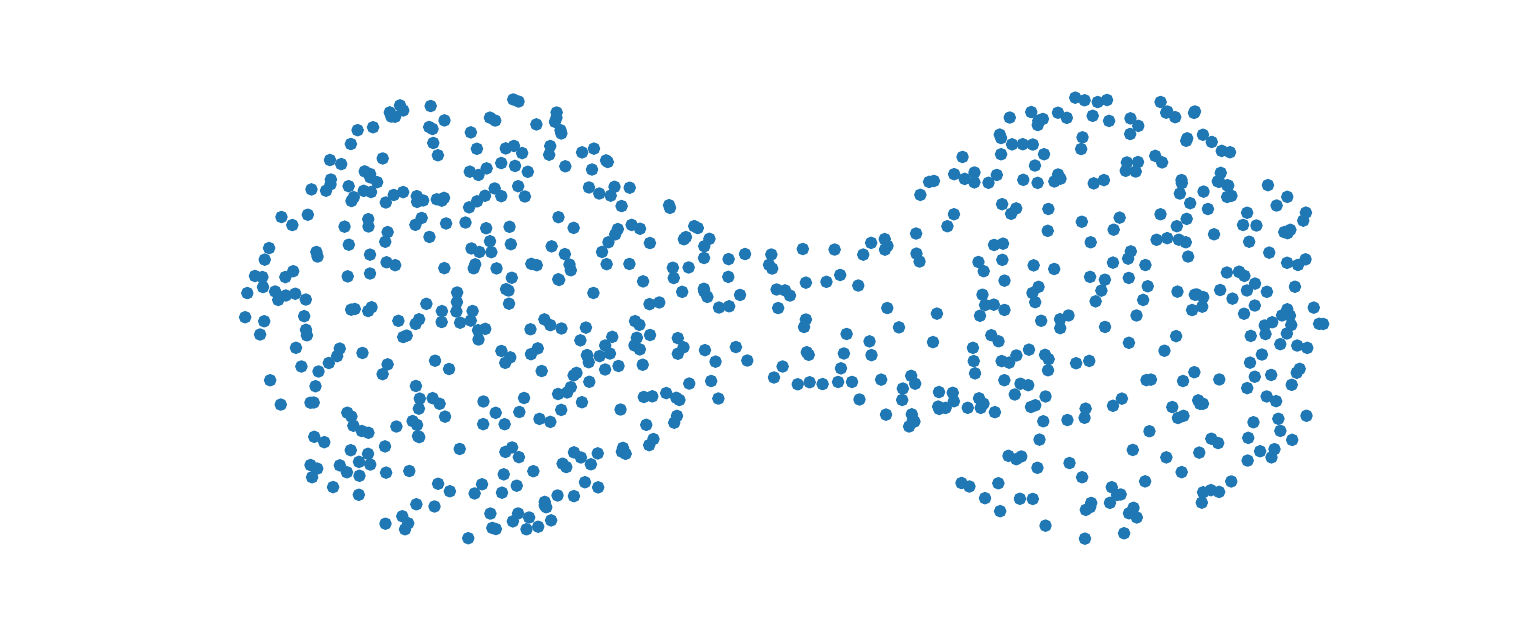}
 \end{minipage}
  \begin{minipage}[c]{0.56\textwidth}
  \centering
\includegraphics[scale = 0.28]{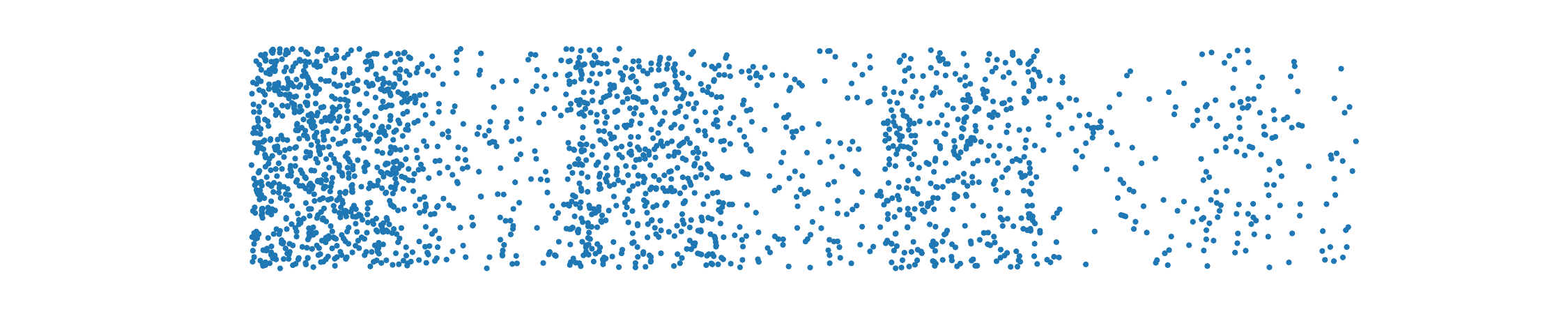}
 \end{minipage}\\
  \begin{minipage}[c]{0.34\textwidth}
 \centering
\includegraphics[scale = 0.23]{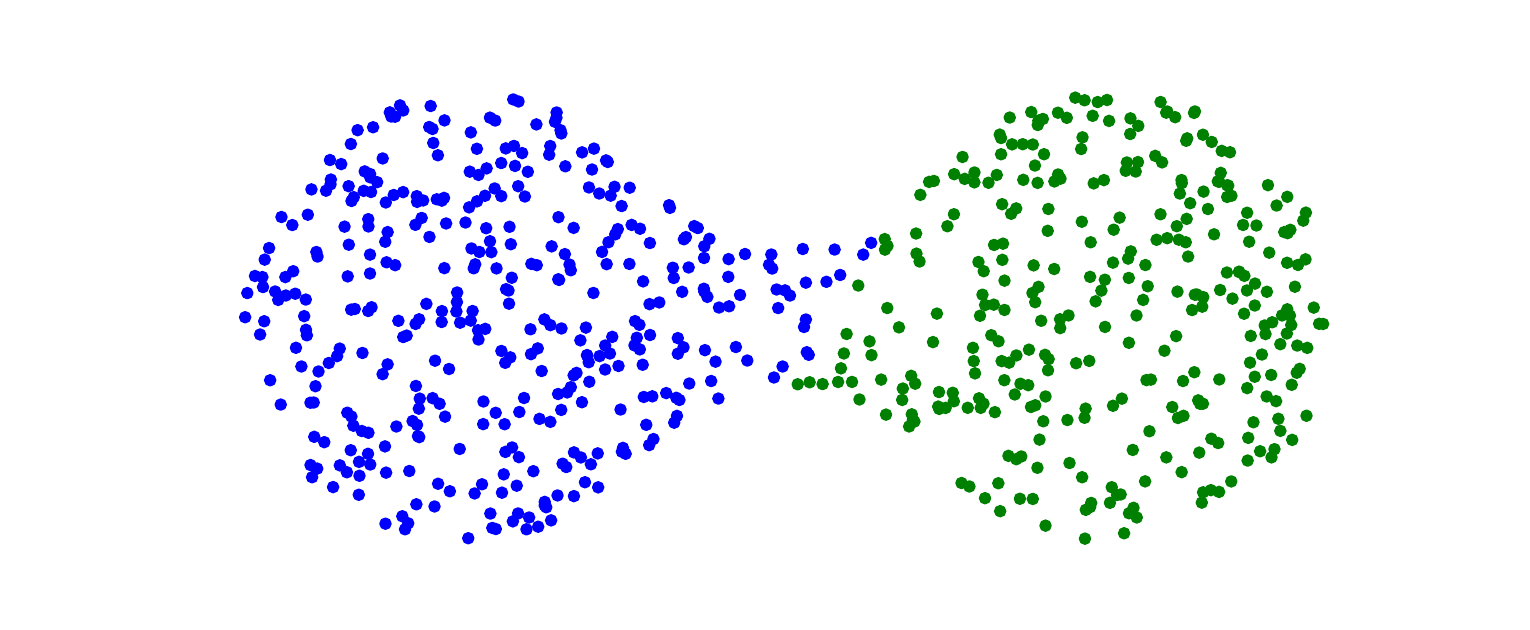}
 \end{minipage}
  \begin{minipage}[c]{0.56\textwidth}
  \centering
\includegraphics[scale = 0.28]{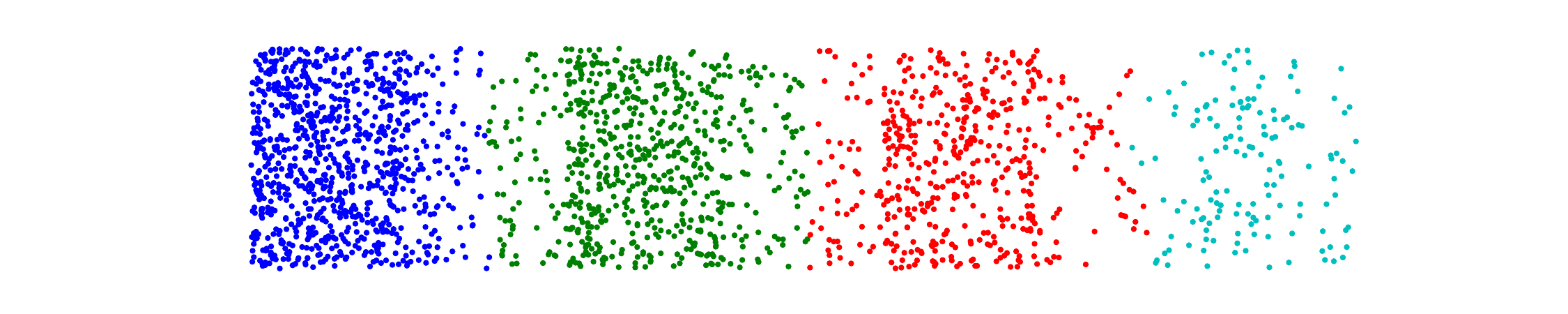}
 \end{minipage}
\caption{Two datasets (top) and the corresponding clusters obtained via AWC (bottom). These cluster structures differ from those obtained at a certain level of a density cluster tree, e.g. via DBSCAN. In the left example, this is because the density is constant, whereas in the right example the density levels of the different clusters and the spaces between them vary significantly.}
 \label{fig_counterexamples_density_tree}
\end{figure}
}

\begin{figure}
\begin{subfigure}{0.32 \textwidth}
    \includegraphics[width=\textwidth]{circles_clustered}
    \caption{AWC, \(\lambda = 10\)}
\end{subfigure}
\hfill
\begin{subfigure}{0.65 \textwidth}
    \includegraphics[width=\textwidth]{boxes_linear_clustered}
    \caption{AWC, \(\lambda = 20\)}
\end{subfigure}
\hfill
\begin{subfigure}{0.32\textwidth}
    \includegraphics[width=\textwidth]{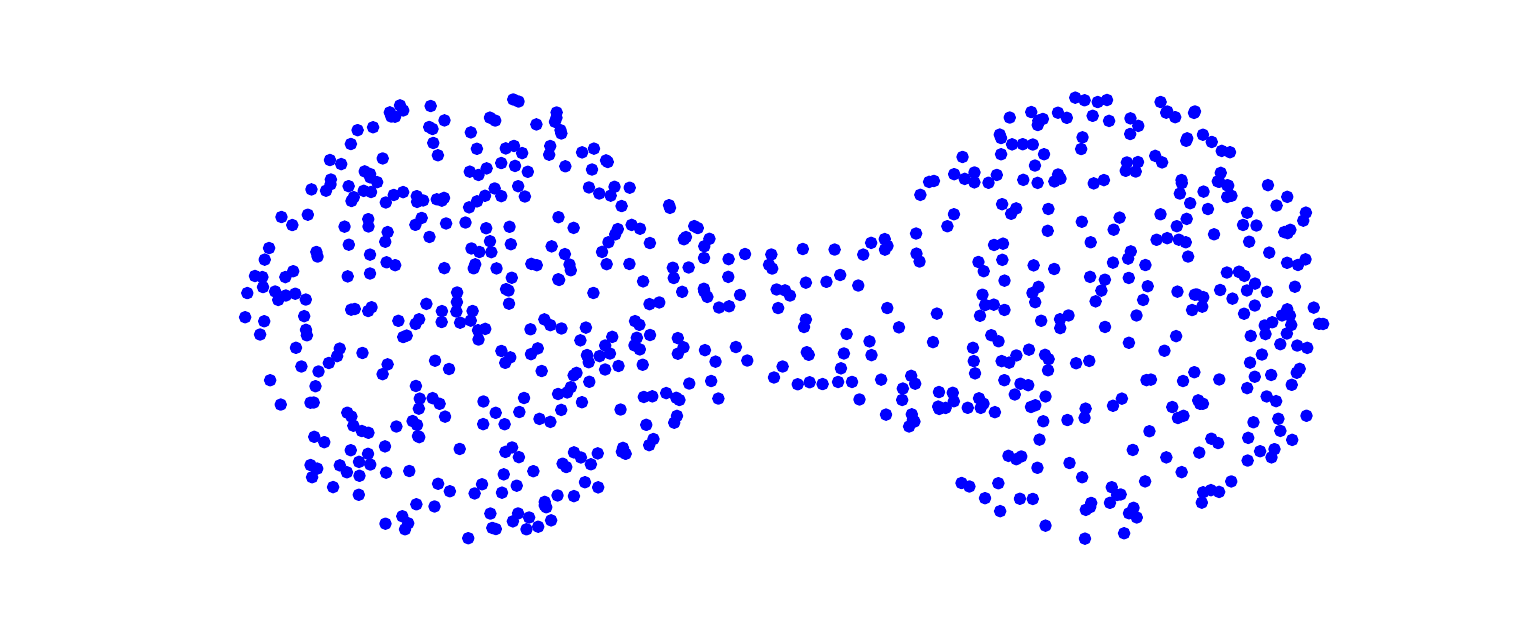}
    \caption{DBSCAN, \(\text{eps}=1.0\)}
\end{subfigure}
\hfill
\begin{subfigure}{0.65 \textwidth}
    \includegraphics[width=\textwidth]{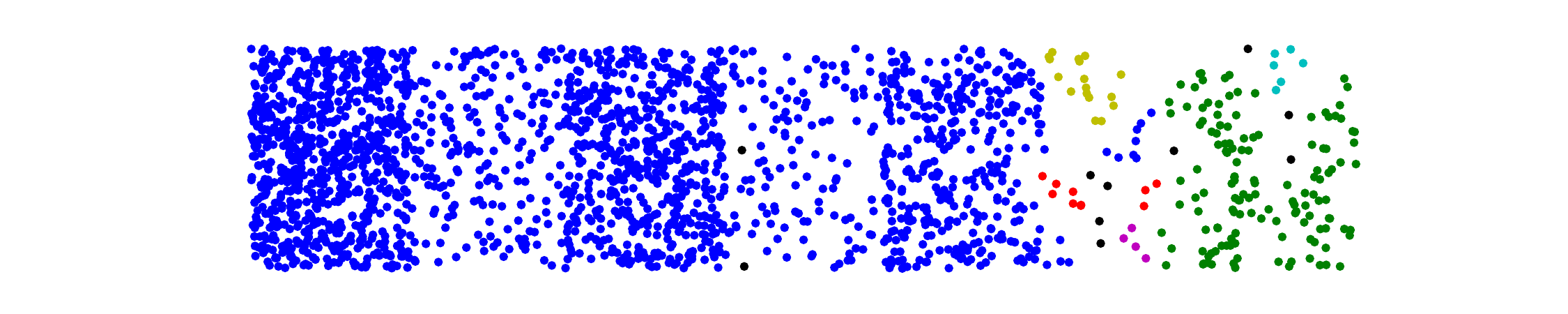}
    \caption{DBSCAN, \(\text{eps}=1.2\)}
\end{subfigure}
\hfill
\begin{subfigure}{0.32\textwidth}
    \includegraphics[width=\textwidth]{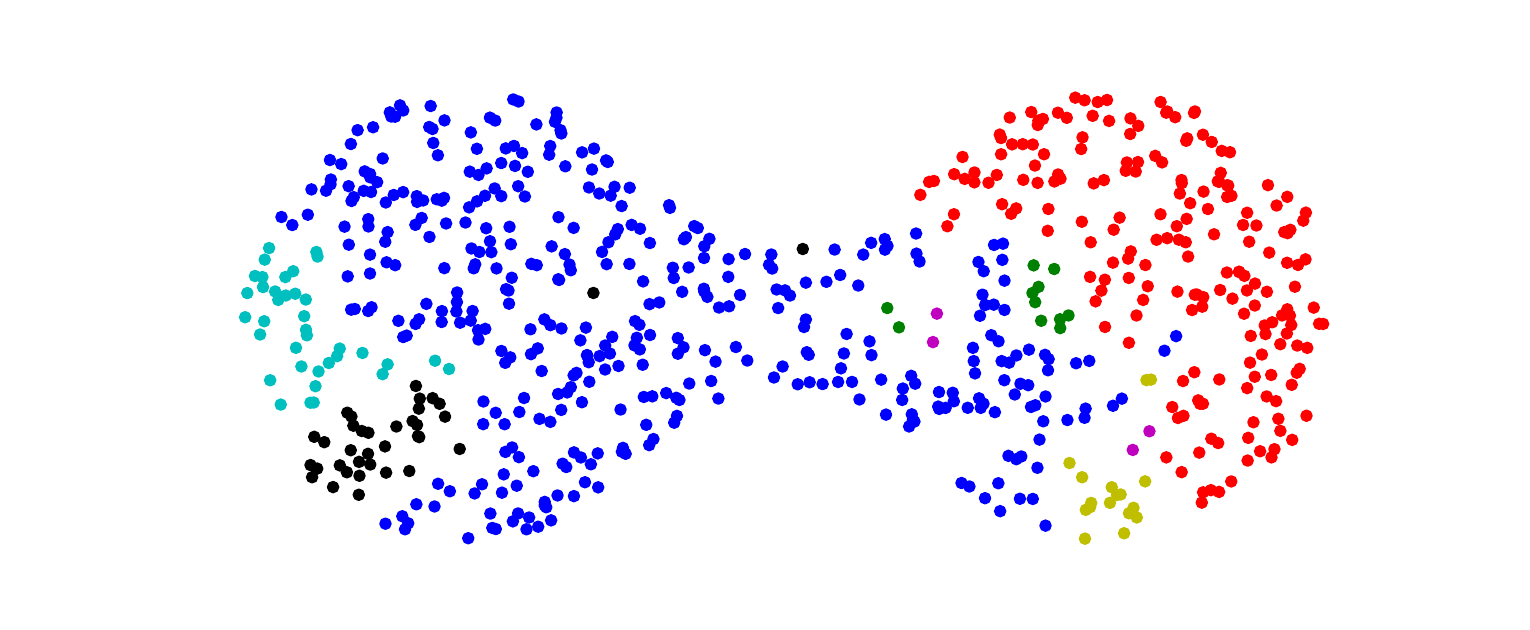}
    \caption{DBSCAN, \(\text{eps}=0.9\)}
\end{subfigure}
\hfill
\begin{subfigure}{0.65 \textwidth}
    \includegraphics[width=\textwidth]{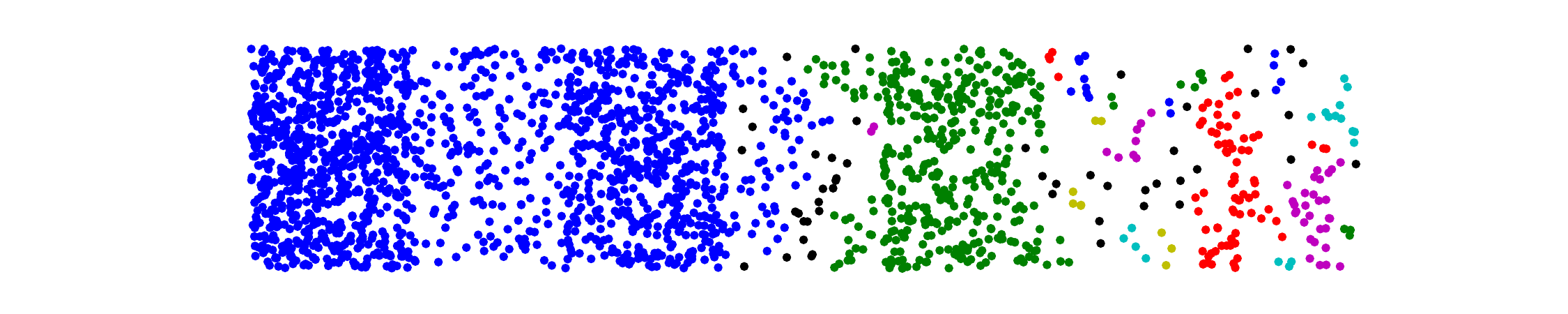}
    \caption{DBSCAN, \(\text{eps}=1.0\)}
\end{subfigure}
\hfill
\begin{subfigure}{0.32\textwidth}
    \includegraphics[width=\textwidth]{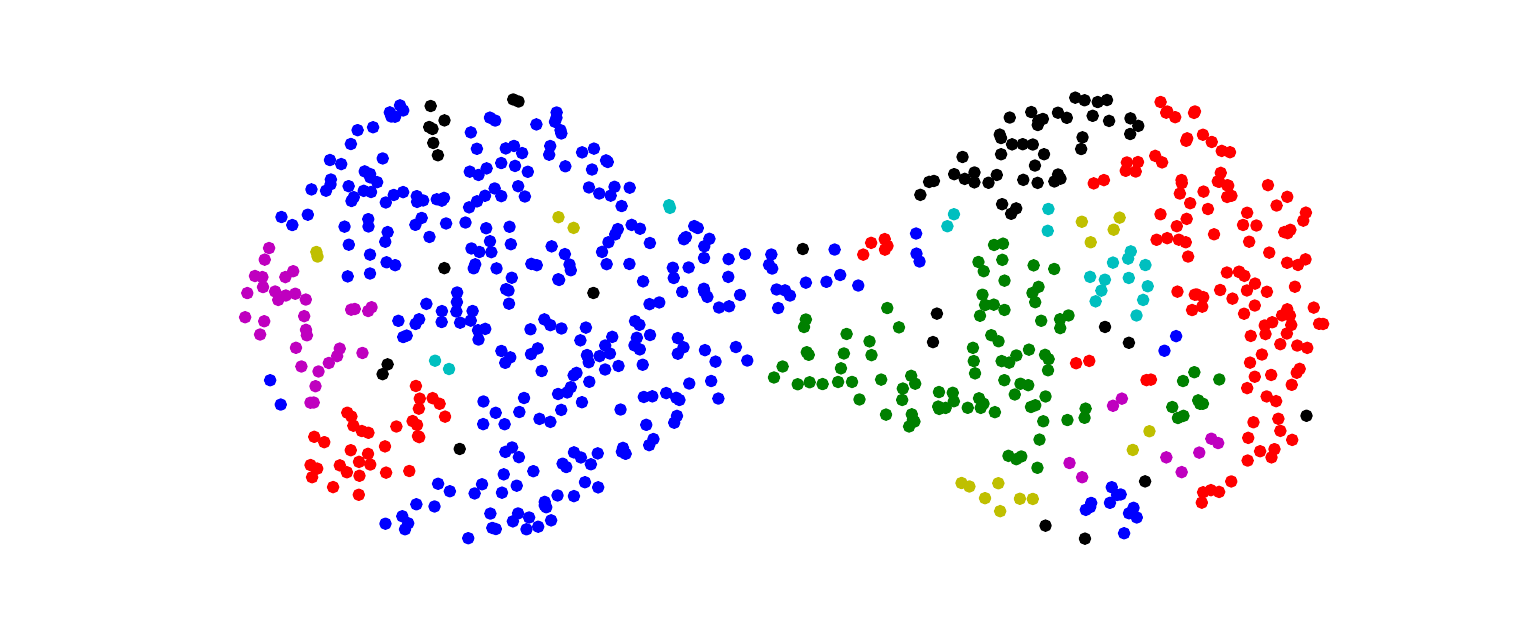}
    \caption{DBSCAN, \(\text{eps}=0.8\)}
\end{subfigure}
\hfill
\begin{subfigure}{0.65 \textwidth}
    \includegraphics[width=\textwidth]{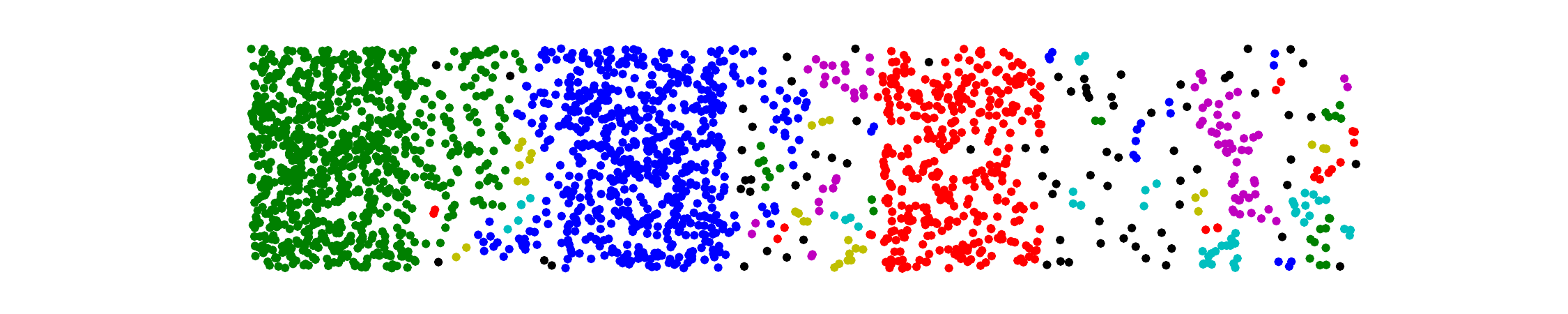}
    \caption{DBSCAN, \(\text{eps}=0.8\)}
\end{subfigure}

\caption{Two datasets and the corresponding clusters obtained via AWC and DBSCAN. The cluster structures obtained via AWC differ from those obtained at a certain level of a density cluster tree. In the left example, DBSCAN is not able to recover the cluster structure because the density is constant, whereas for the right example the density levels of the different clusters and the spaces between them vary too much.}
\label{fig_counterexamples_density_tree}
\end{figure}
 
\begin{algorithm}
\caption{Adaptive Weights Clustering (AWC)}
\label{algorithm}
\begin{algorithmic} [1]
\STATE\textbf{input}: data \( X_1, \dots, X_n\in\mathbb R^D \), a sequence of bandwidths \( 0<h_0<\dots < h_K \) and a threshold \( \lambda \in \mathbb R \) for the likelihood-ratio test
\STATE initialize the weights \( w_{ij}^{(0)} =  \mathds 1{( \|X_i - X_j\|\leq h_0)} \), \( 1\leq i, j \leq n \) 
\FOR{\( k \) from \( 1 \) to \( K \)}
\FOR{\( i\neq j \) s.t. \( \|X_i - X_j\| \leq h_k \)}
\STATE compute the empirical mass of the union \[N_{i\lor j}^{(k)} = \sum_{l \neq i, j}\mathds 1{(X_l \in \mathcal C_i^{(k-1)}\cup C_j^{(k-1)})}\]
where \( \mathcal C_i^{(k-1)}\defined \{X_j:w_{ij}^{(k-1)}=1\}  \). 
\STATE compute the estimation of the gap coefficient \[\widetilde\theta_{ij}^{(k)}  = \frac{\sum_{l \neq i, j}\mathds 1{(X_l \in \mathcal C_i^{(k-1)}\cap C_j^{(k-1)})}}{N_{i\lor j}^{(k)}}\]
\STATE compute the likelihood-ratio test statistic \[T_{ij}^{(k)} =N_{i\lor j}^{(k)} \mathcal K(\widetilde\theta_{ij}^{(k)}, q_{ij}^{(k)})\left({\mathds 1}({\widetilde\theta_{ij}^{(k)}< q_{ij}^{(k)}})-\mathds 1({\widetilde\theta_{ij}^{(k)}\geq q_{ij}^{(k)}})\right)\]
where \( \mathcal K(\alpha, \beta) =  \alpha\log\frac{\alpha}{\beta} + (1-\alpha)\log\frac{1-\alpha}{1-\beta} \) and 
\[q_{ij}^{(k)} =  \left(2\frac{\mathcal B\left(\frac{D+1}{2}, \frac{1}{2} \right)}{\mathcal B\left(1-\frac{\|X_i - X_j\|^2}{4h_{k-1}^2}, \frac{D+1}{2}, \frac{1}{2}\right)} - 1\right)^{-1}\text{} \]
with \( \mathcal B(\cdot, \cdot, \cdot) \) denoting the incomplete beta function and \( \mathcal B(\cdot, \cdot) =\mathcal B(1, \cdot, \cdot) \) denoting the usual beta function
\ENDFOR
\STATE  update the weights 
\[w_{ij}^{(k)} = \begin{cases}
                             \mathds 1(\|X_i - X_j\| \leq h_k)\mathds 1(T_{ij}^{(k)}\leq \lambda) &\text{for } 1\leq i\neq j \leq n\\
                             1 &\text{for }  1\leq i = j \leq n
                           \end{cases}\]
\ENDFOR
\STATE \textbf{output}: matrix of weights \( \left(w_{ij}^{(K)}\right)_{i, j=1}^n \) 
\end{algorithmic}
\end{algorithm}

Currently, there is a significant gap between practical and theoretical results on AWC. Experiments have shown the algorithm to deliver state-of-the-art performance on a wide range of artificial and real-life examples. Some artificial examples are shown in Figure \ref{fig_examples}. Theoretical results are fairly limited: First of all, they are limited to the case where no gaps have been detected in the previous step, as otherwise, the test of no gap does not necessarily coincide with a likelihood-ratio test. Finite sample guarantees on the propagation effect are only given at a local scale under the assumption of homogeneity due to the lack of results concerning the propagation at the boundaries of the clusters. A result about consistent separation is stated for the special case of i.i.d. data \( X_1, \dots, X_n \) from a piecewise constant density supported on three neighboring regions of equal cylindrical shape. A sufficient condition that allows consistency is that the density is smaller by a factor \( (1-\epsilon_n) \) on the middle cylinder than on the other two and that \( n\epsilon_n^2 (\log n)^{–1} \) is large enough. It turns out that this rate is optimal up to the logarithmic factor, more precisely it is impossible for any algorithm to achieve consistent separation if \( n\epsilon_n^2\nrightarrow \infty \). It has also been shown, that AWC adapts asymptotically to a linear submanifold structure of the data if the intrinsic dimension is known. However, specific conditions on the size of the considered deviation from the linear manifold are missing. Moreover, the procedure requires a crucial tuning parameter \( \lambda \). This parameter has to grow logarithmically in the data size \( n \) to ensure both propagation and separation. Unfortunately, these results do not indicate how to scale \( \lambda \), as no finite sample guarantee is given for the separation case.

In this work, we will significantly improve the current theory for AWC, and also solve some of the open problems mentioned above. First of all, we will consider distributions supported in the vicinity of closed non-linear submanifolds. We propose a slight adjustment of the algorithm in order to take into account the intrinsic dimension as well as local deviations due to the curvature of the manifold and the magnitude of the noise. In addition to generalizing the previous results to this setup, we will give finite sample guarantees both for propagation and separation and propose a theoretically justified choice for \( \lambda \) under rather general assumptions on the structure of the clusters. Moreover, we show that the propagation effect is still valid for points close to the boundary of a homogeneous cluster. This means that the propagation and separation results do no longer need to be stated separately, c.f. Corollary \ref{cor_prop_and_sep}.
The rest of the paper is organized as follows. In section \ref{section_theory} we present our main results. We start in subsection \ref{subsection_ineq_gap_coef} by introducing the manifold hypothesis and studying properties of the gap coefficient. This leads to the introduction of the so-called \emph{adjusted volume coefficient} and a minor modification of the algorithm which will preserve consistency under the manifold hypothesis. In subsection \ref{subsection_propagation} we discuss the case of uniform data without any clusters and continue in \ref{subsection_separation} by studying the sensitivity of the algorithm w.r.t. local gaps. We will show that the procedure is rate-optimal and discuss the problem of parameter tuning. Finally, we discuss the boundary case in subsection \ref{subsection_boundary_case}. In the following section \ref{section_experiments} we present numerical results illustrating the main results of section \ref{section_theory}. Proofs are collected in section \ref{section_proofs}.

\section{Theoretical results} \label{section_theory}
\subsection{Inequalities for the gap coefficient} \label{subsection_ineq_gap_coef}
When the dimension of the data is too large, the curse of dimensionality will cause the AWC procedure to fail. That is why we want to study the case where our data is locally lying approximately on a linear subspace. We start by studying the relationship between two central quantities of the algorithm. The first is the so-called \emph{gap coefficient}
\begin{equation}
q_{\mathbb P} \defined \frac{\int \mathds{1}_{B(M_1, r)\cap B(M_2, r)} d  \mathbb P}{\int \mathds{1}_{B(M_1, r)\cup B(M_2, r)} d \mathbb P}\text{,}\nonumber
\end{equation}
where \( \mathbb P \) is a probability measure on \( \mathbb R^D \) underlying our data, \( r>0 \) is a bandwidth parameter that increases subsequently by a factor \( b\in(1, 2) \) during the procedure and \( M_1 \) and \( M_2 \) are two points in \( \mathbb R^D \). We only need to compute it if \( \|M_1-M_2\| \leq br \). The purpose of this quotient is to measure whether there is a significant \emph{gap} in the data between \( M_1 \) and \( M_2 \), e.g. a region with a lower density, by comparing it to the \emph{volume coefficient}
\begin{equation}
 q \defined \frac{\int \mathds{1}_{B(M_1, r)\cap B(M_2, r)} d \mathbb \lambda}{\int \mathds{1}_{B(M_1, r)\cup B(M_2, r)} d \mathbb \lambda}\text{,}\nonumber
\end{equation}
with \( \lambda \) being the Lebegue measure. The volume coefficient in dimension \( D \) is a function of \( s\defined \frac{\|M_1 - M_2\|}{r} \) and is given by  \cite{AWC}
\begin{equation}
q = q_D(s) \defined \left(2\frac{\mathcal B\left(\frac{D+1}{2}, \frac{1}{2} \right)}{\mathcal B\left(1-\frac{s^2}{4}, \frac{D+1}{2}, \frac{1}{2}\right)} - 1\right)^{-1}\text{,}\label{def_vol_coef}
\end{equation}
where \( \mathcal B(\cdot, \cdot, \cdot) \) denotes the incomplete beta function and \( \mathcal B(\cdot, \cdot) = \mathcal B(1, \cdot, \cdot) \) denotes the beta function. As the dimension \( D \) increases, the volume coefficient decreases approximately exponentially in \( D \) as stated in the following Proposition. This demonstrates the curse of dimensionality, as we need at least an exponential growth in the data size w.r.t. the data dimension to guarantee a reasonable estimation of the gap coefficient, which is a necessity for the AWC algorithm.
\begin{prop}\label{lem_exp_growth_coefficient}
For \( 0< s < 2 \), we have
\[\frac{1}{2} \leq q_D(s) \left(\frac{ \left(1-\frac{s^2}{4}\right)^\frac{D+1}{2}}{\Gamma\left(\frac{1}{2}\right)\sqrt{d+1} }\right)^{-1} \leq  \frac{2^\frac{5}{2}}{s^2}.\]
\end{prop}
By considering locally homogeneous data lying close to a lower-dimensional submanifold of dimension \( d \), we show in the second Lemma that the gap coefficient essentially behaves locally as for homogeneous data on a linear subspace of the same dimension. We will use this in the following to prove theoretical guarantees for the AWC procedure. Let us start by listing all the assumptions on the distribution \( \mathbb P \) and the tuning parameters of the algorithm that we need - these are mainly a lower bound for the reach of the manifold on which the data is concentrated, an upper bound for the size of the additional noise in terms of the size of the considered vicinity and an upper bound for the radius of the considered vicinity in terms of the reach.
\begin{description}
\item[\label{assumptionsA} Assumptions A(\( r_0 \), \( r_1 \)):]
\end{description}     
                        \begin{itemize}
                           \item \( \mathbb P \) is the probability distribution of a random variable of the form \( X+\xi \), where \( X \) follows a density \(f\) on a manifold \( \mathcal M \) and \( \|\xi\| \leq r_\xi \) 
                           \item \( \mathcal M \) is a connected and compact d-dimensional \( C^2 \) submanifold of \( \mathbb{R}^D \) without boundary
                           \item \( \reach (\mathcal M) \geq \frac{1}{\kappa} \) for \( \kappa > 0 \) 
                           \item \( r_\xi \leq \frac{r_0}{\max\{20, 5d\}} \) 
                           \item \( r_1\leq\frac{1}{\max\{120,\sqrt{720d}\}\kappa} \) 
                           \item \( 1<b\leq\frac{b'}{(1+360\kappa^2 r_1^2)\left(1+5\frac{r_\xi}{r_0}\right)} \) for some \( b'<2 \) 
                          \end{itemize}
                    
Our assumption of bounded noise is identical to the one in the work of \cite{belakrishnan_cluster_tree_on_manifold} about the cluster density tree on manifolds and is relatively weak. It can be seen as a generalization of the so-called \emph{tubular noise} and \emph{additive noise}, c.f. \cite{different_noise_conditions_on_manifold}. Some authors additionally require orthogonality of the noise, c.f. \cite{homology_nsw_orthogonal_noise} and \cite{mfd_denoising4}. Moreover, note that the upper bound for \( b \) is not a very restrictive assumption, as it will always be satisfied for $1<b\leq\frac{3}{2}$. The complexity of the AWC algorithm with respect to \( b \) is \( \mathcal O\left(\frac{1}{\log b}\right) \), so as long as \( b \) is bounded away from \( 1 \), e.g. as long as \( b'\geq\sqrt{2} \), this does not change the overall complexity. 

\begin{prop}\label{lem_gap_coef}
Suppose assumptions \hyperref[assumptionsA]{A(\( r \), \( r \))} are satisfied for a constant density \(f\) and \( M_1 \), \( M_2 \) are two points in the support of \( \mathbb P \) whose distance is at most \( br \). Then
\begin{equation}
(1+\varepsilon_{\mathcal M})^{-1} (1+\varepsilon_\xi)^{-1} \leq \frac{q_{\mathbb P} }{q_d(s)} \leq (1+\varepsilon_{\mathcal M}) (1+\varepsilon_\xi)\nonumber
\end{equation}
for
\[\varepsilon_{\mathcal M} \defined \frac{9600  (d + 1) \kappa^2r^2 }{\left(1-\left(\frac{b'}{2}\right)^2\right)^\frac{d+1}{2}}\]
and
\[\varepsilon_\xi \defined  \frac{80 (d + 1) \frac{r_\xi}{r}}{\left(1-\left(\frac{b'}{2}\right)^2\right)^\frac{d+1}{2}}.\]
\label{prop_gap_coeff_uniform_case}
\end{prop}
\kom{
\begin{rmk}
If we replace \(\varepsilon_{\mathcal M}\) in the previous Lemma by the upper bound \[\varepsilon_{\mathcal M} \leq \frac{84  (d + 1) \kappa r }{\left(1-\left(\frac{b'}{2}\right)^2\right)^\frac{d+1}{2}} = \mathcal O(\kappa r),\]
we can replace the assumption \[ r_1\leq\frac{1}{\max\{120,\sqrt{720d}\}\kappa} \]
by the slightly weaker condition
 \[r \leq\frac{1}{\max\{48, 6d\}\kappa}. \]
\end{rmk}
}
Let us point out that our bound on the deviation of the gap coefficient from the volume coefficient is a product of the form \( \left(1+\mathcal O(\kappa^2 r^2)\right) \left(1 + \mathcal O\left(\frac{r_\xi}{r}\right)\right) \), as long as the intrinsic dimension \( d \) is bounded and as long as \( b' \) is bounded away from 2. The first factor takes into account the reach of the manifold, whereas the second factor only depends on the size of the noise. In particular, using a manifold denoising algorithm \cite{mfd_denoising1, mfd_denoising2, mfd_denoising3, mfd_denoising4}, we can preprocess our data in order to reduce noise and expect the second factor to be irrelevant. Thus, it might also be reasonable to study a setup without noise as in the following trivial Corollary.
\begin{cor}
Suppose \( r_\xi = 0 \) in addition to the assumptions of Proposition \ref{lem_gap_coef}. Then
\begin{equation}
(1+\varepsilon_{\mathcal M})^{-1} \leq \frac{q_{\mathbb P} }{q_d(s)} \leq 1+\varepsilon_{\mathcal M}.\nonumber
\end{equation}
\end{cor}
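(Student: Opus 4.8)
The plan is to obtain the statement directly by specializing Lemma~\ref{lem_gap_coef} to the noiseless case $r_\xi = 0$, so the whole argument is essentially bookkeeping. The first step is to confirm that the hypothesis set remains consistent once $r_\xi = 0$: in Assumptions~\hyperref[assumptionsA]{A($r$, $r$)} the constraint $r_\xi \le \frac{r_0}{\max\{20,5d\}}$ is trivially met, and the admissible range for $b$ collapses to $1 < b \le \frac{b'}{(1+3\kappa r)}$, which is still a nonempty interval because $r \le r_1 \le \frac{1}{\max\{48,6d\}\kappa}$ keeps $1+3\kappa r$ close to $1$ and $b'$ may be taken $<2$ accordingly. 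Hence ``the assumptions of Lemma~\ref{lem_gap_coef}'' together with $r_\xi = 0$ are a legitimate, non-vacuous hypothesis.

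The second step is the actual computation, which is immediate: in the definition
\[
\varepsilon_\xi = \frac{80(d+1)\frac{r_\xi}{r}}{\left(1-\left(\frac{b'}{2}\right)^2\right)^{\frac{d+1}{2}}}
\]
setting $r_\xi = 0$ gives $\varepsilon_\xi = 0$, so $1+\varepsilon_\xi = 1$ and $(1+\varepsilon_\xi)^{-1} = 1$. Plugging this into the two-sided inequality of Lemma~\ref{lem_gap_coef},
\[
(1+\varepsilon_{\mathcal M})^{-1}(1+\varepsilon_\xi)^{-1} \le \frac{q_{\mathbb P}}{q_d(s)} \le (1+\varepsilon_{\mathcal M})(1+\varepsilon_\xi),
\]
the $\varepsilon_\xi$ factors drop out and one is left precisely with $(1+\varepsilon_{\mathcal M})^{-1} \le \frac{q_{\mathbb P}}{q_d(s)} \le 1+\varepsilon_{\mathcal M}$, as claimed.

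I do not expect any genuine obstacle here; the only thing worth a sentence is the remark above about the $b$-range, to make clear the corollary is not vacuous. If one instead wanted a self-contained proof not routed through the noisy lemma, the alternative would be to rerun the estimate of Lemma~\ref{lem_gap_coef} in the clean setting where $\mathrm{supp}(\mathbb P) = \mathcal M$ exactly: use the local Lipschitz parametrization of $\mathcal M$ over its tangent plane (Lemma~\ref{lem_local_lipschitz_param}) to push the uniform measure on $\mathcal M$ to (an approximately uniform measure on) a $d$-dimensional flat, control the Jacobian distortion via the $L^d$ volume bound (Lemma~\ref{lem_volume_lipschitz}), and compare the resulting intersection/union volumes with the flat coefficient $q_d(s)$ from \eqref{def_vol_coef}; the curvature-only error then reproduces $\varepsilon_{\mathcal M}$. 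But given Lemma~\ref{lem_gap_coef} this detour is unnecessary, and the one-line specialization suffices.
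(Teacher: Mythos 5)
Your proof is correct and matches the paper's (implicit) argument: the corollary is stated as a trivial specialization of Lemma~\ref{lem_gap_coef}, and setting $r_\xi = 0$ so that $\varepsilon_\xi = 0$ is exactly the intended one-line deduction. The extra remark checking that the assumptions remain non-vacuous for $r_\xi = 0$ is fine but not needed.
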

Recall that the main idea of the AWC algorithm is to distinguish a homogeneous area from a gap between two clusters by estimating and comparing the gap coefficient with the volume coefficient. However, due to the non-linear manifold structure as well as the noise, we cannot establish a strict inequality between the two quantities even for the uniform case. Nevertheless, Proposition \ref{lem_gap_coef} guarantees a strict inequality for the homogeneous case if we adjust the volume coefficient by a factor \( (1+\varepsilon_{\mathcal M})^{-1} (1+\varepsilon_{\xi})^{-1}  \). Consequently, we will adjust the proposed test of the AWC procedure to
 \[T_{ij}^{(k)} \defined  N_{i\lor j}^{(k)}\mathcal K\left(\widetilde\theta_{ij}^{(k)}, \mathfrak q_{ij}^{(k)}\right)\left\{\mathds 1\left({\widetilde\theta_{ij}^{(k)}< \mathfrak q_{ij}^{(k)}}\right)-\mathds 1\left({\widetilde\theta_{ij}^{(k)}\geq \mathfrak q_{ij}^{(k)}}\right)\right\}\]
by considering an \emph{adjusted volume coefficient} \[\mathfrak q_{ij}^{(k)}\defined (1+\varepsilon_{\mathcal M})^{-1} (1+\varepsilon_{\xi})^{-1} q_d\left(\frac{\|X_i-X_j\|}{h_{k-1}}\right).\]
Note that in practice, the parameters \( d \), \( \frac{1}{\kappa} \) and \( r_\xi \) are unknown. We refer to \cite{dimension_estimation} for an overview of procedures dedicated to estimating the intrinsic dimension. The estimation of the noise is related to the estimation of the manifold and is particularly related to the problem of recovering the projections of the data onto the manifold, see \cite{mfd_denoising4}. The estimation of the reach has been studied in \cite{reach_estimation}. However, the effect of the reach is locally small and can be ignored. Similarly, using a manifold denoising algorithm, we can assume the effect of the noise to be insignificant. In contrast, the effective dimension parameter is crucial for the computation of the test statistic. Following the proofs of theorems \ref{thm_propagation} and \ref{thm_separation}, we see that the AWC procedure is still consistent in case of overestimation of \(d\) as long as the gap is significant enough. However, we cannot expect the algorithm to be rate optimal in this case. In subsection \ref{subsection_experiment_intrinsic_dimension_effect} we discuss a simple numerical example, that suggests that the procedure might be stable in practice w.r.t. to over- and underestimation of \(d\).

\subsection{Propagation in the uniform case} \label{subsection_propagation}
 In the following, we generalize the results from \cite{AWC} to our considered setup. As expected, the adjusted AWC algorithm consistently propagates homogeneous areas of our data: If the threshold \( \lambda \) of our likelihood-ratio test is of the form \( C \log n \), then the accuracy in estimating the weights of the adjacency matrix is of order \( 1-\mathcal O \left(n^{-(C-3)}\right) \).

\begin{thm}\label{thm_propagation}
With high probability, the AWC algorithm does not detect a gap between two points from a distribution that is nearly uniform on a manifold, as long as it did not detect any gaps in the previous step. To be precise, suppose assumptions \hyperref[assumptionsA]{A(\( h_{k-1} \), \( h_{k-1} \))} hold and \( X_1, X_2, \dots, X_{n} \overset{i.i.d.} {\sim}\mathbb P \). We consider a constant density \(f\) and assume that the AWC algorithm did not detect any gaps in the previous step. If we choose the threshold \( \lambda = C \log n \) for some \( C>0 \), then \[\mathbb P^{\otimes n} \left(T_{ij}^{(k)}> C\log n \middle\vert \|X_i - X_j\| \leq h_{k}\right) \leq 2n^{–C}.\]
\end{thm}

\begin{cor}\label{cor_propagation}
With high probability, the AWC algorithm does not detect any gaps if our data distribution is close to a uniform distribution on a submanifold of \(\mathbb R^D\). To be precise, suppose assumptions \hyperref[assumptionsA]{A(\( h_{0} \), \( h_{K} \))} hold and \( X_1, X_2, \dots, X_{n} \overset{i.i.d.} {\sim}\mathbb P \). We consider a constant density \(f\). If \( K<n \) and we choose the threshold \( \lambda = C \log n \) for \( C>3 \), then
\[\mathbb P^{\otimes n} \left( w_{ij}^{(K)} = \mathbbm 1(\|X_i-X_j\|\leq h_K) \forall i, j\right)  \geq 1 - 2n^{-(C-3)}.\]
\end{cor}

\begin{rmk}
By symmetricity, a linear density also satisfies the no gap condition in the full dimensional case \(d = D\). So up to the constants in the terms \(\varepsilon_{\mathcal M}\) and \(\varepsilon_\xi\), Proposition \ref{prop_gap_coeff_uniform_case} is still valid if the underlying density is of the form \(f=\mathbbm 1(\mathcal M) \mathfrak f\) for a linear function \(\mathfrak f\) on \(\mathbb R^D\). Consequently, the above results on propagation in the uniform case can also be generalized to this linear model.
\end{rmk}

\subsection{Separation in the gap case} \label{subsection_separation}
For the case of a significant gap in the data, we can also generalize the results of \cite{AWC} to the manifold setup and show that we consistently separate the data achieving nearly rate-optimality. In addition, we give a finite sample guarantee. Together with the previous results for the homogeneous case, this yields a first theoretically justified proposal to choose the parameter \( \lambda \). Moreover, we do not only generalize from a linear to a smooth subspace structure of our data but also significantly generalize the definition of the considered clusters.
\begin{description}
\item[\label{assumptionsB} Assumptions B(\( r \)):]
\end{description}     
\begin{itemize}
 \item First of all, we include assumptions \hyperref[assumptionsA]{A(\( r \), \( r \))} 
 \item Additionally, we consider disjoint subsets \( \mathcal C_1, \dots, \mathcal C_{k_{\mathcal C}} \) of \( \mathcal M \) 
  \item Spatial separation of clusters is ensured by\[d_\infty(\mathcal C_{l}, \mathcal C_{m}) \defined \min_{x \in\mathcal C_{l}, y \in \mathcal C_{m}} \|x-y\| \geq r+2 r_\xi \quad \text{   for   }\quad  1\leq l\neq m \leq k_{\mathcal C}\]
 \item  Similarly as in \cite{rigollet}, we assume a thickness condition on each cluster: We assume there is a constant \( f_0>0 \) s.t. for any \( x\in \mathcal C_l \) and \( r'\in [r-2r_\xi, r+2r_\xi] \) we have
 \[\int f \mathds 1_{B(x, r')}\geq f_0 \int \mathds 1_{B(x, r') \cap \mathcal M} \]
 \item Separation of clusters is also ensured by a significant depth of the gap: For \(x_1\in \mathcal C_l, x_2\in \mathcal C_m\), \(r'\in [r-2r_\xi, r+2r_\xi] \) with \(l\neq m\) and $\|x_1-x_2\| \leq br$ we have
 \[\int f \mathds 1_{B(x_1, r')\cap B(x_2, r')} \leq (1-\epsilon)f_0 \int \mathds 1_{B(x_1, r')\cap B(x_2, r') \cap \mathcal M} \]
 \item The sample size \( n \) has to be large enough, i.e. for some \(\beta>0\) we have \[\frac{n}{\log n} \geq \frac{2\beta}{ z_k^2}\] where \( f_0^{-1}z_k \) denotes the volume of a \(d\)-dimensional ball of radius \(\frac{7}{8} r\)
 \item The depth \( \epsilon < 1 \) of must be significant w.r.t. the effect of curvature and noise, and decreases not faster than \( (\log n)^{\frac{1}{2}}n^{-\frac{1}{2}} \), i.e. it satisfies the lower bound
\[\epsilon \geq \max\left\{7(\varepsilon_{\mathcal M} +\varepsilon_\xi +  \varepsilon_{\mathcal M} \varepsilon_\xi ), \sqrt{\frac{2\alpha \log n}{z_k q_d^{2}(b) n}}\right\} \text{}\]
for some \( \alpha > \beta \). 
\end{itemize}
The integral conditions are up to a change of constants a generalization of the simpler separation condition
\[\esssup_{\mathcal M \setminus \cup_i \mathcal C_i} f \leq (1-\epsilon)\inf_{\cup_i \mathcal C_i} f \]
from \cite{separation_condition_clusters_for_tree}. However, the here introduced generalization allows for both smooth  \(f\) as well as a step function. Moreover, the upper bound on the size of the bounded noise \(\epsilon \gtrsim \frac{r_\xi}{r} d\) also appears in the work of \cite{belakrishnan_cluster_tree_on_manifold} (with parameters \((\theta, \sigma)\) instead of \((r_\xi, r)\)).

The assumptions above are designed to be comparable to the framework of other density-based methods. However, AWC does not reconstruct connected superlevel sets of the unterlying density. Conversely, other procedures will in general not find a cluster structure respecting the idea of significant gaps. Moreoever, theoretical guarantees for AWC are only given for local clusters. In general, it is difficult to assign a global partition of the data from this information, as the local clusters might form connected components that are heavily overlapping. This limits the comparability of the presented results to a local level.
\begin{thm}\label{thm_separation}
We consider a distribution on the vicinity of a submanifold of $\mathbb R^D$ containing different clusters separated by significant gaps in the density. As long as the AWC algorithm did not detect gaps in the previous step, it will detect the gap between two points from different clusters with high probability. To be precise, consider the assumptions \hyperref[assumptionsB]{B(\( h_{k-1}\))} and \( X_1, X_2, \dots, X_{n+2} \overset{i.i.d.} {\sim} \mathbb P \). Suppose that the algorithm did not detect any gaps in the previous steps. Then \[\mathbb P^{\otimes (n+2)}\left(T_{ij}^{(k)}\geq \left(\sqrt{\alpha}-\sqrt{\beta}\right)^2 \log n\Big\vert \substack{ \|X_i-X_j\|\leq h_k\\ \exists l\neq m: X_i\in\mathcal C_l^{r_\xi}, X_j\in\mathcal C_m^{r_\xi}}\right) \geq 1-3n^{-\beta}.\]
\end{thm}

\begin{rmk}
Under the previous assumptions, the gap will be consistently detected at the step \( k \) where the considered vicinity first exceeds the width of the gap. However, as in the homogeneous case, the speed of convergence depends on the choice of the tuning parameter \( \lambda \). Theorems \ref{thm_propagation} and \ref{thm_separation} suggest choosing a threshold of the form \( \lambda = C \log n \). Moreover, the optimal constant \( C^* \) that yields the fastest convergence \( w_{ij}^{(k)}\xrightarrow{}w_{ij} \) in probability for both discussed cases according to the given lower bounds for the accuracy of the estimation of the weights is given by
\begin{align*}
 C^{*} &= \sup\limits_{\beta\in(0, \alpha)}\min\left\{\left(\sqrt{\alpha}-\sqrt{\beta}\right)^2, \beta\right\}\\
 &=\frac{\alpha}{4}.
\end{align*}
The corresponding rate of misclassification is for both cases 
\[\mathbb P^{\otimes n} \left(w_{ij}^{(k)} \neq w_{ij}\right) \leq \mathcal O(n^{-\frac{\alpha}{4}}).\]
\end{rmk}

\begin{rmk}
We consider a low manifold dimension \(d\) as a reasonable assumption and thus consider only asymptotics in \(n\) while \(d\) is bounded from above. While the rate of the algorithm is essentially (i.e. up the involved constants) independent of \(d\), we have the following dependencies on \(d\):
\begin{itemize}
 \item To guarantee a fixed level of uncertainty, i.e. with fixed \(\beta\), the lower bound on the sample size \(n\) in the list of assumptions increases exponentially in \(d\), demonstrating the curse of dimensionality if the manifold dimension is very large.
 \item For larger \(d\) we allow a smaller level of noise \(\propto d^{-1}\) and a smaller size of the considered vicinity \(\propto d^{-\frac{1}{2}}\).
\end{itemize}
\end{rmk}

\subsection{Boundary case}\label{subsection_boundary_case}
In the previous subsection \ref{subsection_propagation} we considered a homogeneous distribution on the manifold. In the presence of non-trivial clusters, this assumption can only be satisfied locally and only for points far enough from the boundaries of the clusters. However, the no gap condition enjoys the remarkable property that is still valid for points close to a locally linear boundary. In fact, the corresponding gap coefficient might only be larger than in the homogeneous case.
\begin{lem}\label{lem_half_space_coeff}
We assume $M_1\neq M_2\in\mathbb R^D$ and $r_1, r_2 >0$. Moreover, suppose that $\mathcal H$ is a $D$-dimensional half-space containing $M_1$ and $M_2$. Then
\[\frac{\lambda( B(M_1, r) \cap B(M_2, r_2))}{\lambda (B(M_1, r) \cup B(M_2, r_2))} \leq \frac{\lambda(\mathcal H\cap B(M_1, r) \cap B(M_2, r_2))}{\lambda(\mathcal H \cap (B(M_1, r) \cup B(M_2, r_2)))} \]
\end{lem}
The proof of Lemma \ref{lem_half_space_coeff} relies on the following result via Fubini's theorem. Again, we assume \(D>0\) and denote the \(D\)-dimensional Lebesgue measure by \(\lambda\).
\begin{lem}\label{lem_plane_monotonicity}
Suppose \(M_1\neq M_2\in\mathbb R^{D+1}\) and \(r >0\). We consider a hyperplane \(H\subset \mathbb R^{D+1}\) containing \(\frac{M_1+M_2}{2}\). Suppose \(v\) is vector of norm 1 that is orthogonal to \(H\). Moreover, we define \(t_\text{max}\defined \sup \{t : (H + tv) \cap (B(M_1, r) \cup B(M_2, r))\neq \{\}\}\). Then the function \(\mathcal Q: \mathopen{[}0, t_\text{max}\mathclose{)} \to \mathbb R_{\geq 0}\),
\begin{align*}
 \mathcal Q(t)\defined  \frac{\lambda((H + tv)\cap B(M_1, r) \cap B(M_2, r))}{\lambda((H + tv) \cap (B(M_1, r) \cup B(M_2, r)))}
\end{align*}
is monotonely decreasing in \(t\).
\end{lem}
The quantity \(\mathcal Q\) in the result above is a generalization of the volume coefficient in a lower dimension: The intersection of the considered hyperplane with each ball is again a ball of a lower dimension - however, the corresponding radii are in general not identical.

Lemma \ref{lem_plane_monotonicity} shows in fact more than what is claimed in Lemma \ref{lem_half_space_coeff}: As we move the two center points closer to the linear boundary, the volume coefficient starts increasing monotonely as soon as the two balls are not completely contained by the half-space anymore. At some point, the volume coefficient attains its maximum, after which it decreases monotonely. By symmetricity, the volume coefficient has the same value again as in the homogeneous case, when the boundary of the half-space contains \(\frac{M_1+M_2}{2}\). If we consider a stepfunction
\begin{equation}
f \propto \mathbbm 1(\mathcal H \cap (B(M_1, r)\cup B(M_2, r)))+ (1-\epsilon) \mathbbm 1(\mathcal H^C \cap (B(M_1, r)\cup B(M_2, r))) \label{eq_density_half_space_sketch}
\end{equation}
as a generalization of the uniform density considered in Lemma \ref{lem_half_space_coeff}, we observe the analogue monotonicity, if we move the two center points further away from the half-space \(\mathcal H\), c.f. Figure \ref{fig_half_space_visualization}.
\begin{figure}[t]
\begin{center}
\begin{subfigure}{0.6 \textwidth}
    \includegraphics[width=\textwidth]{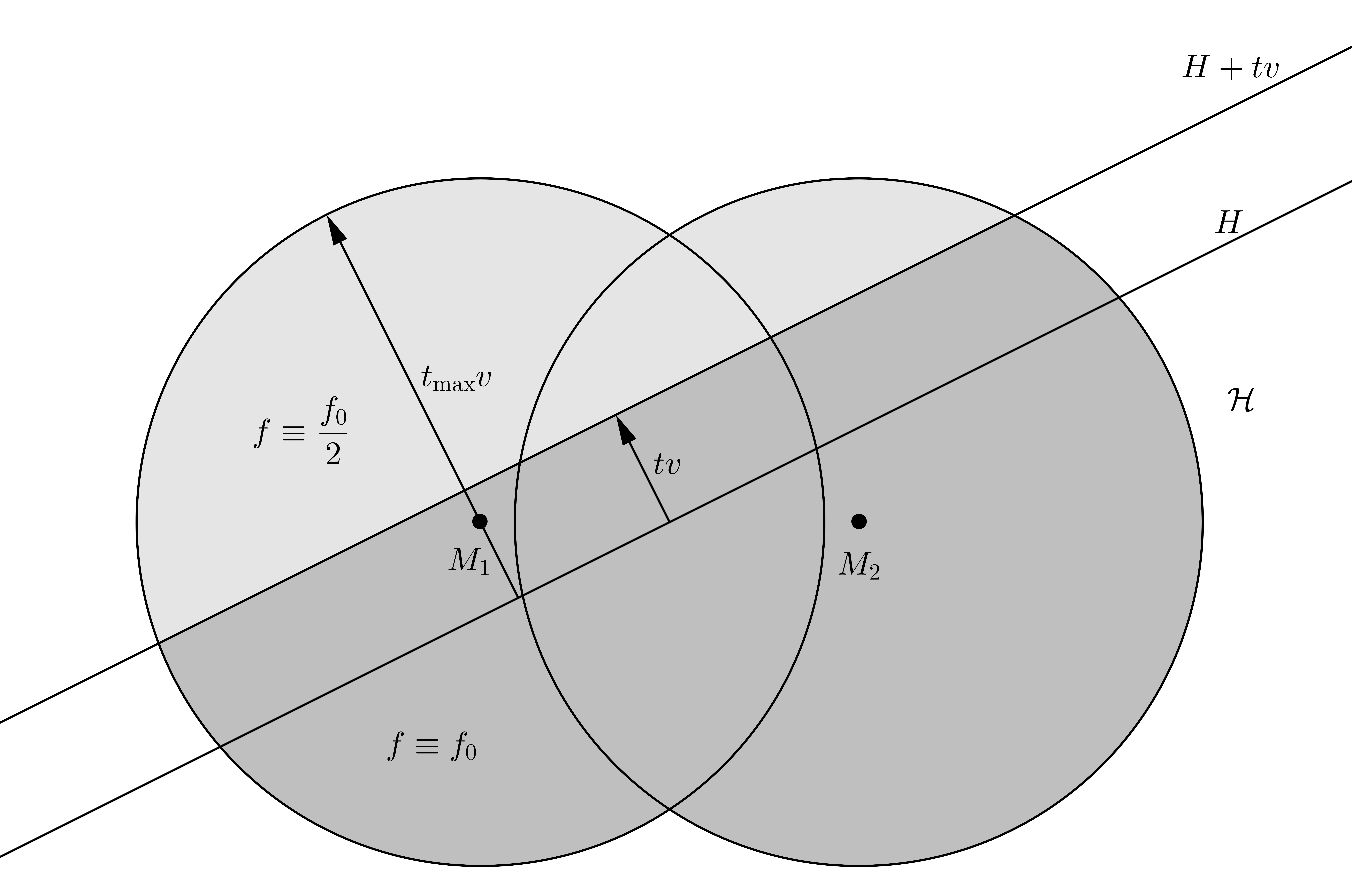}
\end{subfigure}\\

\begin{subfigure}{0.3\textwidth}
    \includegraphics[width=\textwidth]{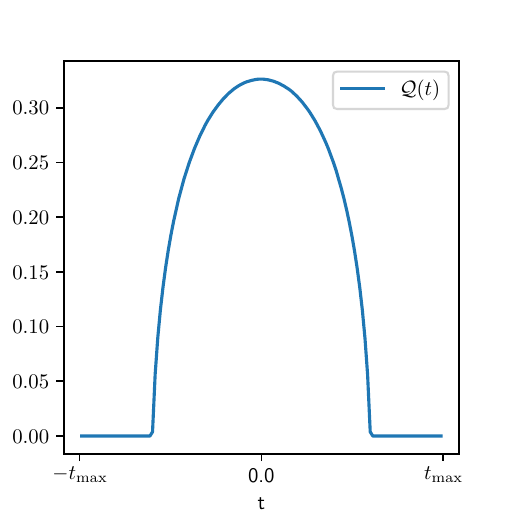}
\end{subfigure}
\begin{subfigure}{0.3 \textwidth}
    \includegraphics[width=\textwidth]{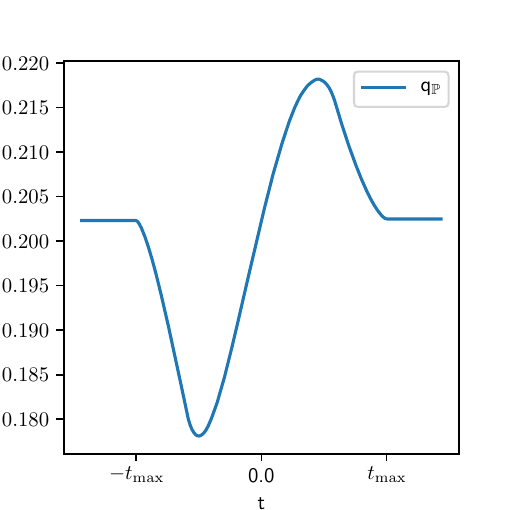}
\end{subfigure}

\end{center}
\caption{The top sketch illustrates the notation and relation between Lemma \ref{lem_half_space_coeff} and \ref{lem_plane_monotonicity}: The half-plane \(H+tv\) is the boundary of the half-space \(\mathcal H\). However, the uniform assumption of Lemma \ref{lem_half_space_coeff} is modified to a piecewise constant density as described in \eqref{eq_density_half_space_sketch} with \(\epsilon = \frac{1}{2}\). At the bottom, we see a plot of the corresponding function \(\mathcal Q(t)\) from Lemma \ref{lem_plane_monotonicity} (left) as well as the gap coefficient \({q_\mathbb P}$ (right). These values were obtained by Monte Carlo integration.}
\label{fig_half_space_visualization}
\end{figure}

Lemma \ref{lem_half_space_coeff} allows to extend the lower bound of Proposition \ref{lem_gap_coef} to the boundary case under an almost identical set of assumptions with an additional cluster structure.
\begin{description}
\item[\label{assumptionsD} Assumptions C(\(r\)):]
\end{description}     
                        \begin{itemize}
                           \item First of all, we consider assumptions \hyperref[assumptionsA]{\(A(r,r)\)}
                           \item Additionally, we consider disjoint clusters \(\mathcal C_1, \dots, \mathcal C_{k_\mathcal C}\) of \(d_\infty\)-distance at least \(r+2r_\xi\) as submanifolds of \(\mathcal M\) with boundaries \(\partial \mathcal C_i\) of reach at least \(\frac{1}{\kappa'}\)
                           \item The density \(f\) on \(\mathcal M\) is constant with value \(c_0\) on \(\cup \mathcal C_i\) and satisfies
                            \[\esssup_{\mathcal M \setminus\ \cup \mathcal C_i} f \leq  c_0\]
                           \item Outside of the clusters we require the following regularity condition for the density: Any nontrivial intersection of a superlevel set of \(f\) with an offset \(\mathcal C_i^{r+2r_\xi}\) is equal to the intersection of that superlevel set with a submanifold of \(\mathcal M\) having a boundary of reach at least \(\frac{1}{\kappa'}\)
                           \item \(r \leq  \frac{1}{132\kappa' \sqrt{d+1}} \)
                          \end{itemize}
The last condition together with the upper bound from \hyperref[assumptionsA]{\(A(r,r)\)} ensures that both the reach of \(\mathcal M\) and \(\partial \mathcal C_i\) are large enough w.r.t. the radius \(r\), such that both the manifold and the boundary of the cluster can be locally approximated by affine subspaces.
\begin{prop}\label{lem_gap_coeff_boundary}
 We consider assumptions \hyperref[assumptionsD]{\(C(r)\)}. Suppose \(M_1, M_2 \in \mathcal C_i\) are points of distance at most \(br\). Then 
 \begin{align*}
   q_{\mathbb P} \geq q_d\left(s\right)  \left(1+\epsilon_{\mathcal M}\right)^{-1}\left(1+\epsilon_{\mathcal \xi}\right)^{-1} \left(1+\epsilon_{\partial \mathcal C}\right)^{-1},
 \end{align*}
for 
\begin{align*}
\epsilon_{\mathcal M} &=  \frac{45360 (d+1) \kappa^2 r^2}{\left(1-\frac{b'^2}{4}\right)^\frac{d+1}{2}} \\
\epsilon_{\xi} &= \frac{264 (d+1) \frac{r_\xi}{r}}{\left(1-\frac{b'^2}{4}\right)^\frac{d+1}{2}}  \\
\epsilon_{\partial \mathcal C} &=132  \kappa' r \sqrt{d+1} 
\end{align*}
\end{prop}
This inequality is stronger than the lower bound from Proposition \ref{lem_gap_coef}. Hence, we have to modify the definition of the \emph{adjusted volume coefficient}. For the following, we consider \[\mathfrak q_{ij}^{(k)}\defined (1+\epsilon_{\mathcal M})^{-1} (1+\epsilon_{\xi})^{-1}  \left(1+\epsilon_{\partial \mathcal C}\right)^{-1} q_d\left(\frac{\|X_i-X_j\|}{h_{k-1}}\right)\]
to allow for consistent propagation in the boundary case as stated in the following Theorem. Again, in practice, the implementation of the adjusted volume coefficient might be ignored, c.f. \cite{AWC}. However, it is important to not underestimate the dimension parameter $d$. In fact, an overestimation of $d$ might compensate for dropping the first three factors of the adjusted volume coefficient and ensure the propagation of homogeneous areas.
\begin{thm}\label{thm_propagation_boundary}
We consider a distribution in the vicinity of a manifold and two points inside a homogenous cluster. Then with high probability, the AWC algorithm will not detect a gap between them, even if the points happen to be in close proximity to the boundary of the cluster. To be precise, suppose assumptions \hyperref[assumptionsD]{C(\( h_{k-1} \))} hold and \( X_1, X_2, \dots, X_{n+2} \overset{i.i.d.} {\sim}\mathbb P \). We assume that the AWC algorithm did not detect any gaps in the previous step. If we choose the threshold \( \lambda = C \log n \) for some \( C>0 \), then \[\mathbb P^{\otimes (n+2)} \left(T_{ij}^{(k)}> C\log n \middle\vert X_i, X_j\in \mathcal C^{r_\xi},  \|X_i - X_j\| \leq h_{k}\right) \leq 2n^{–C}.\]
\end{thm}
Together with Theorem \ref{thm_separation} we are able to cover all the discussed cases at once. In the following corollary, we will use the term \emph{global clusters} to describe the disjoint offsets \(\mathcal C_i^{r_\xi}\).
\begin{cor}\label{cor_prop_and_sep}
We consider the conditions \hyperref[assumptionsD]{\(C(h_{k-1})\)} and \hyperref[assumptionsB]{\(B(h_{k-1})\)} with a slightly stricter lower bound \[\varepsilon\geq 7\left(1+\epsilon_{\mathcal M}\right)\left(1+\epsilon_{\mathcal \xi}\right) \left(1+\epsilon_{\partial \mathcal C}\right) - 7.\]
 Suppose \( X_1, \dots, X_{n+2} \overset{i.i.d.} {\sim} \mathbb P\). We assume that the AWC algorithm did not detect any gaps in the previous step. Moreover, we choose the threshold \(\lambda = \frac{\alpha}{4}\log n\). Then with probability at least \(1-3n^{-\frac{\alpha-8}{4}}\), every local cluster \(\mathcal C_i^{(k)}\) calculated by AWC at step \(k\) satisfies the following: If \(X_i\) belongs to a global cluster, \(\mathcal C_i^{(k)}\) contains all points from this cluster of distance at most \(h_k\) to \(X_i\), while it does not contain any points from other global clusters.
\end{cor}

\subsection{Optimality}
The lack of a rigorous global cluster objective makes it difficult to compare our theoretical results to previous work. Moreover, we have shown that the algorithm differs significantly from other density-based methods, c.f. Figure \ref{fig_counterexamples_density_tree}. However, the local separation considered in Theorem \ref{thm_separation} as well as Corollary \ref{cor_prop_and_sep} is very similar to the split of two components in the cluster density tree. Consistent and rate-optimal estimation of the cluster density tree using a single-linkage clustering algorithm has been established in\cite{separation_condition_clusters_for_tree}. Using different notation (i.e. \(\sigma\) instead of \(r\) as width of the gap and \(\lambda\) instead of \(f_0\) as density level), the authors show that the optimal rate is (up to logarithmic factors and factors dependent on \(d\)) given by
\[\epsilon \gtrsim \sqrt{\frac{1}{n  r^d f_0}}\]
In \cite{belakrishnan_cluster_tree_on_manifold} this has been extended to the manifold setup. Further work by \cite{cluster_tree_optimal_rate_hoelder} shows that, under the assumption of a Hölder smooth density, this rate can be described by only one separation parameter together with the smoothness parameter. 

In view of \(z_k \propto f_0 r^d\), our lower bound on the depth of the gap 
\[\epsilon \geq \sqrt{\frac{2 \alpha \log n}{z_k q_d^2(b) n}}\]
achieves in fact the optimal rate given above w.r.t. \((n, r, f_0)\). We verify the optimality w.r.t. n for our setup under very simple conditions, showing that no algorithm can consistently detect the gap if \( \epsilon \) decreases at the rate \( n^{-\frac{1}{2}} \). 
\begin{description}
\item[\label{assumptionsC} Assumptions D:]
\end{description}     
\begin{itemize}
 \item \( \mathcal C_1, \dots, \mathcal C_k \) are disjoint subsets of a manifold \( \mathcal M\subset \mathbb R^D \) 
 \item \( X_1, \dots, X_n \) are drawn i.i.d. from a density supported on \( \mathcal M \) that is constant on \( V \defined \cup \mathcal C_i  \) with value \( f_V \) and constant on \( G \defined \mathcal M \setminus V \) with value \( f_G \) 
\end{itemize}

\begin{thm}\label{thm_rate}
Let assumptions \hyperref[assumptionsC]{D} be satisfied. We consider the null hypothesis of a uniform distribution on the manifold, i.e.  \[H_0: f_G=f_V\]
against the alternative \[H_1: f_G=(1-\delta) f_V\]
 for \( \delta > 0 \). Then no test can separate the two cases consistently if \( n\delta^2\nrightarrow\infty \) as \( n\to\infty \).
\end{thm}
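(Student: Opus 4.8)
The plan is to run the classical Le~Cam two--point argument. Writing $\mathbb P_0$ for the distribution of a single observation under $H_0$ (uniform on $\mathcal M$) and $\mathbb P_1$ for the one under $H_1$, recall that every test $\phi_n$ based on $X_1,\dots,X_n$ satisfies
\[
\mathbb P_0^{\otimes n}(\phi_n=1)+\mathbb P_1^{\otimes n}(\phi_n=0)\ \ge\ 1-\mathrm{TV}\bigl(\mathbb P_0^{\otimes n},\mathbb P_1^{\otimes n}\bigr).
\]
Hence it suffices to show that when $n\delta^2\nrightarrow\infty$ the total variation distance on the right stays bounded away from $1$ along some subsequence: then no sequence of tests can force both error probabilities to $0$, which is exactly the claimed impossibility. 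I would control this distance through the Hellinger affinity, which tensorises cleanly, rather than through $\mathrm{TV}$ directly.

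Second, I would compute the affinity $\rho\defined\int\sqrt{f_0 f_1}\,d\mu$ explicitly, where $\mu$ is the $d$--dimensional volume measure on $\mathcal M$. Put $\gamma\defined\mathrm{vol}(G)/\mathrm{vol}(\mathcal M)$; the degenerate cases $\gamma\in\{0,1\}$ are trivial since then $H_0$ and $H_1$ coincide after renormalisation, so assume $\gamma\in(0,1)$. Then $f_0\equiv\mathrm{vol}(\mathcal M)^{-1}$, normalisation forces $f_V=\mathrm{vol}(\mathcal M)^{-1}(1-\gamma\delta)^{-1}$ and $f_G=(1-\delta)f_V$, and integrating separately over $V$ and $G$ gives
\[
\rho\ =\ \frac{(1-\gamma)+\gamma\sqrt{1-\delta}}{\sqrt{1-\gamma\delta}}.
\]
Note that the manifold geometry enters only through the single scalar $\gamma$, so nothing here really uses the submanifold structure beyond the value of $\mathrm{vol}(G)/\mathrm{vol}(\mathcal M)$.

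Third, by Fubini $\rho(\mathbb P_0^{\otimes n},\mathbb P_1^{\otimes n})=\rho^{\,n}$, and a one--line Taylor expansion of the displayed formula gives $\rho=1-\tfrac18\gamma(1-\gamma)\delta^2+O(\delta^3)$ as $\delta\to0$, so $\rho\ge 1-c\,\delta^2$, and in particular $\rho\ge\tfrac12$, for a constant $c=c(\gamma)>0$ and all sufficiently small $\delta$. Combining this with the Cauchy--Schwarz bound $\rho(P,Q)^2\le(1-\mathrm{TV})(1+\mathrm{TV})$, i.e.\ $\mathrm{TV}(P,Q)\le\sqrt{1-\rho(P,Q)^2}$, and with $-\log\rho\le(1-\rho)/\rho\le 2c\delta^2$, one obtains
\[
\mathrm{TV}\bigl(\mathbb P_0^{\otimes n},\mathbb P_1^{\otimes n}\bigr)\ \le\ \sqrt{1-\rho^{\,2n}}\ \le\ \sqrt{1-e^{-4c\,n\delta^2}}.
\]
If $n\delta^2\nrightarrow\infty$, pick a subsequence along which $n_k\delta_{n_k}^2\le M<\infty$; along it $\delta_{n_k}\to0$, the estimate applies, and $\mathrm{TV}\le\sqrt{1-e^{-4cM}}<1$, contradicting consistency.

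The mathematics is light — one explicit integral and two elementary inequalities — so the only part needing care is the bookkeeping: handling $\delta=\delta_n$ that may vary with $n$ via the subsequence extraction, disposing of the degenerate volume ratios $\gamma\in\{0,1\}$ separately, and observing that the constant $c$ in the Hellinger expansion is uniform because it depends only on $\gamma$, which is fixed by $\mathcal M$ and the $C_i$. It is worth remarking that the matching positive statement — consistent detection once $n\delta^2\to\infty$, up to the logarithmic factor — is supplied by Theorem~\ref{thm_separation}, so the two results together pin down the separation rate.
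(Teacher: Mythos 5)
Your proof is correct, and it reaches the conclusion by a somewhat different route than the paper. Both arguments are two-point lower bounds built on an explicit divergence computation between the single-observation null and alternative, a tensorization step, and a second-order Taylor expansion in $\delta$; the difference lies in the divergence and in how the reduction to testing errors is made. The paper computes the Kullback--Leibler divergence, uses its additivity to get $\mathcal D_{\text{KL}}(\mathbb P_0^{\otimes n},\mathbb P_1^{\otimes n})\asymp n\delta^2$, and then invokes, without proof, the fact that $\mathcal D_{\text{KL}}\to\infty$ is necessary for consistent testing. You instead compute the Hellinger affinity $\rho=\bigl((1-\gamma)+\gamma\sqrt{1-\delta}\bigr)/\sqrt{1-\gamma\delta}$ (your formula and the expansion $\rho=1-\tfrac18\gamma(1-\gamma)\delta^2+O(\delta^3)$ check out), use its multiplicativity over products, and pass to total variation via $\mathrm{TV}\le\sqrt{1-\rho^2}$ together with Le Cam's identity for the sum of the two error probabilities. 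What your version buys is self-containedness and rigor at exactly the point the paper glosses over: the ``KL $\to\infty$ is necessary'' step is precisely the kind of statement one would justify by a Bretagnolle--Huber or Hellinger/TV bound of the sort you prove directly, and you also handle explicitly the bookkeeping the paper leaves implicit --- the subsequence extraction for $\delta=\delta_n$ with $n\delta_n^2\not\to\infty$ (which also guarantees $\delta_{n_k}\to0$ so the Taylor expansion is legitimate), and the degenerate volume ratios $\gamma\in\{0,1\}$ where the two hypotheses coincide. What the paper's route buys is brevity: KL additivity plus one Taylor expansion, at the cost of citing the necessary condition as a black box.
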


\section{Experimental Results} \label{section_experiments}
Although manifold models are considered to be realistic, we still impose some assumptions for our theoretical study that are usually not satisfied in real-life. Most importantly we assume that our data lies on a manifold without boundary and positive reach up to bounded noise. A comprehensive numerical study of the procedure including real-life data by \cite{AWC} suggested that these assumptions are not necessary in practice and the performance of the algorithm is competitive with state-of-the-art algorithms. Rather, the limiting factor of the algorithm for clustering so-called big data at a global scale seems to be its polynomial complexity. That being said, in this work, we will restrict to some rather simple artificial examples in order to illustrate and verify our theoretical results.

\subsection{Consistency}\label{subsection_consistency}
In order to verify the sensitivity of the AWC algorithm w.r.t. local gaps for data lying on non-linear submanifolds and illustrate the main results Theorem \ref{thm_propagation} and Theorem \ref{thm_separation}, we will start by studying an artificial example where the embedding dimension is equal to \( 2 \) and the intrinsic dimension of the data is \( 1 \). We consider a distribution on the vacinity of the unit circle \( S^1 \) in \( \mathbb R^2 \) with two clusters 
\[\mathcal C_1 \defined \{(x, y) \in S^1: y > \frac{1}{4}\}\]
and 
\[\mathcal C_2 \defined \{(x, y) \in S^1: y < -\frac{1}{4}\}.\]
By \( \mathbb P_\epsilon \) we denote the distribution corresponding to the density
\[f_\epsilon\defined  \frac{1}{2\pi}\left(\mathds 1_{\mathcal C_1 \cup \mathcal C_2} + (1-\epsilon) \mathds 1_{S^1 \setminus (\mathcal C_1 \cup \mathcal C_2)}\right).\]
Moreover, by \( \mathcal U(r) \) we denote the uniform distribution on a 2-dimensional ball of radius \( r \). Then we sample \( X_1, \dots, X_n \) i.i.d. from
\[\mathbb P_\epsilon^{\mathcal U\left(\frac{1}{10}\right)} \defined \mathbb P_\epsilon \ast \mathcal U\left(\frac{1}{10}\right),\]
cf. Figure \ref{fig_exp}.
\begin{figure}[h]
 \begin{minipage}[c]{0.32\textwidth}
\centering
\includegraphics[scale = 0.18]{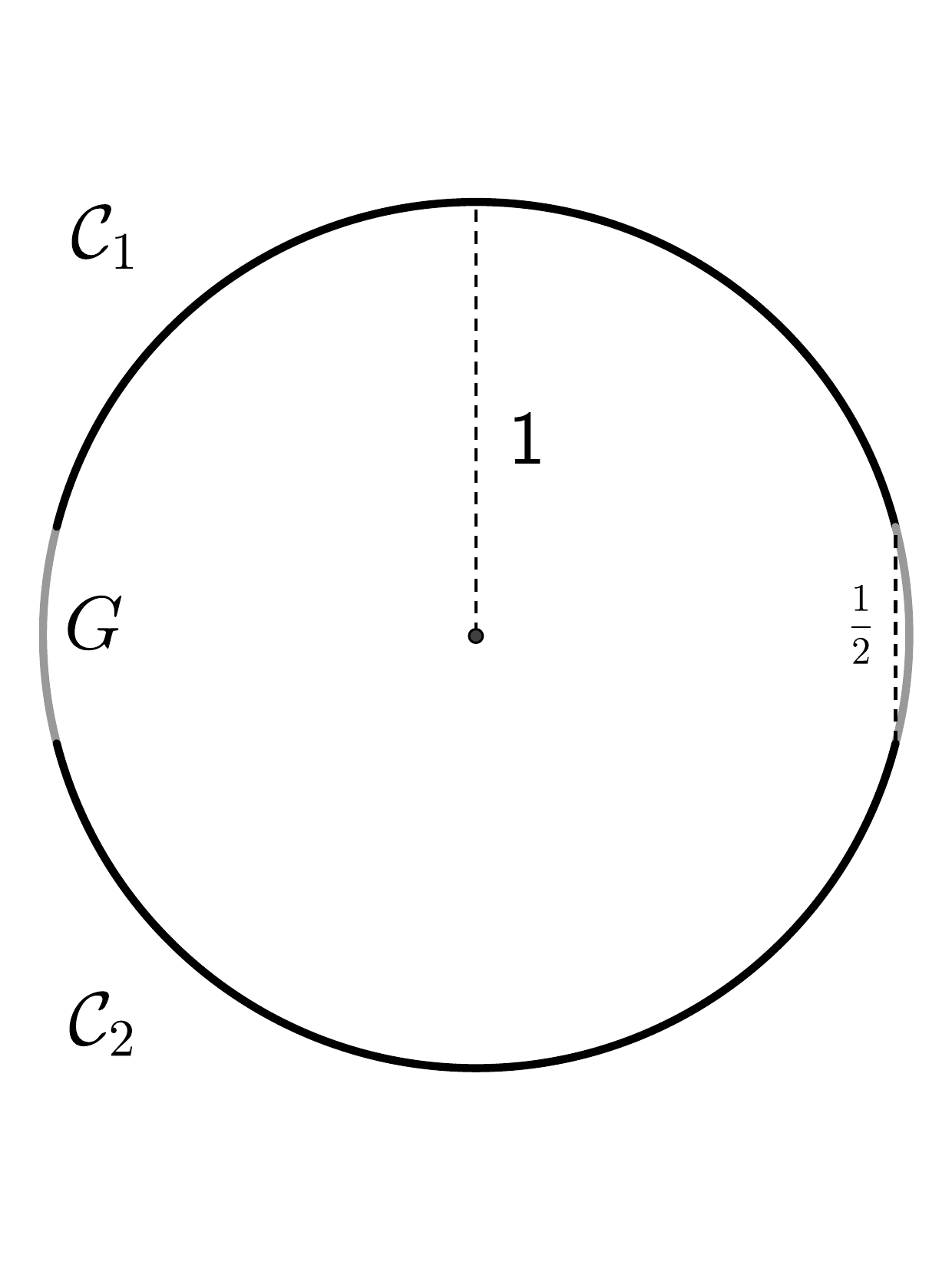}
\end{minipage}
 \begin{minipage}[c]{0.32\textwidth}
\centering
\includegraphics[scale = 0.313]{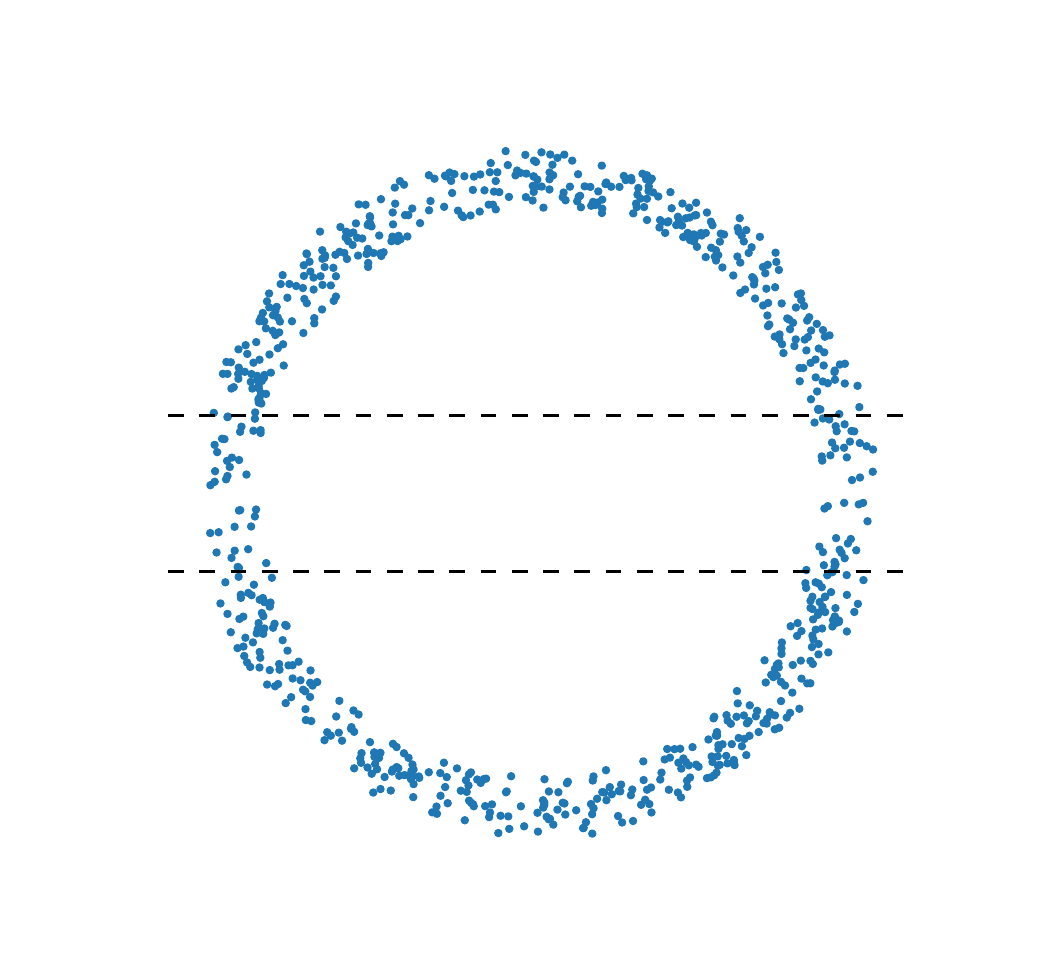}
\end{minipage}
 \begin{minipage}[c]{0.32\textwidth}
\centering
\includegraphics[scale = 0.313]{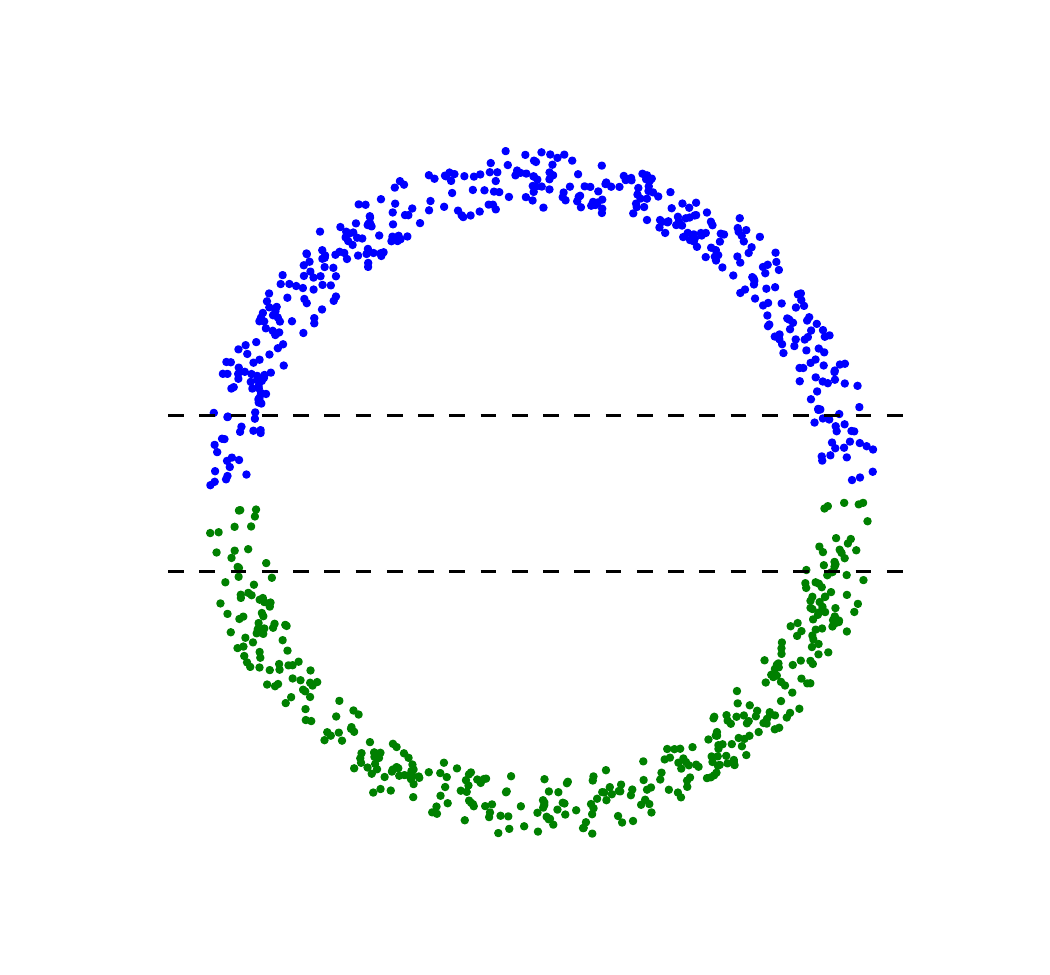}
\end{minipage}
\caption{Density \( f_\epsilon \) (left), i.i.d. sample of size \( n=800 \) from \( \mathbb P_\frac{1}{2}^{\mathcal U\left(\frac{1}{10}\right)} \) with two dashed lines highlighting the gap in the data (center) and clusters obtained via AWC (right)}
\label{fig_exp}
\end{figure}
To measure the performance of the algorithm we use a modified version of the Rand index \cite{rand}
\[\left(\sum_{\substack{(X_i, X_j) \in (\mathcal C_1\cup \mathcal C_2)^2 \\ 0 < ||X_i - X_j || < h_K}} 1\right)^{-1}\left(\sum_{\substack{X_i, X_j \in \mathcal C_1\\  X_i, X_j \in \mathcal C_2\\0 < ||X_i - X_j || < h_K}}w^{(K)}_{ij} +  \sum_{\substack{X_i\in  \mathcal C_1, X_j\in  \mathcal C_2 \\X_i\in  \mathcal C_2, X_j \in  \mathcal C_1\\ ||X_i - X_j || < h_K}}\left(1 - w^{(K)}_{ij} \right)\right) . \]
For simplicity, we refer to this measure as Rand index. It can also be defined as the accuracy of a subset of the weights \( (w_{ij}^{(K)})_{i, j = 1}^n \). As our theoretical results only apply at a local scale, we also restrict here to a local scale \( h_K = 1 \) and fix a series of bandwidths \( h_i = 2^{\frac{i}{2} - 2} \), \(  i = 0, \dots, 4 \). We only adjust the gap coefficient with respect to the intrinsic dimension, that is, we assume the reach and the noise magnitude to be zero in the computation of the adjusted volume coefficient. For each sample, we run the algorithm for different \( \lambda \) and consider only the best resulting Rand index, i.e. we overfit \( \lambda \). Finally, for different values of \( \epsilon \), we repeat the experiment 100 times.
\begin{figure}[t]
\centering
\vspace{-5pt}
\includegraphics[scale = 0.52]{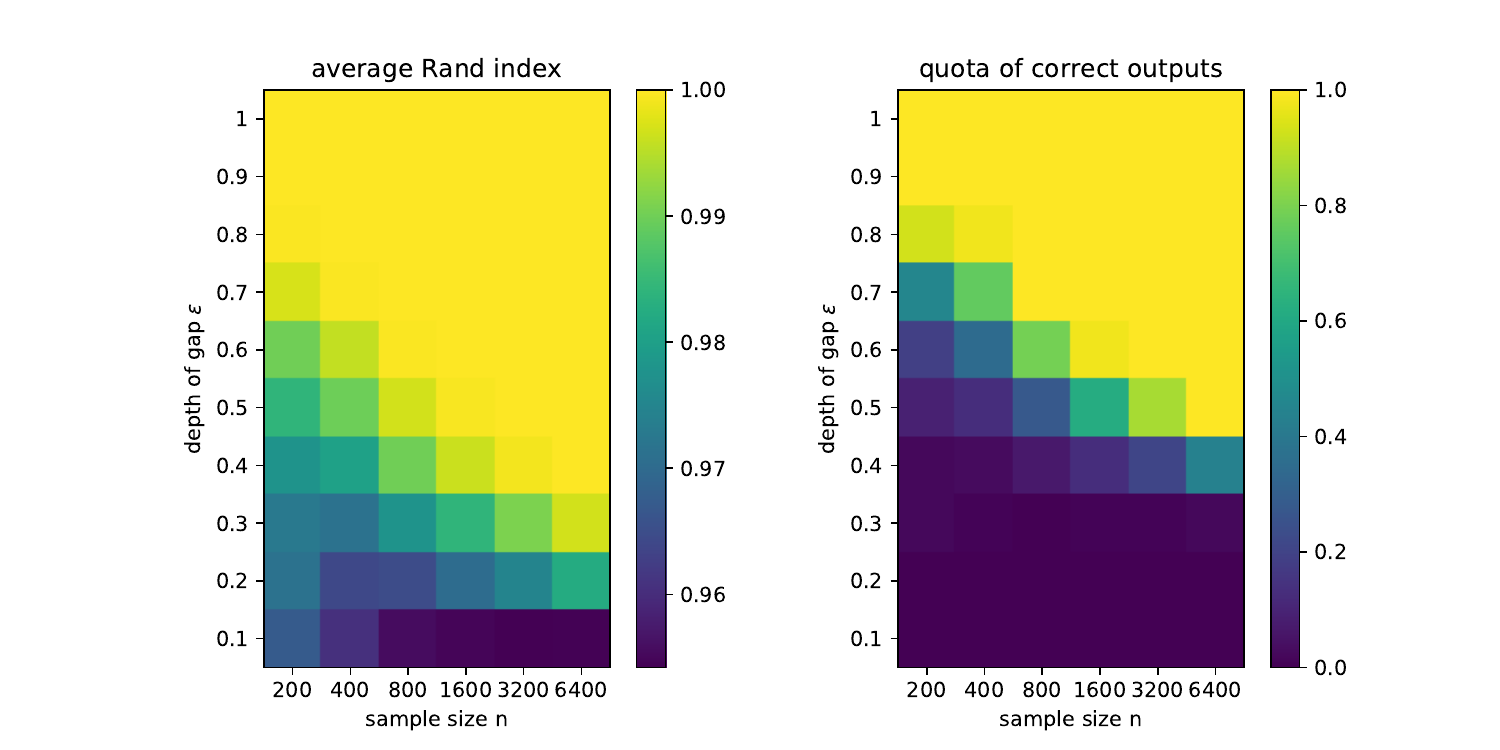}
\caption{Average rand index (left) and quota of experiments yielding a rand index \( 1 \) (right)}
\label{fig_Rand}
\end{figure}
The resulting average rand index is plotted in Figure \ref{fig_Rand} on the left. 
Note that the Rand index is in general quite close to 1, however, this is only due to the imbalance in the considered classification problem. For the evaluation of the results, we are only interested in the relatively large values, e.g. \( \geq 0.99 \). On the right, the quota of experiments is plotted where a rand index of 1 is achieved. This relates to our theoretical results, whereas the average rand index is a more common measure in practice. Our theoretical results show, that the minimal \( \epsilon \), for which we can reconstruct the cluster structure with high probability, is up to logarithmic factors of order \( \sqrt{\frac{1}{n}} \). The experiment is not exhaustive enough to verify this result. However, the results verify the asymptotics \( \epsilon \xrightarrow{n\to\infty} 0 \) and indicate that \( \epsilon \) decreases significantly slower than \( \frac{1}{n} \).

A less expected detail in the plot is the fact, that for small values of the depth \( \epsilon \), we observe better Rand indices as the sample size \( n \) decreases. This can be explained as follows. If \( \epsilon \) is small, our distribution is very close to a distribution without a gap. Thus, for large \( n \), the empirical distribution will also be close to a uniform distribution, and it will be very difficult for the algorithm to detect the clusters. However, for small \( n \), the distribution may deviate more from the uniform distribution and form random clusters that in some cases do accidentally have similarities to the true cluster structure.

\subsection{Scaling of sensitivity parameter \(\lambda\)}

In the experiment above, we also computed for each experiment the minimal value of \( \lambda \) that achieved the largest rand index and plotted the resulting average in Figure \ref{fig_lambda}. The results support our proposition that \( \lambda \) should be scaled logarithmically w.r.t. the data size.
\begin{figure}[h]
\vspace{0pt}
\centering
\vspace{-5pt}
\includegraphics[scale = 0.5]{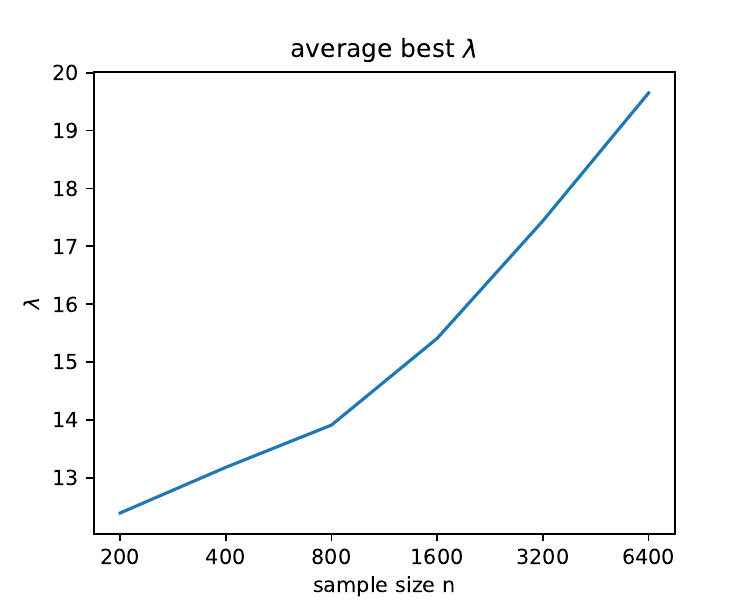}
\caption{Average minimal lambda with best rand index for \( \epsilon = 0.9 \) }
\label{fig_lambda}
\end{figure}

\subsection{High-dimensional data}
\begin{figure}[t]
 \centering
 \includegraphics[scale = 0.55]{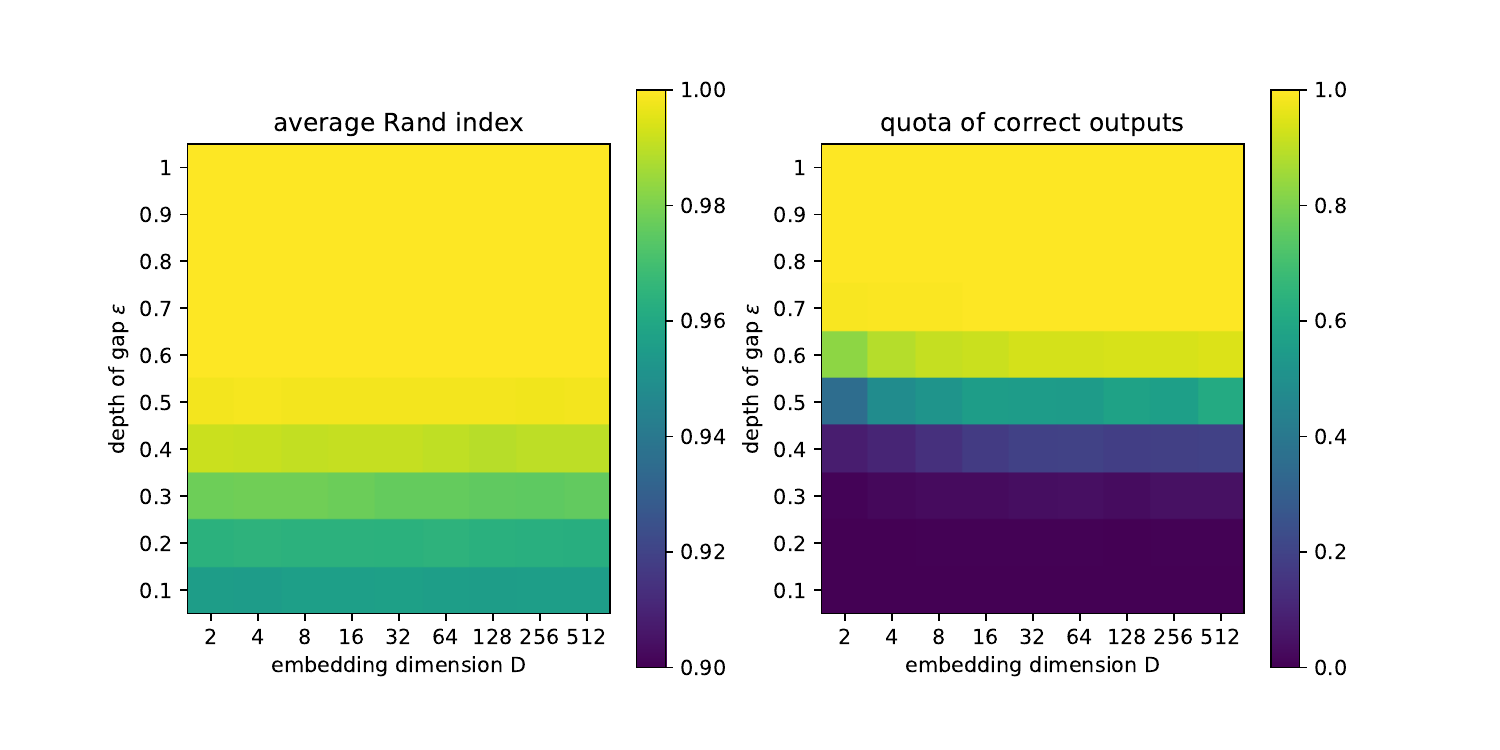}
 \caption{Average rand index (left) and quota of experiments yielding a rand index \( 1 \) (right) for uniform noise of norm \( \leq \frac{1}{10} \) }
 \label{fig_noise_uni}
\end{figure}

\begin{figure}[t]
 \centering
 \includegraphics[scale = 0.55]{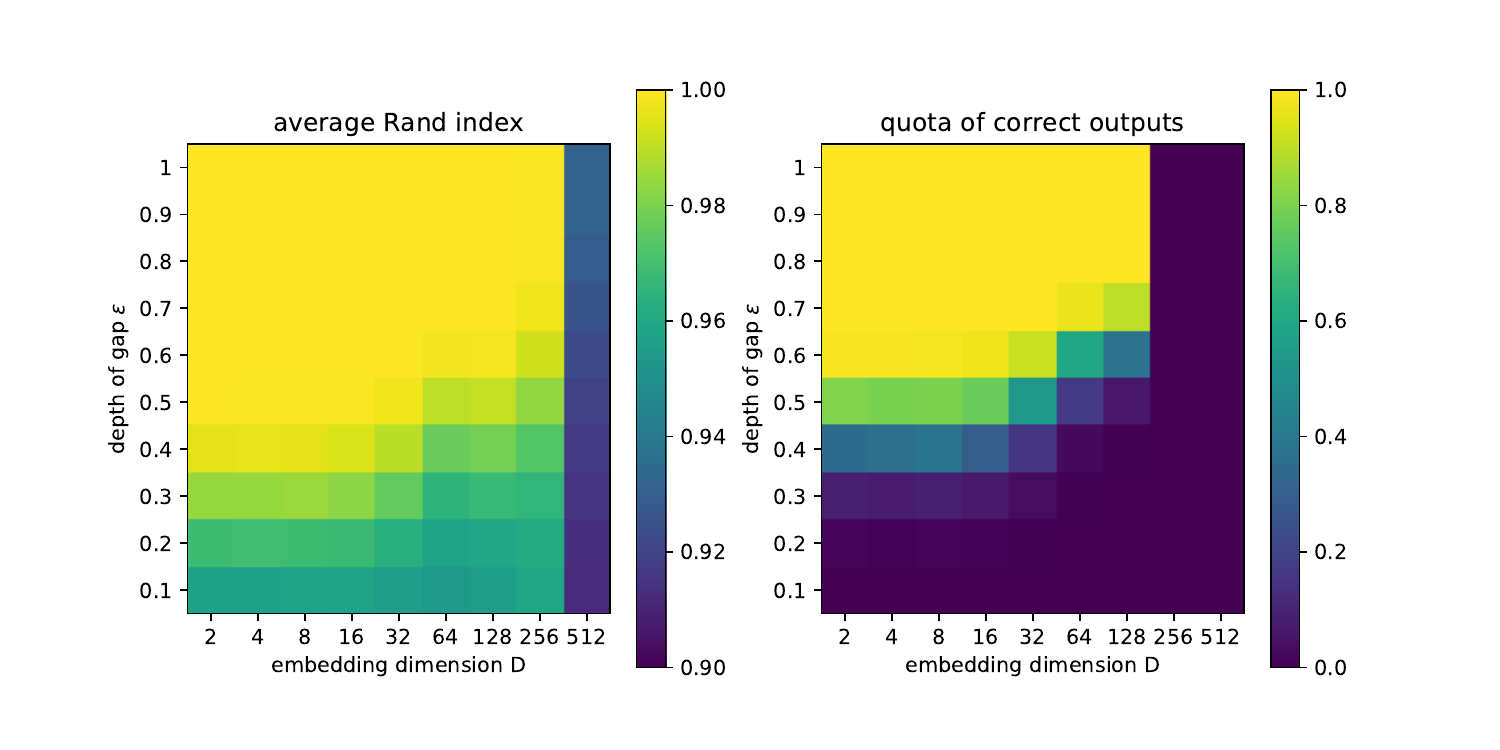}
 \caption{Average rand index (left) and quota of experiments yielding a rand index \( 1 \) (right) for Gaussian noise of variance \( \frac{1}{3200}I_D \) }
 \label{fig_noise_gauss}
\end{figure}

In this subsection, we study the effect of the embedding dimension, i.e. the effect of high-dimensional noise. Recall that the presented results are independent of the embedding dimension \( D \) of the data. However, as we assume the norm of the noise to be bounded. In the case of centered noise with i.i.d. coordinates this implies that for each coordinate the variance is of order \( \mathcal O(D^{-1}) \). This motivates the study of two different noise distributions. Firstly and corresponding to our theoretical results, we consider the uniform distribution \( \mathcal U(r) \) on a centered \( D \) -dimensional ball of radius \( r \). Also we want to consider the centered multivariate normal distribution \( \mathcal N(\sigma^2) \) with covariance matrix \( \sigma^2 I_D \). Note that for large \( D \), \( \mathcal N(\sigma^2) \) is concentrated on a thin annulus around the centered sphere of radius \( \sigma \sqrt{D} \), so the two noise distributions mainly differ in the parametrization of the scale.

By \( \mathbb P_{D, \epsilon} \) we denote an \( D \) -dimensional embedding of the distribution \( \mathbb P_\epsilon \) described in subsection \ref{subsection_consistency}. Then we draw our sample \( X_1, \dots, X_n \) i.i.d. either from
\[\mathbb P_{D, \epsilon}^{\mathcal U\left(\frac{1}{10}\right)} \defined \mathbb P_{D, \epsilon} \ast \mathcal U\left(\frac{1}{10}\right)\]
or
\[\mathbb P_{D, \epsilon}^{\mathcal N\left(\frac{1}{3200}\right)} \defined \mathbb P_{D, \epsilon} \ast \mathcal N\left(\frac{1}{3200}\right).\]
Note that the distribution \( \mathbb P_{\epsilon}^{\mathcal U\left(\frac{1}{10}\right)} \) used in the above experiments is a special case of \( \mathbb P_{D, \epsilon}^{\mathcal U\left(\frac{1}{10}\right)} \) for \( D=2 \). Moreover, for \( D=32 \), both distributions concentrate on the proximity of a centered sphere of radius \( \frac{1}{10} \). Thus we might expect similar performance of the algorithm for both distributions for \( D=32 \). According to our results, the performance should not break down in the uniform case for large \( D \) while we expect the performance to decrease with growing embedding dimension for the Gaussian noise as the noise radius increases.

We fix the sample size \( n=1000 \) and proceed otherwise analogously to the first experiment: For each sample, we optimize \( \lambda \) and repeat the experiment 1000 times for each value of \( \epsilon \). The resulting average rand indices, as well as the quota of experiments with rand index equal to 1, are presented in Figures \ref{fig_noise_uni} and \ref{fig_noise_gauss} and confirm our expectations. We observe one interesting detail in the quota of correct outputs in the presence of uniform noise on the right plot in Figure \ref{fig_noise_uni}. For a very small embedding dimension \( D \) the performance is slightly worse. A possible explanation is that the high-dimensional noise approximately preserves distances up to a constant summand with large probability. So in this experiment, the separation of the two clusters might be more difficult under smaller embedding dimension \( D \).

\subsection{Effect of intrinsic dimension parameter \(d\)} \label{subsection_experiment_intrinsic_dimension_effect}
Our theoretical results require knowledge of the parameter \(d\) of the effective dimension of the data. Otherwise, we cannot expect consistency under the asymptotics \(\epsilon \to 0\). In practical applications, the dimension parameter is often unknown and can be estimated \cite{dimension_estimation}. However, under the reasonable assumption that \(d\) is not too large, e.g. \(d\leq 5\), we can also just run the clustering procedure for the different values of \(d\). In both cases, uncertainty about the true intrinsic dimension remains. Unfortunately, our theoretical study does not provide much insight into the stability of the algorithm with respect to the dimension parameter.

In order to observe the effect of both under- and overstimation of the dimension parameter, we will consider the following simple 2-dimensional example. We consider a distribution on the unit sphere \( S^2 \) in \( \mathbb R^3 \) with two clusters 
\[\mathcal C_1 \defined \{(x, y, z) \in S^2: z > \frac{1}{4}\}\]
and 
\[\mathcal C_2 \defined \{(x, y, z) \in S^2: z < -\frac{1}{4}\}.\]
We sample \( X_1, \dots, X_n \) i.i.d. from the distribution \( \mathbb P_\epsilon \) corresponding to the density
\[f_\epsilon \propto \mathds 1_{\mathcal C_1 \cup \mathcal C_2} + (1-\epsilon) \mathds 1_{S^2 \setminus (\mathcal C_1 \cup \mathcal C_2)},\]
cf. Figure \ref{fig_sphere_densty}.
\begin{figure}
\begin{minipage}[c]{0.49\textwidth}
 \includegraphics[width = 225pt, height =212pt]{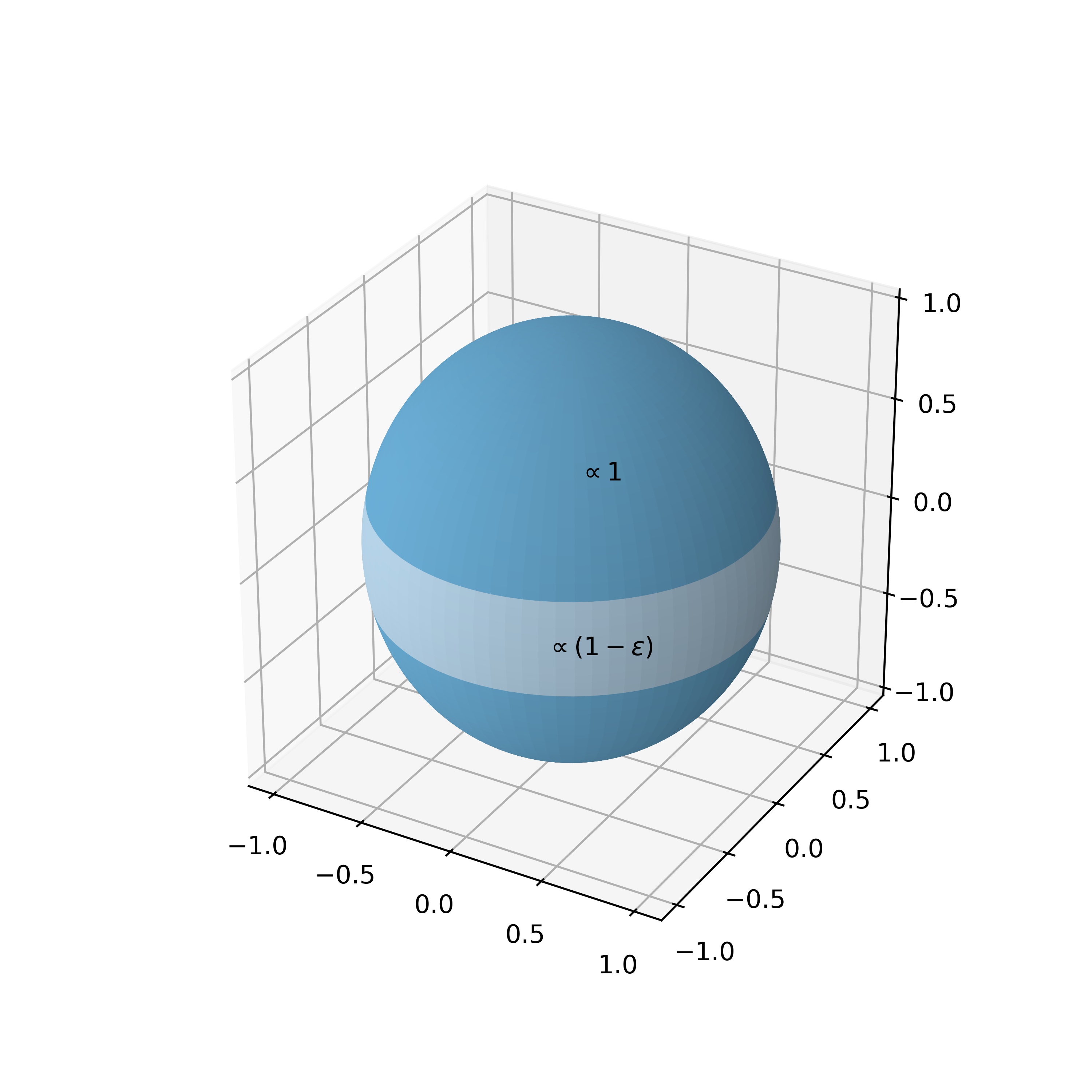}
\end{minipage}
\begin{minipage}[c]{0.49\textwidth}
 \includegraphics[width = 225pt, height =212pt]{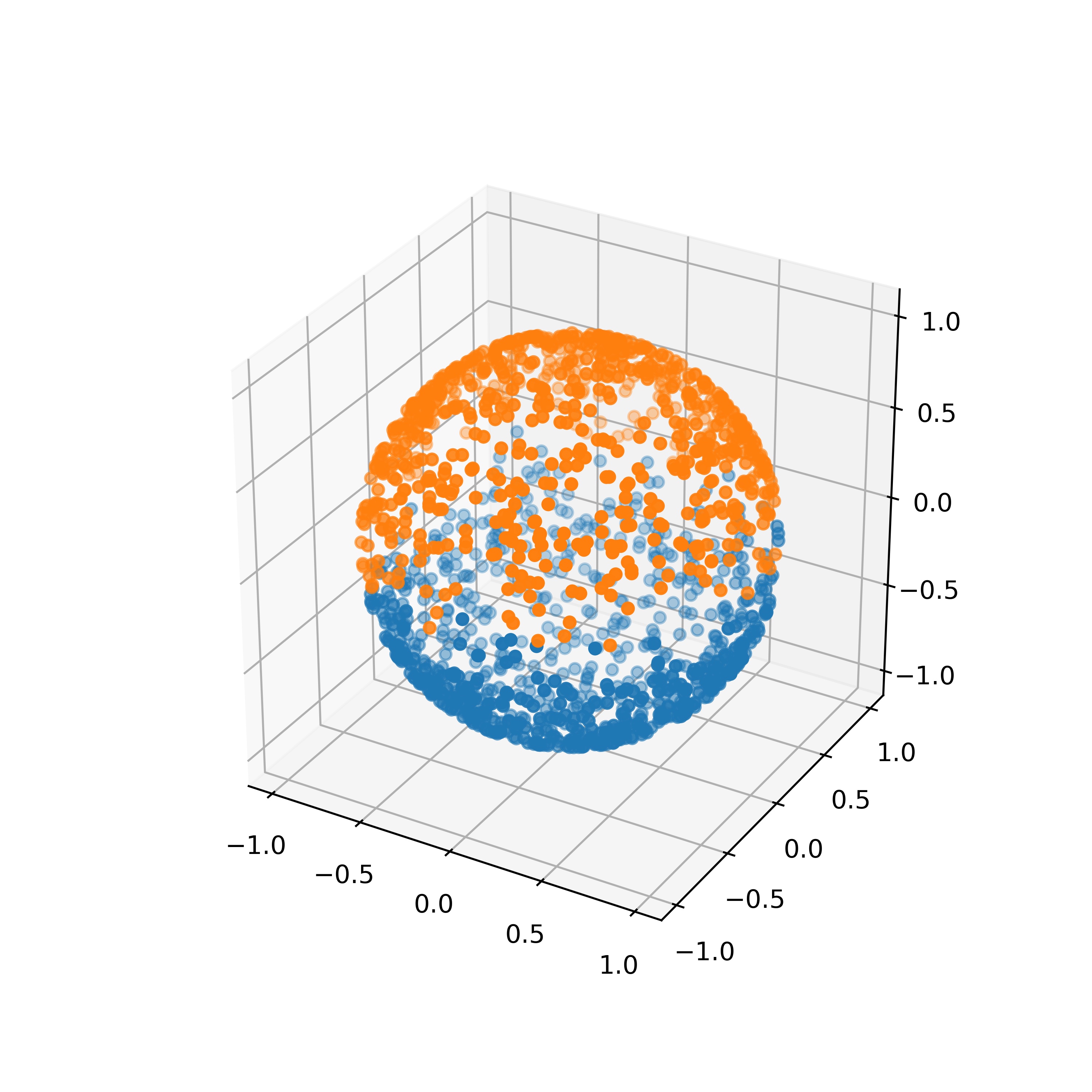}
\end{minipage}
\caption{Left: Sketch of density \(f_\epsilon\). Right: Obtained clustering from AWC with parameters \(d = 1\) and \(\lambda = 50\) for a sample of size \(n=1000\) and depth \(\epsilon = \frac{2}{3}\)}
\label{fig_sphere_densty}
\end{figure}
For a sample of size \(n=1000\) with depth \(\epsilon = \frac{2}{3}\), we consider various parameter \(\lambda\) and plot the corresponding sum of weight heuristic \(S\), i.e. the normalized sum of all weigths obtained at the final step of the AWC procedure. This statistic is a possible way to tune $\lambda$ in practice. One might simply take \(\lambda\) at a plateau of the graph of \(S\), as it is expected for a clear cluster structure that the output of the algorithm is stable with respect to the tuning parameter. The results are shown in figures \ref{fig_sphere_densty} and \ref{fig_sphere_sow}.\\
\begin{figure}
\begin{minipage}[c]{0.32\textwidth}
 \includegraphics[scale = 0.33]{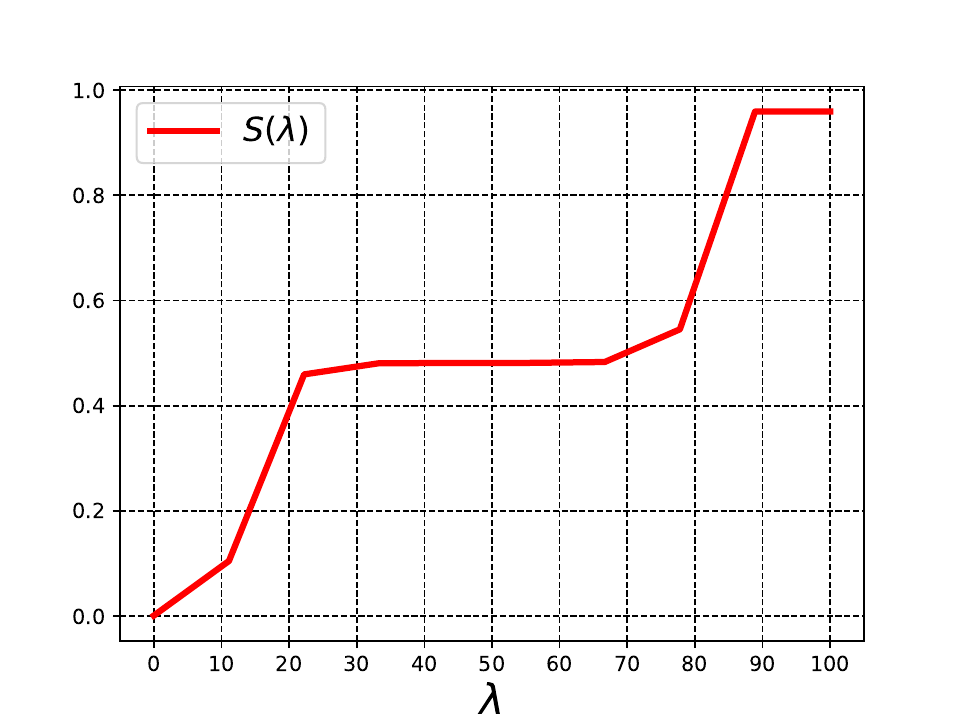}
\end{minipage}
\begin{minipage}[c]{0.32\textwidth}
 \includegraphics[scale = 0.33]{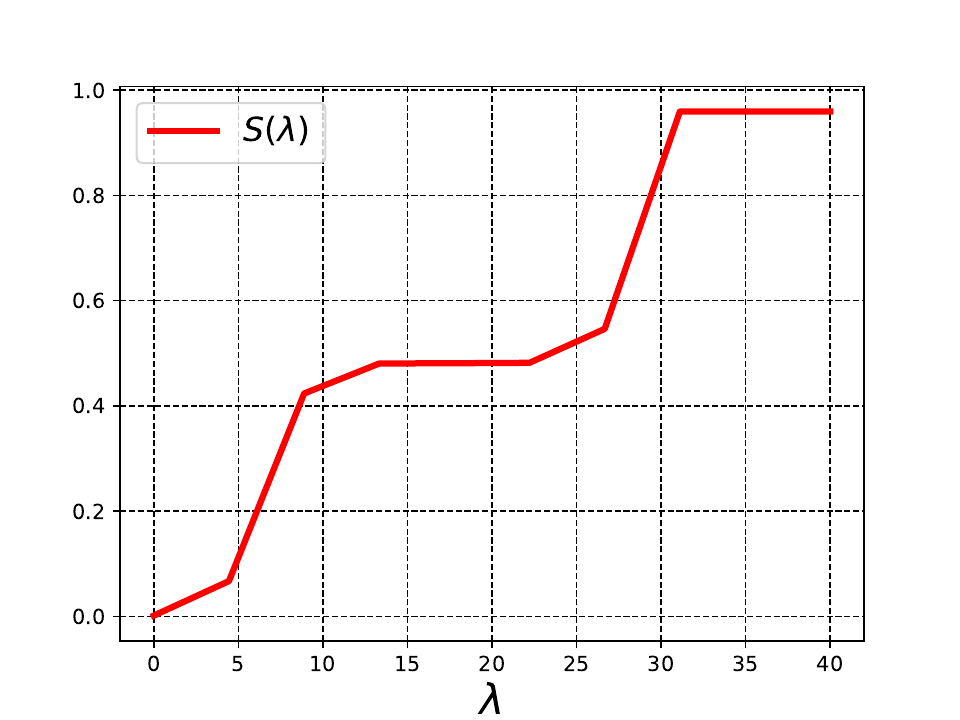}
\end{minipage}
\begin{minipage}[c]{0.32\textwidth}
 \includegraphics[scale = 0.33]{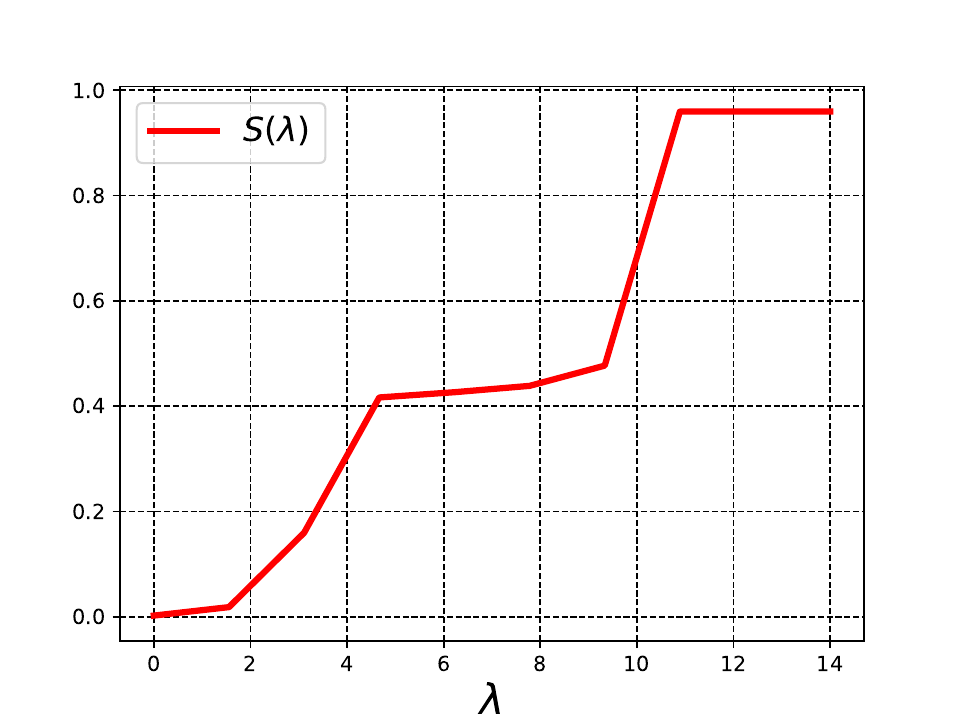}
\end{minipage}
\caption{Sum of weights heuristics for the sample in Figure \ref{fig_sphere_densty} with parameters \(d=1\) (left), \(d=2\) (middle) and \(d=3\) (right)}
\label{fig_sphere_sow}
\end{figure}
In figure \ref{fig_sphere_sow} we see for each dimension parameter \(d=1,2, 3\) a unique plateau at a value around \(0.5\). The value \(S(\lambda)=0.5\) corresponds to two clusters of equal size. Indeed a plot for the parameters \((d=1, \lambda = 50)\) in figure \ref{fig_sphere_densty} verifies that the cluster structure is detected as expected. We ommited plots for \((d=2, \lambda = 20)\) and \((d=3, \lambda = 8)\), as the results are nearly identical. Moreover, we observe that the scaling of \(\lambda\) depends on \(d\). A larger dimension parameter requires smaller \(\lambda\). This can be explained by the fact the the corresponding volume coefficient decreases with an increase of the dimension parameter. So it is harder for the algorithm to detect gaps, while the propagation effect is even stronger. A smaller \(\lambda\) compensates this effect.

The experiment suggests that the AWC procedure is able to detect the cluster structure even if the effective dimension parameter \(d\) is over- or underestimated. However, the scaling of \(\lambda\) depends on the choice of \(d\).

\section{Proofs} \label{section_proofs}

\begin{proof}[Proof of Proposition \ref{lem_exp_growth_coefficient}]
The main tool for the bounds will the series representation
\[\mathcal B\left(x, a, b\right) = x^a \sum\limits_{n=0}^\infty \frac{\Gamma(1 - b + n)}{\Gamma(1 - b)\Gamma(n + 1)(a + n)}x^n\]
for the incomplete beta function \cite{beta-function2}. Also, we use the logarithmic convexity of the gamma function.
For the upper bound we get
\begin{align}
q_d(t) &= \frac{\mathcal B\left(1-\frac{t^2}{4}, \frac{d+1}{2}, \frac{1}{2}\right)}{2\mathcal B\left(\frac{d+1}{2}, \frac{1}{2} \right) - \mathcal B\left(1-\frac{t^2}{4}, \frac{d+1}{2}, \frac{1}{2}\right)}\nonumber\\
& \leq \frac{\mathcal B\left(1-\frac{t^2}{4}, \frac{d+1}{2}, \frac{1}{2}\right)}{\mathcal B\left(\frac{d+1}{2}, \frac{1}{2} \right)}\nonumber\\
&\leq \frac{ \frac{2}{d+1} \sum\limits_{n=0}^{\infty}\left(1-\frac{t^2}{4}\right)^{\frac{d+1}{2}+n}}{\mathcal B\left(\frac{d+1}{2}, \frac{1}{2} \right)}\nonumber\\
&=\frac{\frac{2}{d+1} \left(1-\frac{t^2}{4}\right)^\frac{d+1}{2} \Gamma\left(\frac{d+2}{2}\right)}{\frac{t^2}{4} \Gamma\left(\frac{d+1}{2}\right)\Gamma\left(\frac{1}{2}\right)}\nonumber\\
&\leq\frac{\frac{2}{d+1} \left(1-\frac{t^2}{4}\right)^\frac{d+1}{2}\Gamma^\frac{1}{2}\left(\frac{d+3}{2}\right)}{\frac{t^2}{4} \Gamma^\frac{1}{2}\left(\frac{d+1}{2}\right)\Gamma\left(\frac{1}{2}\right)}\nonumber\\
&= 2^\frac{5}{2} t^{-2}\frac{\left(1-\frac{t^2}{4}\right)^\frac{d+1}{2}}{(d+1)^\frac{1}{2} \Gamma\left(\frac{1}{2}\right)}\nonumber
\end{align}
and similarly, we compute the lower bound
\begin{align}
 q_d(t) &\geq\frac{\mathcal B\left(1-\frac{t^2}{4}, \frac{d+1}{2}, \frac{1}{2}\right)}{2\mathcal B\left(\frac{d+1}{2}, \frac{1}{2} \right) }\nonumber\\
&\geq \frac{\left(1-\frac{t^2}{4}\right)^{\frac{d+1}{2}}}{(d+1)\mathcal B\left(\frac{d+1}{2}, \frac{1}{2} \right)}\nonumber\\
&=\frac{\left(1-\frac{t^2}{4}\right)^\frac{d+1}{2} \Gamma\left(\frac{d+2}{2}\right)}{(d+1) \Gamma\left(\frac{d+1}{2}\right)\Gamma\left(\frac{1}{2}\right)}\nonumber\\
&\geq \frac{\left(1-\frac{t^2}{4}\right)^\frac{d+1}{2} \Gamma^{\frac{1}{2}}\left(\frac{d+2}{2}\right)}{(d+1) \Gamma^\frac{1}{2}\left(\frac{d}{2}\right)\Gamma\left(\frac{1}{2}\right)}\nonumber\\
&= \frac{d^\frac{1}{2} \left(1-\frac{t^2}{4}\right)^\frac{d+1}{2}}{2^{\frac{1}{2}}(d+1)\Gamma\left(\frac{1}{2}\right)}\nonumber\\
&\geq 2^{-1}\frac{\left(1-\frac{t^2}{4}\right)^\frac{d+1}{2}}{(d+1)^\frac{1}{2}\Gamma\left(\frac{1}{2}\right)}.\nonumber
\end{align}
\end{proof}

For the proof of Proposition \ref{lem_gap_coef} we will use the following two auxiliary Lemmas. By \( \vol(\cdot ) \) we denote the Lebesgue volume on a submanifold of \( \mathbb R^D \). We will consider different such manifolds and not specify them explicitly, as long as it clear from the context to which manifold we refer.

\begin{lem}\label{lem_volume_lipschitz}
For any \( d \)-dimensional \( C^2 \) submanifolds \( \mathcal M_1 \), \( \mathcal M_2\in\mathbb R^D \), a measurable subset \( A\subset \mathcal M_1 \) and a \( C \)-Lipschitz function \( f:\mathcal M_1\to\mathcal M_2 \), we have
\[\vol(f(A))\leq C^d\vol(A).\]
\end{lem}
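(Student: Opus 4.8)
The plan is to reduce the statement to the classical change-of-variables estimate for Lipschitz maps together with the standard fact that the $d$-dimensional Hausdorff measure and the Riemannian volume coincide on a $C^2$ submanifold. First I would recall that $\vol$ on a $d$-dimensional $C^2$ submanifold of $\mathbb R^D$ agrees with the $d$-dimensional Hausdorff measure $\mathcal H^d$ restricted to that submanifold; this lets me work entirely with $\mathcal H^d$ and thus forget the intrinsic metric structure of $\mathcal M_1$ and $\mathcal M_2$.

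Next I would invoke the defining property of Hausdorff measure under Lipschitz maps: if $g:(E_1,\rho_1)\to(E_2,\rho_2)$ is $C$-Lipschitz between metric spaces and $A\subset E_1$, then $\mathcal H^d(g(A))\leq C^d\,\mathcal H^d(A)$. This is immediate from the definition of $\mathcal H^d$ via coverings, since a cover of $A$ by sets of diameter $\leq\delta$ maps to a cover of $g(A)$ by sets of diameter $\leq C\delta$, and each diameter gets multiplied by at most $C$, contributing the factor $C^d$ to every term of the approximating sum; passing to the infimum and then $\delta\to 0$ gives the claim. Applying this with $g=f$, $E_i=\mathcal M_i$ equipped with their geodesic (or equivalently, up to the relevant constant, ambient) metrics, and using $\vol=\mathcal H^d$ on each side yields exactly $\vol(f(A))\leq C^d\vol(A)$.

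A minor point to address is which metric on $\mathcal M_1$ and $\mathcal M_2$ the Lipschitz constant $C$ refers to: the Hausdorff-measure argument above works verbatim whenever $f$ is $C$-Lipschitz with respect to the metrics used to compute $\mathcal H^d$ on each manifold, and since on a $C^2$ submanifold the Riemannian distance and the ambient Euclidean distance are locally comparable with constant tending to $1$, and $\mathcal H^d$ is a local (countably additive) set function, one may assume without loss of generality that $A$ is contained in a single chart and pass to the ambient metric; I would state this reduction briefly. The only genuine obstacle, and it is a mild one, is making the measure-theoretic identification $\vol=\mathcal H^d$ precise and citing it cleanly (e.g. via the area formula or Federer's coarea/area machinery); everything else is the one-line covering estimate. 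So the proof is essentially: identify $\vol$ with $\mathcal H^d$, then quote the elementary scaling bound $\mathcal H^d(f(A))\leq C^d\mathcal H^d(A)$ for $C$-Lipschitz $f$.
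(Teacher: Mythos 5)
Your proposal is correct and follows essentially the same route as the paper: the paper likewise reduces $\vol$ to the $d$-dimensional Hausdorff measure (equal up to a fixed normalization constant) and invokes the covering-definition fact that a $C$-Lipschitz map increases $\mathcal H^d$ by at most $C^d$, which you simply spell out explicitly. Your extra remarks on geodesic versus ambient metrics are harmless but not needed, since one can compute $\mathcal H^d$ with the ambient Euclidean metric throughout, which is the metric the Lipschitz constant refers to in the paper's application.
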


\begin{proof}
This inequality is also valid for the \( d \) -dimensional Hausdorff measure. In this case, it is a simple consequence of the definition of the Hausdorff measure \cite{castro}. As the Lebesgue measure is related by a constant factor \cite{measure_analysis}, it also holds for the Lebesgue measure.
\end{proof}

For the second auxiliary Lemma we consider a connected and compact \( C^2 \) submanifold \( \mathcal M\subset\mathbb R^D \) with reach \( \frac{1}{\kappa}>0 \) and without boundary. For some fixed \( x\in\mathcal M \) we denote the tangent plane of \( \mathcal M \) at \( x \) by \( \mathcal T \). Also, we consider the projection \( P: \mathbb R^D \rightarrow  \mathcal T \) associating each \( y\in \mathbb R^D \) with the closest point in \( \mathcal T \).

\begin{lem}\label{lem_local_lipschitz_param}
Suppose \( 0<r \leq \frac{1}{40\kappa} \). Then the restriction \(P\vert_{\mathcal M\cap B(x, r)}\) is a 1-Lipschitz injection and its image contains \( \mathcal T\cap B(x, r/L) \). Moreover, its inverse is \( L \)-Lipschitz for \[ L \defined 1+40\kappa^2 r^2 \leq 1+\kappa r.\]
\end{lem}

\begin{proof}
 This Lemma is given in \cite{castro} with some unspecified small enough constant instead of \( \frac{1}{40} \). Following the corresponding proof, it can be easily verified that this constant is indeed small enough.
\end{proof}

\begin{proof}[Proof of Proposition \ref{lem_gap_coef}] 
Let us denote the uniform measure on the manifold with \( \mu \). For \( i=1, 2 \), we choose a point \( M_i' \) on the manifold \( \mathcal M \) of distance at most \( r_\xi \) to \( M_i \). Because the Euclidean norm of the noise \( \xi \) is bounded by \( r_\xi \), we get
\begin{align}
 q_l &\defined  \frac{\int \mathds{1}_{B(M_1', r-2r_\xi)\cap B(M_2', r-2r_\xi)} d \mu}{\int \mathds{1}_{B(M_1', r+2r_\xi)\cup B(M_2', r+2r_\xi)} d \mu} \nonumber\\
 &\leq q_{\mathbb P}\nonumber\\
 &\leq  \frac{\int \mathds{1}_{B(M_1', r+2r_\xi)\cap B(M_2', r+2r_\xi)} d \mu}{\int \mathds{1}_{B(M_1', r-2r_\xi)\cup B(M_2', r-2r_\xi)} d \mu} \nonumber\\
 &\definedas q_u \label{ineq_def_ql_qu}
\end{align}
Let us denote by \( \cupcap \) one of the symbols \( \cap \) or \(  \cup \) and suppose \( r'\in\mathopen{[}r-2r_\xi, r+2r_\xi\mathclose{]} \). By \( P \) we denote the orthogonal projection onto the tangent plane \( \mathcal T \) of \( \mathcal M \) at \( M_1' \). Our assumptions ensure that a ball of radius \( 3r \) around \( M_1' \) contains both \( B(M_1', r') \) and \( B(M_2', r') \). Since the restriction \( P\vert_{\mathcal M\cap B(M_1', 3r)} \) is an injective 1-Lipschitz map with an \( L \) -Lipschitz inverse with \( L\defined 1+360\kappa^2 r^2 \), we conclude (cf. \cite{castro})
 \begin{equation}
L^{-d} \leq \frac{\vol(P(\mathcal M\cap (B(M_1', r')\cupcap B(M_2', r'))))}{\vol(\mathcal M\cap (B(M_1', r')\cupcap B(M_2', r')))}\leq 1. \label{ineq_vol_projection}
 \end{equation}
Moreover, the above Lipschitz constants imply 
\[\mathcal T\cap B\left(P(M_i'), \frac{r'}{L}\right)\subseteq P(\mathcal M\cap B(M_i', r')) \subseteq \mathcal T\cap B(P(M_i'), r')\] for \( i = 1, 2 \) and therefore
 \begin{align}
  1 &\leq \frac{\vol \left(\mathcal T \cap (B(P(M_1'), r') \cupcap B(P(M_2'), r'))\right)}{\vol\left(P(\mathcal M\cap (B(M_1', r') \cupcap B(M_2', r')))\right)}\nonumber\\
 &\leq \frac{\vol \left(\mathcal T \cap (B(P(M_1'), r') \cupcap B(P(M_2'), r'))\right)}{\vol \left(\mathcal T \cap (B(P(M_1'), \frac{r'}{L}) \cupcap B(P(M_2'), \frac{r'}{L}))\right)} \definedas q_{\cupcap, r'}. \label{ineq_def_q_box_r}
 \end{align}
Note also that according to our assumptions, any intersections encountered so far are nonempty.
From \eqref{ineq_vol_projection} and \eqref{ineq_def_q_box_r} we conclude
\begin{align}
&q_{\cupcap, r'}^{-1}  \vol \left(\mathcal T \cap (B(P(M_1'), r') \cupcap B(P(M_2'), r'))\right) \nonumber\\
&\qquad\leq \vol\left(P(\mathcal  M\cap (B(M_1', r') \cupcap B(M_2', r')))\right)\nonumber\\
&\qquad\leq  \vol\left(\mathcal M\cap (B(M_1', r')\cupcap B(M_2', r'))\right)\nonumber\\
 &\qquad\leq L^d \vol\left(P(\mathcal M\cap (B(M_1', r') \cupcap B(M_2', r')))\right)\nonumber\\  
  &\qquad\leq L^d \vol \left(\mathcal T \cap (B(P(M_1'), r') \cupcap B(P(M_2'), r'))\right) \nonumber
\end{align}
and obtain
\begin{equation}
q_{\cupcap, r'}^{-1}\leq \frac{ \vol\left(\mathcal M\cap (B(M_1', r')\cupcap B(M_2', r'))\right)}{\vol \left(\mathcal T \cap (B(P(M_1'), r') \cupcap B(P(M_2'), r'))\right)} \leq L^d. \label{ineq_m_vs_t}
\end{equation}
In particular, considering \( (\cupcap, r')=(\cap, r+2r_\xi) \) and \( (\cupcap, r')=(\cup, r-2r_\xi) \) in \eqref{ineq_m_vs_t}, we get
\begin{equation}
q_u \leq q_{\cup, r-2r_\xi}L^d q_\cup q_{r+2r_\xi}\text{,} \label{ineq_q_u}
\end{equation}
where \( q_{r'} \) is defined as
\begin{equation}
q_{r'} \defined \frac{\vol \left(\mathcal T \cap B(P(M_1'), r') \cap B(P(M_2'), r')\right)}{\vol \left(\mathcal T \cap (B(P(M_1'), r') \cup B(P(M_2'), r'))\right)}\nonumber
\end{equation}
for \( r'\in\mathopen{[}r-2r_\xi, r+2r_\xi\mathclose{]} \) and
\begin{equation}
q_\cup \defined \frac{\vol \left(\mathcal T \cap (B(P(M_1'), r+2r_\xi) \cup B(P(M_2'), r+2r_\xi))\right)}{\vol \left(\mathcal T \cap (B(P(M_1'), r-2r_\xi) \cup B(P(M_2'), r-2r_\xi))\right)}.\nonumber
\end{equation}
For the lower bound, we similarly obtain
\begin{equation}
q_l \geq q_{\cap, r-2r_\xi}^{-1}L^{-d} q_\cup^{-1} q_{r-2r_\xi}.\label{ineq_q_l}
\end{equation}
The quotient \( q_{r'} \) is exactly the volume coefficient defined in \eqref{def_vol_coef} in dimension \( d \) at \(  \frac{\|P(M_1') - P(M_2')\|}{r'} \). The derivative of \( q_d \) is given by
\begin{equation}
 q_d'(t) = -2\left(1-\frac{t^2}{4}\right)^{\frac{d-1}{2}} \frac{\mathcal B\left(\frac{d+1}{2}, \frac{1}{2}\right)}{\left(2\mathcal B\left(\frac{d+1}{2},  \frac{1}{2}\right) - \mathcal B\left(1-\frac{t^2}{4}, \frac{d+1}{2}, \frac{1}{2}\right)\right)^2}.\nonumber
\end{equation}
Its absolute value on \( \mathopen[0, 2\mathclose) \) is bounded from above by \( \frac{2}{\mathcal B\left(\frac{d+1}{2}, \frac{1}{2}\right)} \). For the following we define \( s\defined \frac{\|M_1 - M_2\|}{r} \).
Because \( q_d \) is a monotonely decreasing function on \( \mathopen[0, 2\mathclose) \) and  \[\|P(M_1')-P(M_2')\| - 2r_\xi\leq \|M_1-M_2\|\leq L \|P(M_1')-P(M_2')\| + 2r_\xi\text{,}\] we have
\begin{align}
q_{r+2r_\xi} 
&\leq q_d\left(\frac{\max\{0, \|M_1-M_2\|-2r_\xi\}}{L(r+2r_\xi)}\right)\nonumber\\
&\leq q_d(s) +  \frac{2}{\mathcal B\left(\frac{d+1}{2}, \frac{1}{2}\right)} \left(s - \frac{\|M_1-M_2\|-2r_\xi}{L(r+2r_\xi)}\right)\nonumber\\
&= q_d(s) + \frac{2}{\mathcal B\left(\frac{d+1}{2}, \frac{1}{2}\right)} \left(\frac{sr(L-1)}{L(r+2 r_\xi)} + \frac{2sr_\xi}{r+2r_\xi} + \frac{2r_\xi}{L(r+2r_\xi)}\right)\nonumber\\
&\leq q_d(s) +  \frac{1440\kappa^2 r^2}{\mathcal B\left(\frac{d+1}{2}, \frac{1}{2}\right) }  +  \frac{12\frac{r_\xi}{r}}{\mathcal B\left(\frac{d+1}{2}, \frac{1}{2}\right) }\nonumber\\
& \leq q_d(s)\left(1+\frac{1440\kappa^2r^2}{q_d(b')\mathcal B\left(\frac{d+1}{2}, \frac{1}{2}\right) }  \right)\left(1+\frac{12\frac{r_\xi}{r}}{q_d(b')\mathcal B\left(\frac{d+1}{2}, \frac{1}{2}\right) } \right).\label{ineq_q_r+2rxi}
\end{align}
Similarly, we obtain
\begin{align}
q_{r-2r_\xi} & \geq q_d\left(\frac{\|M_1-M_2\|+2r_\xi}{r-2r_\xi}\right)\nonumber\\
&= q_d(s) \left(\frac{q_d(s)}{q_d\left(\frac{\|M_1-M_2\|+2r_\xi}{r-2r_\xi}\right)}\right)^{-1}\nonumber\\
& \geq q_d(s) \left(\frac{q_d\left(\frac{\|M_1-M_2\|+2r_\xi}{r-2r_\xi}\right)+\frac{2}{\mathcal B\left(\frac{d+1}{2}, \frac{1}{2}\right)}\left(\frac{\|M_1-M_2\|+2r_\xi}{r-2r_\xi} - s\right)}{q_d\left(\frac{\|M_1-M_2\|+2r_\xi}{r-2r_\xi}\right)}\right)^{-1}\nonumber\\
&\geq q_d(s)\left(1+\frac{2\left(\frac{2sr_\xi}{r-2r_\xi} + \frac{2r_\xi}{r-2r_\xi}\right)}{q_d\left(b'\right)\mathcal B\left(\frac{d+1}{2}, \frac{1}{2}\right)}\right)^{-1}\nonumber\\
&\geq q_d(s)\left(1+\frac{12\frac{r_\xi}{r}}{q_d\left(b'\right)\mathcal B\left(\frac{d+1}{2}, \frac{1}{2}\right)}\right)^{-1}\label{ineq_q_r-2rxi}.
\end{align}
It remains to find upper bounds for \( q_\cup \), \( c_{\cup, r'} \) and \( q_{\cap, r'} \). Firstly, note that for \( x\in \mathcal T \), we have
\begin{equation}
 q_{\cup, r'} \leq \frac{\vol\left(\mathcal T\cap B(x, \frac{r'}{L})\right) + 2 \vol\left(\mathcal T\cap (B(x, r') \setminus B(x, \frac{r'}{L})\right)}{\vol\left(\mathcal T\cap B(x, \frac{r'}{L})\right)} = 2 L^d - 1.\label{ineq_q_union_r}
\end{equation}
Analogously, using \( (1+x)^d < 1+2xd \) for \( 0\leq x \leq \frac{1}{d} \), we find
\begin{align}
q_\cup&\leq  \left(2\left(\frac{r+2r_\xi}{r-2r_\xi}\right)^d-1\right)\nonumber\\
& \leq \left(2 \left(1+\frac{5r_\xi}{r}\right)^d -1\right)\nonumber\\
&\leq \left(1+\frac{20dr_\xi}{r}\right)\label{ineq_q_union}
\end{align}
and
\begin{equation}
L^d \left(2L^d - 1\right) \leq 1 + 2880d\kappa^2 r^2.\label{ineq_L}
\end{equation}
Moreover, for \( s' \defined \frac{\|P(M_1')-P(M_2')\|}{r'} \),
\begin{align}
q_{\cap, r'} &= q_{\cup, r'} \frac{q_d\left(s'\right)}{q_d\left(s'L\right)}\nonumber\\
&\leq (2L^d - 1) \frac{q_d(s'L) + s'(L-1)\frac{2}{\mathcal B\left(\frac{d+1}{2}, \frac{1}{2}\right)}}{q_d(s'L)}\nonumber\\
&\leq (2L^d - 1) \left(1 + \frac{1440 \kappa^2 r^2}{q_d(b') \mathcal B\left(\frac{d+1}{2}, \frac{1}{2}\right)}\right).\label{ineq_q_intersec_r}
\end{align}
Finally, we derive a tractable bound for \( \frac{1}{q_d(b')\mathcal B\left(\frac{d+1}{2}, \frac{1}{2}\right)} \). 
Using only the first term of the series \cite{beta-function2}
\[\mathcal B\left(x, a, b\right) = x^a \sum\limits_{n=0}^\infty \frac{\Gamma(1 - b + n)}{\Gamma(1 - b)\Gamma(n + 1)(a + n)}x^n\text{,}\]
we get
 \begin{align}
 \frac{1}{q_d(b')\mathcal B\left(\frac{d+1}{2}, \frac{1}{2}\right)} &= \frac{2\mathcal B\left(\frac{d+1}{2}, \frac{1}{2} \right) - \mathcal B\left(1-\left(\frac{b'}{2}\right)^2, \frac{d+1}{2}, \frac{1}{2}\right) }{\mathcal B\left(1-\left(\frac{b'}{2}\right)^2, \frac{d+1}{2}, \frac{1}{2}\right) \mathcal B\left(\frac{d+1}{2}, \frac{1}{2}\right)} \nonumber\\  \nonumber
 &\leq \frac{2}{\mathcal B\left(1-\left(\frac{b'}{2}\right)^2, \frac{d+1}{2}, \frac{1}{2}\right) } \nonumber\\
 & \leq \frac{d+1}{\left(1 - \left(\frac{b'}{2}\right)^2\right)^\frac{d+1}{2}}.  \label{ineq_q_times_beta}
 \end{align}
Finally, putting \eqref{ineq_def_ql_qu}, \eqref{ineq_q_u}, \eqref{ineq_q_l}, \eqref{ineq_q_r+2rxi}, \eqref{ineq_q_r-2rxi}, \eqref{ineq_q_union_r}, \eqref{ineq_q_union}, \eqref{ineq_L}, \eqref{ineq_q_intersec_r} and \eqref{ineq_q_times_beta} together, we obtain 
\begin{equation}
M^{-1} \leq \frac{q_{\mathbb P} }{q_d(s)} \leq M\nonumber
\end{equation}
for
\[M\defined\left(1+2880d\kappa^2  r^2\right)\left(1 + \frac{1440  (d + 1) \kappa^2r^2 }{\left(1-\left(\frac{b'}{2}\right)^2\right)^\frac{d+1}{2}}\right)\left(1+20\frac{dr_\xi}{r}\right)\left(1 + \frac{12 (d + 1) \frac{r_\xi}{r}}{\left(1-\left(\frac{b'}{2}\right)^2\right)^\frac{d+1}{2}}\right).\]
According to our assumptions, both \(2880d\kappa^2  r^2\) and \( \frac{20d r_\xi}{r} \) are not larger than \( 4 \). In particular, \( M \) is bounded from above by \( (1+\varepsilon_{\mathcal M})(1+\varepsilon_\xi) \).
\end{proof}

\begin{proof}[Proof of Theorem \ref{thm_propagation}]
Note that the proof of \cite[Theorem 3.1]{AWC} relies only on the inequality \( \theta_{ij}^{(k)} \geq q_{ij}^{(k)} \) for \( \|X_i-X_j\|\leq h_k \). However, this is ensured by Proposition \ref{lem_gap_coef} and the construction of the adjusted volume coefficient.
\end{proof}

\begin{proof}[Proof of Corollary \ref{cor_propagation}]
This is a simple consequence of Theorem \ref{thm_propagation} and the union bound.
\end{proof}

\begin{proof}[Proof of Theorem \ref{thm_separation}]
Suppose \(x_i, x_j\in\mathbb R^D\) are \(r_\xi\)-close to two different clusters and \(\|x_i-x_j\|\leq h_k\). To simplify notation, we will implizitely condition on \(X_i = x_i\) and \(X_j = x_j\) for the remainder of this proof. For \( l=i, j \) we choose a point \( X_l'\in\mathcal C_{k_l}\) for  \( k_i \neq k_j \) such that \(\|X_l'-X_l\|\leq r_\xi\). Our assumptions imply that the density \(f\) in the overlap \( B(X_i', h_{k-1} + 2r_\xi) \cap B(X_j',h_{k-1} + 2r_\xi)\cap \mathcal M \) is bounded from above by \( (1-\epsilon)f_0 \). Let us denote the uniform measure on the manifold by \( \mu \) and the distribution with gap and without noise by \( \mathbb P_\epsilon \). We conclude 
\begin{align*}
 \theta_{ij}^{(k)}& \leq \frac{\mathbb P_\epsilon (B(X_1', r+2r_\xi) \cap B(X_2', r+2r_\xi))}{\mathbb P_\epsilon(B(X_1', r-2r_\xi)\cup B(X_2', r-2r_\xi))}\\
&\leq \frac{(1-\epsilon)f_0 A}{(1-\epsilon)f_0B+\epsilon f_0 C}\\
&= \frac{A}{B} \left(1- \frac{\epsilon C}{(1-\epsilon)B + \epsilon C}\right)
\end{align*}
with 
\begin{align*} 
 &A= \mu(B(X_1', r+2r_\xi)\cap B(X_2', r+2r_\xi)),\\
& B= \mu(B(X_1', r-2r_\xi)\cup B(X_2', r-2r_\xi))\\
 \text{and }&C =\mu(B(X_1', r-2r_\xi)) + \mu(B(M_2', r-2r_\xi)).
\end{align*}
The factor \( \frac{A}{B} \) is bounded from above by \( (1 + \varepsilon_{\mathcal M})(1 + \varepsilon_\xi) q_{ij}^{(k)} \) as shown in the proof of Proposition \ref{lem_gap_coef}. Moreover, \( B<C \) implies that the second factor is bounded from above by \( 1-\epsilon \), providing the upper bound
\[ \theta_{ij}^{(k)} \leq (1-\epsilon) (1 + \varepsilon_{\mathcal M})(1 + \varepsilon_\xi) q_{ij}^{(k)}.\]
Monotonicity of \( q_d \) and the lower bound of the depth \( \epsilon \) of the gap lead to
\begin{align}
\mathfrak q_{ij}^{(k)} - \theta_{ij}^{(k)}& \geq  \left((1 + \varepsilon_{\mathcal M})^{-1}(1 + \varepsilon_\xi)^{-1} - (1-\epsilon)(1 + \varepsilon_{\mathcal M})(1 + \varepsilon_\xi)\right)q_d(b)\nonumber\\
&\geq \left(\left(1+\frac{\epsilon}{7}\right)^{-1}-\left(1-\epsilon\right)\left(1+\frac{\epsilon}{7}\right)\right)q_d(b)\nonumber\\
&\geq\epsilon \frac{q_d(b)}{\sqrt{2}}.\label{ineq_lowerbd_diff_q_theta}
\end{align}                                                                                                                                                                              
Using Pinsker's inequality, we get
\begin{equation} \mathcal K\left( \mathfrak q_{ij}^{(k)}, \theta_{ij}^{(k)}\right)  \geq \epsilon^2 q_d(b)^2.\label{ineq_lowerbd_KL}\end{equation}
As \( \frac{n}{\log n} \geq \frac{2\beta}{ z_k^2} \), we can choose some \( \delta> 0 \) satisfying the inequalities
\begin{align}
 2\delta^2n &\geq \beta \log n \label{ineq_delta_N_Hoeffding}\\
\text{and } \delta n& \leq \frac{z_k n}{2}.\label{ineq_delta_N_notHoeffding}
\end{align}
Note that \(z_k \leq \mathbb P\left(B(X_i, h_{k-1}) \cup B(X_j, h_{k-1})\right) \). Hoeffding's inequality implies in view of \eqref{ineq_delta_N_Hoeffding} 
\begin{equation}
 N_{i\lor j}^{(k)}\geq (z_k-\delta) n \nonumber
\end{equation}
with probability at least \( 1-n^{-\beta} \). This implies together with \eqref{ineq_delta_N_notHoeffding} 
\begin{equation}
 N_{i\lor j}^{(k)}\geq \frac{z_k n}{2} \label{ineq_Hoeffding}
\end{equation}
with probability at least \( 1-n^{-\beta} \). 
On the other hand, by \cite[Lemma 5.1]{AWC} we have 
\begin{equation}
 \mathcal K(\widetilde\theta_{ij}^{(k)}, \theta_{ij}^{(k)}) < \frac{\beta \log n}{N_{i\lor j}^{(k)}}\label{ineq_Lemma_AWC}
\end{equation}
 with probability at least \( 1-2n^{-\beta} \). By the union bound, there exists an event E of probability at least \( 1-3n^{-\beta} \) on which both \eqref{ineq_Hoeffding} and \eqref{ineq_Lemma_AWC} hold. In the following let us fix an outcome of the event E. Then \eqref{ineq_Hoeffding} and \eqref{ineq_Lemma_AWC}  imply
 \begin{equation}
 \mathcal K(\widetilde\theta_{ij}^{(k)}, \theta_{ij}^{(k)}) < \frac{2\beta \log n}{z_k n}\nonumber
 \end{equation}
The assumption \( \frac{\epsilon^2 n}{\log n } \geq 2\alpha z_k^{-1} q_d(b)^{-2} \), \( \alpha > \beta > 0 \), implies
\begin{equation}
\mathcal K(\widetilde\theta_{ij}^{(k)}, \theta_{ij}^{(k)}) < \frac{\beta}{\alpha} \epsilon^2 q_d(b)^2\label{ineq_upperbd_KL_widetilde}.
\end{equation}
Note that \eqref{ineq_lowerbd_diff_q_theta} implies in particular \( \mathfrak q_{ij}^{(k)} > \theta_{ij}^{(k)} \). Since the function \( \mathcal K(\cdot, \theta) \) is strictly monotone on the interval \( [\theta, 1) \) and considering \( \frac{\beta}{\alpha} < 1 \), we conclude from \eqref{ineq_lowerbd_KL} and \eqref{ineq_upperbd_KL_widetilde}
\begin{equation}
 \widetilde \theta_{ij}^{(k)} < \mathfrak q_{ij}^{(k)}.\label{ineq_significant_gapcoeff}
\end{equation}
The triangle inequality and Pinsker's inequality yield
\begin{align}
 |\widetilde\theta_{ij}^{(k)}- \mathfrak q_{ij}^{(k)}| & \geq  |\theta_{ij}^{(k)}-\mathfrak q_{ij}^{(k)}|-|\widetilde\theta_{ij}^{(k)}-\theta_{ij}^{(k)}|\nonumber\\
 &\geq \epsilon\frac{q_d(b)}{\sqrt{2}}-\sqrt{\frac{1}{2}\mathcal K(\widetilde\theta_{ij}^{(k)}, \theta_{ij}^{(k)})}\nonumber\\
 &\numgeq{\ref{ineq_upperbd_KL_widetilde}} \epsilon \frac{q_d(b)}{\sqrt{2}} \left(1-\sqrt{\frac{\beta}{\alpha}}\right)\label{ineq_lower_diff_widetildetheta}
\end{align}
From Pinsker's inequality and the assumption \( \frac{\epsilon^2 n}{\log n } \geq 2\alpha z_k^{-1} q_d(b)^{-2} \) we deduce
\begin{align}
\mathcal K(\widetilde\theta_{ij}^{(k)},\mathfrak q_{ij}^{(k)})&\geq 2\left(\widetilde\theta_{ij}^{(k)}- \mathfrak q_{ij}^{(k)}\right)^2\nonumber\\
&\numgeq{\ref{ineq_lower_diff_widetildetheta}} \epsilon^2q_d(b)^2 \left(1-\sqrt{\frac{\beta}{\alpha}}\right)^2 \nonumber\\
&\geq \frac{\log n}{z_k n}2\alpha\left(1-\sqrt{\frac{\beta}{\alpha}}\right)^2\nonumber\\
&\numgeq{\ref{ineq_Hoeffding}} \frac{\log n}{N_{i\lor j}^{(k)}}\left(\sqrt{\alpha}-\sqrt{\beta}\right)^2\label{ineq_lowerbd_KL_tildetheta_q}
\end{align}
Finally, putting together \eqref{ineq_significant_gapcoeff} and \eqref{ineq_lowerbd_KL_tildetheta_q}, we conclude that any outcome of the event \( E \) satisfies
\begin{align*}
 T_{ij}^{(k)} &= N_{i\lor j}^{(k)}\mathcal K(\widetilde\theta_{ij}^{(k)}, \mathfrak q_{ij}^{(k)})\{\mathds 1({\widetilde\theta_{ij}^{(k)}< \mathfrak q_{ij}^{(k)}})-\mathds 1({\widetilde\theta_{ij}^{(k)}\geq \mathfrak q_{ij}^{(k)}})\} \\
 & \geq \left(\sqrt{\alpha}-\sqrt{\beta}\right)^2 \log n.
\end{align*}
The choice of \(x_i\) and \(x_j\) is irrelevant for this result, so it is also valid in the unconditional form.
\end{proof}

\begin{proof}[Proof of Theorem \ref{thm_rate}]
Let us denote the value of the constant density under the null hypothesis by \( f_0 \) and the Kullback-Leibler divergence by \( \mathcal D_{\text{KL}}(\cdot, \cdot) \).
Using \( 1=f_G|G|+f_V|V| \), we compute
\begin{align*}
 f_V& =\frac{1}{|G|+|V| - \delta |G|} \text{ and}\\
 f_G&=\frac{1-\delta}{|G|+|V| - \delta|G|}.
\end{align*}
Additivity of the Kullback-Leibler divergence and \( f_0 = \frac{1}{|V|+|G|} \) yields
\begin{align*}
 n^{-1} \mathcal D_{\text{KL}} (\mathbb P_0, \mathbb P_1)&= f_0 |G| \log \frac{f_0}{f_G} + f_0 |V| \log \frac{f_0}{f_V}\\
 &=\log\left(1-\delta \frac{|G|}{|G|+|V|}\right) - \frac{|G|}{|G|+|V|} \log(1-\delta)\\
 &= \frac{\delta^2}{2} \frac{|G|}{|G|+|V|}\left(1+\frac{|G|}{|G|+|V|}\right) + o(\delta^2),
\end{align*}
the latter follows from the Taylor expansion. As \( \mathcal D_{\text{KL}} (\mathbb P_0, \mathbb P_1)\to\infty \) is a necessary condition for consistent testing \cite[Section 2.4.2]{tsybakov_nonparametric_statistics}, we deduce that no test is able to separate the two cases consistently provided that \( n\delta^2\nrightarrow\infty \) as \( n\to\infty \).
\end{proof}

Before we prove Lemma \ref{lem_plane_monotonicity}, let us introduce the so-called \emph{general volume coefficient}.
\begin{defn}
Suppose \(r_1, r_2>0\), \(D\in\mathbb Z_{>0}\), \(M_1 = (0, \dots, 0) \in\mathbb R^D\) and \(M_2 = (1, 0, \dots, 0)\in\mathbb R^D\). By \(\lambda_D\) we denote the \(D\)-dimensional Lebesgue measure and \(B_D(\cdot, \cdot)\) denotes an euclidean Ball in \(\mathbb R^D\) with given center and radius. We define the \emph{\(D\)-dimensional general volume coefficient} by
\begin{align*}
q_D\left(r_1, r_2\right)\defined \frac{\lambda_D\left(B_D\left(M_1, r_1\right)\cap B_D\left(M_2, r_2\right)\right)}{\lambda_D\left(B_D\left(M_1, r_1\right)\cup B_D\left(M_2, r_2\right)\right)}
\end{align*}
\end{defn}
\begin{lem}\label{lem_general_formula_general_volume_coef}
For \(M_1\neq M_2\in\mathbb R^D\) and \(r_1, r_2>0\) we have
  \[ \frac{\lambda_D(B_D(M_1, r_1) \cap B_D(M_2, r_2))}{\lambda_D(B_D(M_1, r_1) \cup B_D(M_2, r_2))} = q_D\left(\frac{r_1}{\|M_1 - M_2\|}, \frac{r_2}{\|M_1 - M_2\|}\right) \]
\end{lem}
\begin{proof}
 This follows from the invariance of the quotient of two \(D\)-dimensional volumes under rotation, translation and uniform scaling.
\end{proof}

  \begin{lem}\label{lem_exact_formula_general_volume_coeff} Suppose \(r_1, r_2>0\). Using the usual order of arguments we denote the regularized incomplete beta function by \(I_\cdot(\cdot, \cdot)\). Then
\[q_D(r_1, r_2) = \begin{cases}
                   0 & ,r_1 + r_2 \leq 1 \\
               \left(  \frac{r_j}{r_i} \right)^D&, r_i - r_j \geq 1 \\
               \frac{ r_1 + r_2 - 1}{r_1 + r_2 + 1}&,D=1 \text{ and }r_1+r_2> 1\text{ and }| r_1-r_2| < 1\\
                   \frac{V_D^{\text{cap}}(r_1, r_2)+ V_D^{\text{cap}}(r_2, r_1) }{V^{\text{ball}}_D(r_1) + V_D^{\text{ball}}(r_2)-V_D^{\text{cap}}(r_1, r_2)- V_D^{\text{cap}}(r_2, r_1) } &,\text{otherwise}\\
                  \end{cases}
 \]
with
\begin{align*}
  V_D^\text{ball} (r_i) &= 2r_i^D\\
   V_D^{\text{cap}}(r_i, r_j) & = \begin{cases}
   r_i^DI_{1 - \left(\frac{1+r_i^2 - r_j^2}{2r_i}\right)^2} \left(\frac{D+1}{2}, \frac{1}{2}\right) & ,r_j^2 - r_i^2 \leq 1\\
                                   2r_i^D - r_i^D I_{1 - \left(\frac{1+r_i^2 - r_j^2}{2r_i}\right)^2}\left(\frac{D+1}{2}, \frac{1}{2}\right) & ,r_j^2 - r_i^2 > 1
                                  \end{cases}
\end{align*}
\end{lem}

\begin{figure}[H]
 \centering
 \includegraphics[scale = 0.06]{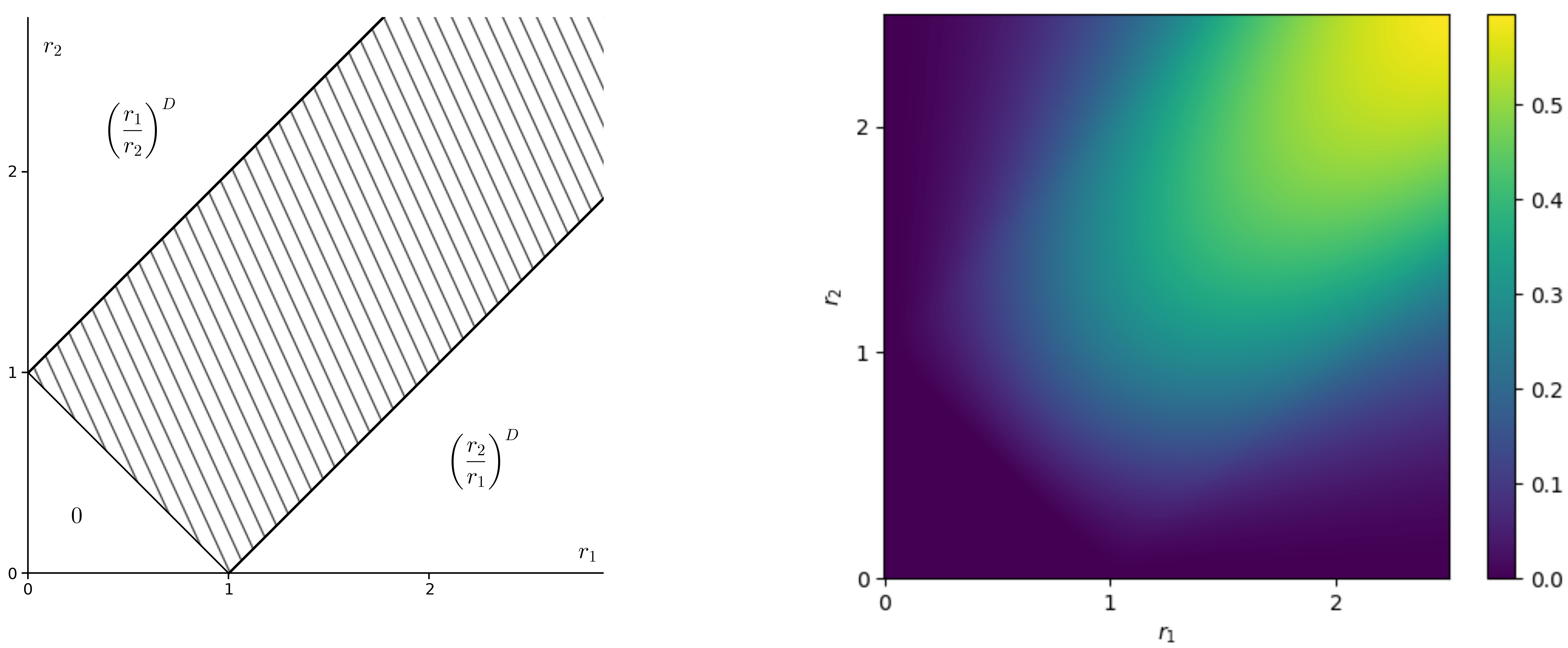}
 \caption{Left: Different regimes for formula of \(q_D(r_1, r_2)\) given in Lemma \ref{lem_exact_formula_general_volume_coeff}.\newline Right: Plot of \(q_2(r_1, r_2)\)}
\end{figure}

\begin{proof}
We only discuss the nontrivial regime where \(r_1+r_2 > 1\) and \(|r_2-r_1|<1\) for \(D>1\). Then the overlap of the two corresponding spheres with radii \(r_1\) and \(r_2\) around \(M_1=(0, \dots, 0)\) and \(M_2=(1, 0, \dots, 0)\) contains two points of the form \((x, \pm y, 0, \dots, 0)\). The coordinate equations of the two spheres yield
\begin{align*}
 x^2 + y^2 & = r_1^2\\
 (x-1)^2 + y^2 &=r_2^2
\end{align*}
implying
\begin{align*}
 x &= \frac{1+r_1^2 - r_2^2}{2}\\
 y&= \pm r_1\sqrt{1 - \left(\frac{1+r_1^2-r_2^2}{2r_1}\right)^2}
\end{align*}
We denote the smaller angle between the $x$-axis and the line through $M_1$ and $(x, y)$ by $\phi_1$. Analogously we define $\phi_2$, c.f. Figure \ref{fig_angle_cases}.
\begin{figure}[H]
                 \includegraphics[width=\linewidth]{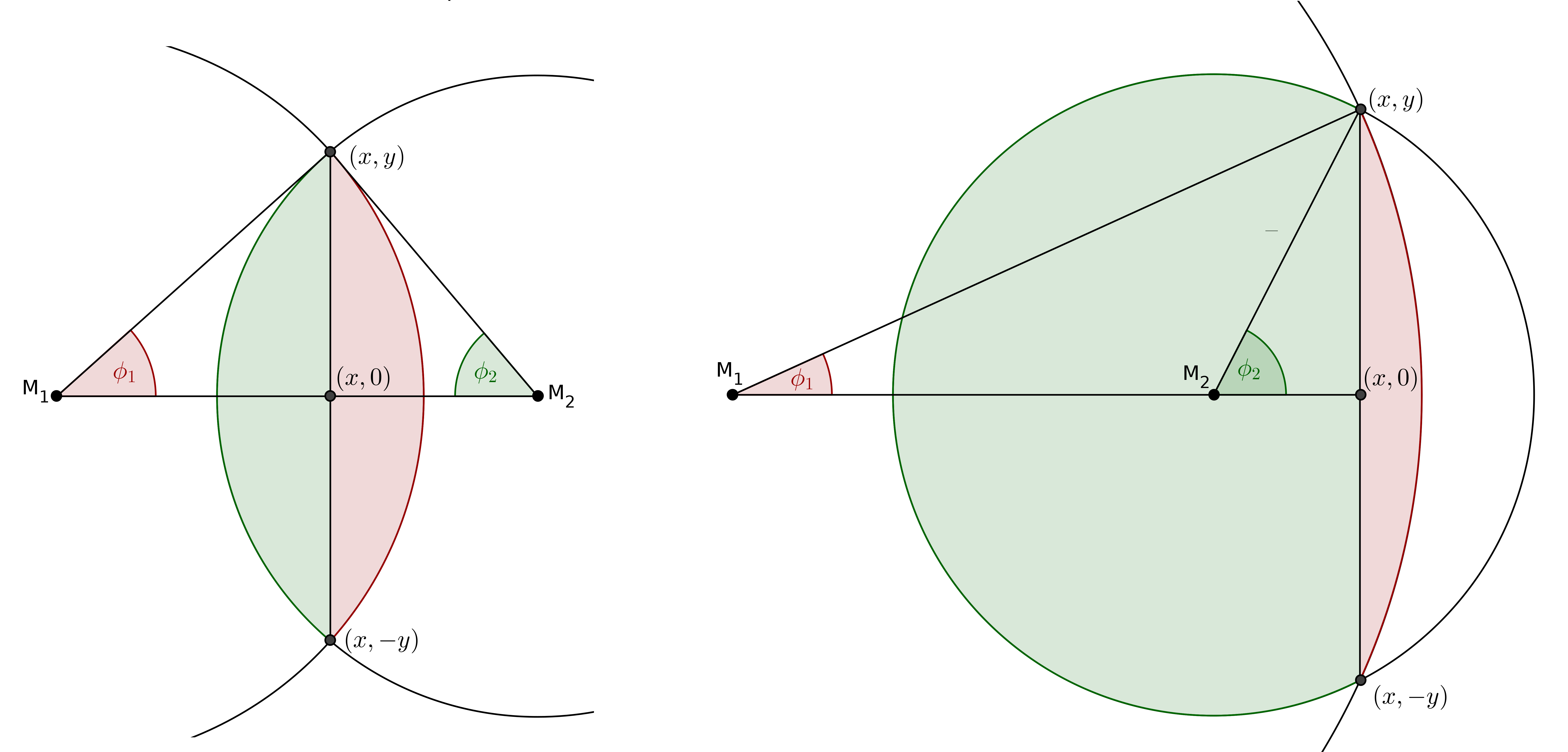}
 \caption{The volume of the overlap of two balls is the sum of the volumes of two caps that are shown in green and red. The corresponding angles used in the formulas of these volumes are highlighted in the same colour. On the left, we see the case $0<x<1$, whereas on the right $x>1$.} 
 \label{fig_angle_cases}
  \end{figure}
We conclude 
\begin{align*}
 \sin^2 \phi_1 &= \left(\frac{|y|}{r_1}\right)^2\\
 &= 1 - \left(\frac{1+r_1^2-r_2^2}{2r_1}\right)^2
\end{align*}
and
\begin{align*}
 \sin^2 \phi_2 & =  \left(\frac{|y|}{r_2}\right)^2\\
 &= \frac{r_1^2}{r_2^2} - \left(\frac{1+r_1^2-r_2^2}{2r_2}\right)^2\\
 & = 1 - \left(\frac{1+r_2^2-r_1^2}{2r_2}\right)^2
\end{align*}
Note that $x < 1$ is equivalent to $r_1^2 - r_2^2 < 1$ and $x > 0$ is equivalent to $r_2^2-r_1^2<1$. Using the formula for the volume of a hyperspherical cap given in \cite{formula_cap}, we conclude
\begin{align*}
 \lambda_D\left(B_D(M_1, r_1)\cap B_D(M_2, r_2)\right)& = \frac{\pi^{\frac{D}{2}}}{2\Gamma\left(\frac{D}{2}+1\right)}\left(V_D^\text{cap}(r_1, r_2) +V_D^\text{cap}(r_2, r_1)\right)\\
 \lambda_D\left(B_D(M_1, r_1)\cup B_D(M_2, r_2)\right)& = \frac{\pi^{\frac{D}{2}}}{2\Gamma \left(\frac{D}{2}+1\right)}\left( V^{\text{ball}}_D(r_1) + V_D^{\text{ball}}(r_2)-V_D^{\text{cap}}(r_1, r_2)- V_D^{\text{cap}}(r_2, r_1)\right)
\end{align*}
\end{proof}

\begin{proof}[Proof of Lemma \ref{lem_plane_monotonicity}]
Since $\mathcal Q$ is contineous and $\mathcal Q(t)=0$ for $t>t'_\text{max} \defined  \sup \{t : (H + tv) \cap B(M_1, r) \cap B(M_2, r)\neq \{\}\}$, we only need to dicuss the case $0<t< t'_\text{max}$. As $\mathcal Q(t)$ is contineous w.r.t. rotation of $H$ around the point $\frac{M_1+M_2}{2}$, we can w.l.o.g. assume that the vector $M_1-M_2$ is neither parallel nor orthogonal to $H$. Moreoever, we assume w.l.o.g. that there exists $d>0$ such that $M_1\in H + dv$. As a nontrivial intersection of a ball in $\mathbb R^{D+1}$ with a hyperplane is a $D$-dimensional ball, we can rewrite $\mathcal Q(t)$ using Lemma \ref{lem_general_formula_general_volume_coef}. The corresponding radii can be easily computed using Pythagoras' theorem, c.f. Figure \ref{fig_radii}. We get
 \begin{align*}
 \mathcal Q(t)&= q_D\left(r_1(t), r_2(t)\right)\\
  \text{with }r_1(t) & =\sqrt{\frac{1 - (t-d)^2}{\|M_1 - M_2\|^2 - 4d^2}} \\
 \text{and }r_2(t) &=\sqrt{\frac{1 - (t+d)^2}{\|M_1 - M_2\|^2 - 4d^2}}
 \end{align*}
  \begin{figure}[H]
  \includegraphics[scale =0.38]{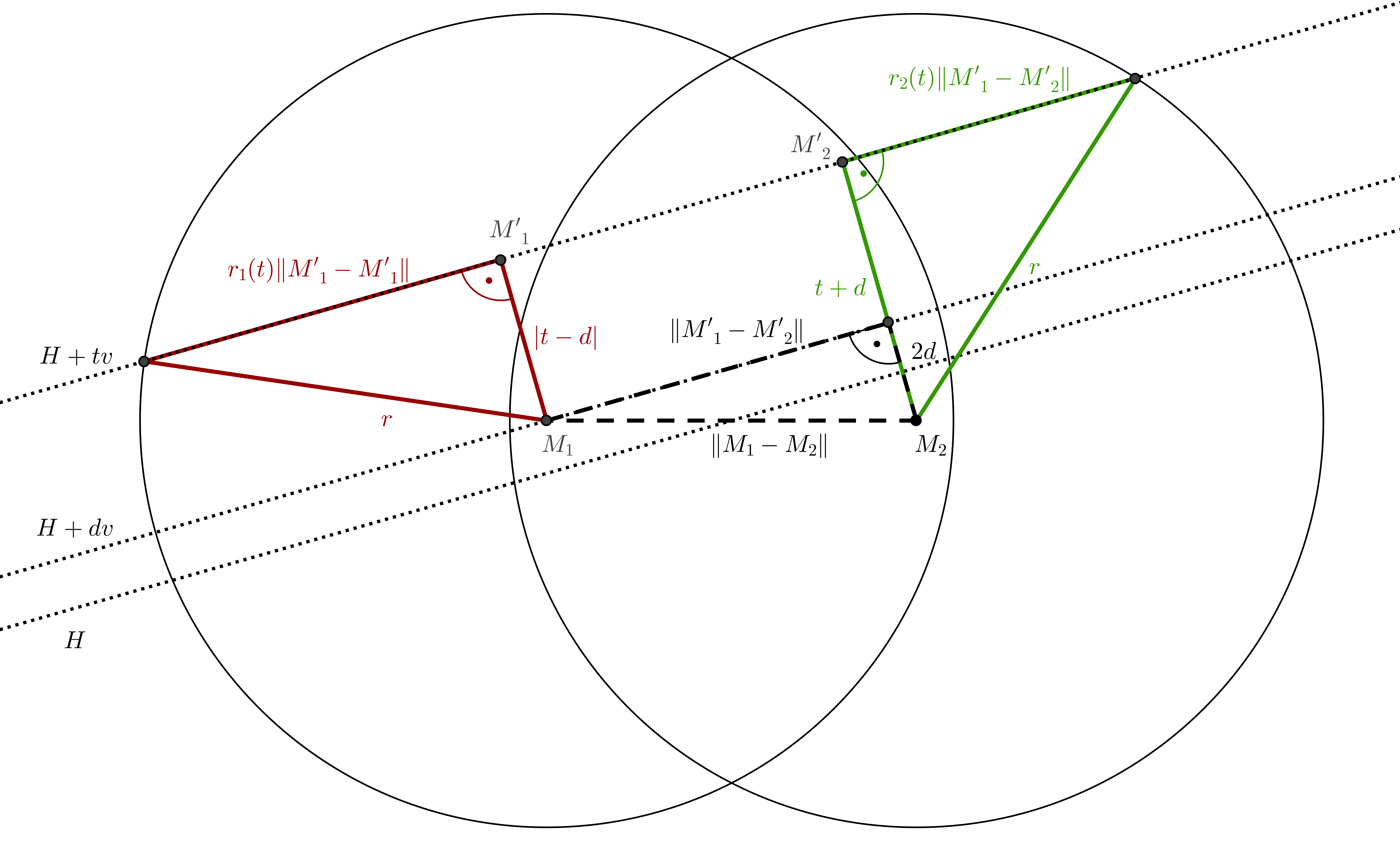}
         \caption{We denote $M_1+(t-d)v$ by $M_1'$ and $M_2+(t+d)v$ by $M_2'$. These are the center points of the two $D$-dimensional balls that form the intersection of the original $(D+1)$-dimensional balls with $H+tv$. According to Pythagoras' theorem, their radii are given by $(r^2 - (t-d)^2)^{\nicefrac{1}{2}}$ and $(r^2 - (t+d)^2)^{\nicefrac{1}{2}}$, whereas the distance between the center points is $\|M'_1-M'_2\| = (\|M_1-M_2\|^2 - 4d^2)^{\nicefrac{1}{2}}$.}
         \label{fig_radii}
\end{figure}
We have
\begin{align*}
\frac{d}{dt} r_1(t)&=\frac{-t+d}{\sqrt{1-(t-d)^2}\sqrt{\|M_1-M_2\|^2-(2d)^2}} \\
\frac{d}{dt} r_2(t)&=\frac{-t-d}{\sqrt{1-(t+d)^2}\sqrt{\|M_1-M_2\|^2-(2d)^2}}
\end{align*}
We oberserve the following relations between $r_1$ and $r_2$:
\begin{align}
 r_1(t) & > r_2(t)\label{ineq_r1r2}\\
\diff{t} r_2(t) &< 0 \label{ineq_r2} \\
 \left|\diff{t} r_1(t) \right| &<  \left|\diff{t} r_2(t) \right|  \label{ineq_r1r2_dt}
\end{align}
First, let us discuss the case when there exists an open environment $I$ containing $t$, such that $\mathcal Q(t') = r_1(t')^{-D} r_2(t')^D$ for all $t'\in I$. We conclude from \eqref{ineq_r1r2}, \eqref{ineq_r2} and \eqref{ineq_r1r2_dt}
\begin{align*}
 \diff{t} \mathcal Q(t) &= D \left(\frac{r_2(t)}{r_1(t)}\right)^{D-1} \frac{r_1(t) \left(\diff{t} r_2(t)\right) - \left(\diff{t} r_1(t) \right) r_2(t)}{r_1(t)^2 } \\
 & < 0
\end{align*}
Next, let us consider that case where $\mathcal Q = (r_1+r_2-1) (r_1+r_2+1)^{-1}$ on an open interval containing $t$. Again, we conclude from \eqref{ineq_r2} and \eqref{ineq_r1r2_dt}
\begin{align*}
 \diff{t} \mathcal Q(t) & =2 \frac{\diff{t} r_1(t) + \diff{t} r_2(t)}{(r_1+r_2+1)^2} \nonumber\\
 & < 0
\end{align*}
Finally, consider the case where $D>2$ and on an open environment around $t$ we have
 \begin{align}
  \mathcal Q &=q_D(r_1, r_2) \nonumber\\
  &=  \frac{V_D^{\text{cap}}(r_1, r_2)+ V_D^{\text{cap}}(r_2, r_1) }{V^{\text{ball}}_D(r_1) + V_D^{\text{ball}}(r_2)-V_D^{\text{cap}}(r_1, r_2)- V_D^{\text{cap}}(r_2, r_1) }\label{eq_analytic_formula_gap_coeff} 
 \end{align}
 for $V_D^\text{ball}(\cdot)$ and $V_D^\text{cap}(\cdot, \cdot)$ defined as in Lemma \ref{lem_exact_formula_general_volume_coeff}.
The terms $V^{\text{ball}}_D(\cdot)$ and $V_D^{\text{cap}}(\cdot, \cdot)$ denote the volume of the respective balls and caps up to the constant 
\[c  = \frac{2\Gamma\left(\frac{D}{2}+1\right)}{\pi^{\frac{D}{2}}}\] Recall that the derivative of the volume of a ball w.r.t. its radius is given by the surface area of the corresponding sphere. In particular, we have
\begin{align*}
  \diff{r_i} V_D^\text{ball}(r_j) = \begin{cases}
                                     c A^{\text{sphere}}_D(r_i) & ,i=j\\
                                     0 & ,i\neq j
                                    \end{cases}
\end{align*}
with $A^{\text{sphere}}_D(\cdot)$ denoting the surface area of a $D$-dimensional sphere with given radius. 
Similarly, it can be shown that the partial derivatives of the volume of the overlap $c^{-1} ( V_D^\text{cap}(r_1, r_2)+V_D^\text{cap}(r_2, r_1))$ w.r.t. $r_1$ and $r_2$ are up to the same constant given by the surface areas $A_D^\text{cap}(r_1, r_2)$ and $A_D^\text{cap}(r_2, r_1)$ of the the corresponding hyperspherical caps that form together the boundary of the overlap, c.f. Figure \ref{fig_partial}.
\begin{figure}[H]
\centering
 \includegraphics[scale = 0.22]{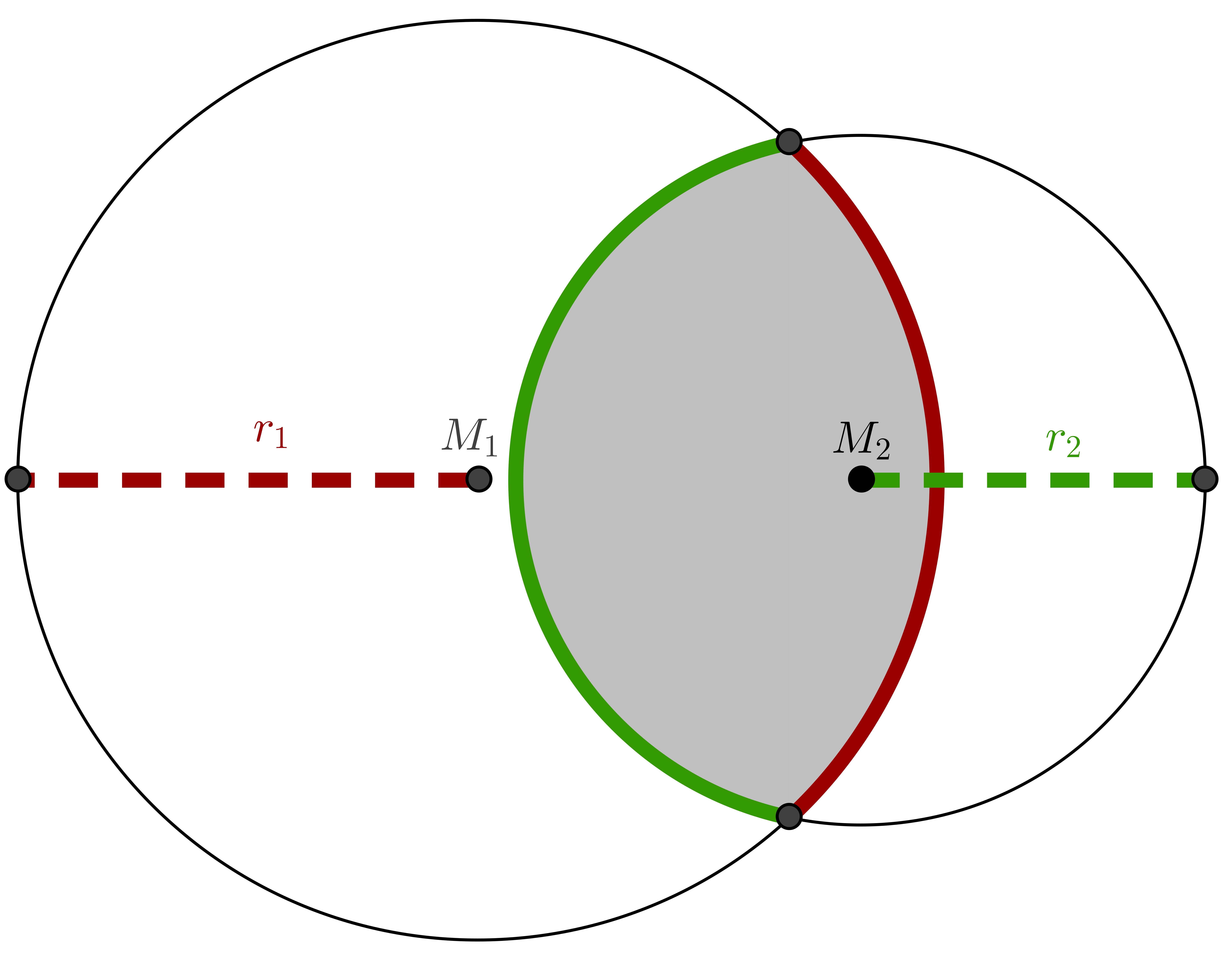}
 \caption{Case $D=2$: The derivative of the area of the intersection of the two balls (gray) w.r.t. $r_1$ ($r_2$) is given by the red (green) arc length
 }
 \label{fig_partial}
\end{figure}

Exact formulas for $A_D^\text{cap}(\cdot, \cdot)$ are given in \cite{formula_cap}. However, those are not needed for this proof. It is enough to observe the following relation 
\begin{align*}
 A_D^\text{cap}(r_1, r_2)< A_D^\text{cap}(r_2, r_1)
\end{align*}
as a consequence of \eqref{ineq_r1r2}. Let us introduce the notations 
\begin{align*}
 S^\text{ball} &\defined V^{\text{ball}}_D(r_1) + V_D^{\text{ball}}(r_2)\\
  S^\text{cap} &\defined V^{\text{cap}}_D(r_1, r_2) + V_D^{\text{cap}}(r_2, r_1)
\end{align*}
We conclude
\begin{align}
\text{and } \diff{r_1} S^\text{ball} & > \diff{r_2}S^\text{ball}\label{ineq_partial_ball}\\
\diff{r_1} S^\text{cap} &< \diff{r_2}S^\text{cap}\label{ineq_partial_cap}
\end{align}
In view of
\begin{align*}
 \diff{r_i}  q_D(r_1, r_2) &= \frac{\left(\diff{r_i} S^\text{cap}\right) S^\text{ball} - S^\text{cap}\left(\diff{r_i} S^\text{ball}\right)}{\left(S^\text{ball}-S^\text{cap}\right)^2}
\end{align*}
we conclude from \eqref{ineq_partial_ball} and \eqref{ineq_partial_cap}
\begin{align}
\diff{r_1} q_D  < \diff{r_2} q_D \label{ineq_partial_1}
\end{align}
Note that increasing the radii $r_1$ and $r_2$ by a common factor $C>1$ has the same effect on the coefficient of the volumes of the intersection and the union of the two corresponding balls as when moving the center point $M_2$ such that $\|M_1-M_2\|$ decreases by a factor $C^{-1}$. Considering Lemma \ref{lem_general_formula_general_volume_coef}, we observe  
\begin{align*}
q_D(Cr_1, Cr_2) > q_D(r_1, r_2)
\end{align*}
for $C>1$. This implies
\begin{align}
 r_1 \diff{r_1} q_D + r_2 \diff{r_2} q_D > 0\label{ineq_partial_2}
\end{align}
From \eqref{ineq_partial_1} and \eqref{ineq_partial_2} we deduce
\begin{align}
0 & <  \diff{r_2} q_D\label{ineq_partial_3}\\
 \text{and }\left|\diff{r_1} q_D\right| & < \diff{r_2} q_D\label{ineq_partial_4}
\end{align}
Note that $q_D(r_1, r_2)$ is differentiable at $(r_1(t), r_2(t))$ as the formula given in \eqref{eq_analytic_formula_gap_coeff} is valid on open environment. From \eqref{ineq_r2}, \eqref{ineq_r1r2_dt}, \eqref{ineq_partial_3} and \eqref{ineq_partial_4} we conclude
\begin{align*}
 \diff{t} \mathcal Q(t) &=\diff{t}r_1(t) \diff{r_1}q_D(r_1(t), r_2(t)) + \diff{t}r_2(t) \diff{r_2}q_D(r_1(t), r_2(t)) \\
 & < 0
\end{align*}
Lastly, $r_1(t)-r_2(t)$ is strictly monotonely increasing on $(0, t'_\text{max})$. In view of Lemma \ref{lem_exact_formula_general_volume_coeff}, this implies that $\mathcal Q(t)$ is differentiable on $(0, t'_\text{max}) \setminus S$ with a negative derivative for some finite subset $S\subset (0, t'_\text{max})$. The function $\mathcal Q$ is contineous on $\mathopen{[} 0, t_\text{max}\mathclose{)}$. Consequently, it is also monotonely decreasing.
\end{proof}
\begin{proof}[Lemma \ref{lem_half_space_coeff}]
The case $D=1$ is trivial. Let us assume $D>1$.
We prove the lemma by contradiction, i.e. we assume that there exists a counterexample such that
\begin{align}
 \frac{\lambda(\mathcal H\cap B(M_1, r) \cap B(M_2, r))}{\lambda(\mathcal H \cap (B(M_1, r) \cup B(M_2, r)))}<\frac{\lambda( B(M_1, r) \cap B(M_2, r))}{\lambda (B(M_1, r) \cup B(M_2, r))}  \label{ineq_half_spaces_contradition}
\end{align}
We can choose $\mathcal H$ such that for any other half-space of the form ${\mathcal H}' = \mathcal H + v'$ for some $v' \in\mathbb R^{D}$ containing $M_1$ and $M_2$ we have 
\begin{align}
  \frac{\lambda\left({ \mathcal H}\cap B(M_1, r)\cap B(M_2, r)\right)}{\lambda({\mathcal H}\cap (B(M_1, r)\cup B(M_2, r)))} \leq \frac{\lambda\left({ \mathcal H'}\cap B(M_1, r)\cap B(M_2, r)\right)}{\lambda({\mathcal H'}\cap (B(M_1, r)\cup B(M_2, r)))} \label{ineq_half_space_minimal_assumption}
\end{align}
There exists a unique half-space $\mathcal H_0$ whose boundary $H_0$ contains $\frac{M_1+M_2}{2}$ and is parallel to the boundary of $\mathcal H$. Note that by symmetricity,
\begin{align}
 \frac{\lambda\left(\mathcal H_0 \cap B(M_1, r)\cap B(M_2, r)\right)}{\lambda\left(\mathcal H_0\cap (B(M_1, r)\cup B(M_2, r))\right)} =  \frac{\lambda\left( B(M_1, r)\cap B(M_2, r)\right)}{\lambda\left(B(M_1, r)\cup B(M_2, r)\right)} \label{ineq_half_space_1}
\end{align} 
There exists a unique vector $v$ of norm 1 that is orthogonal to $H_0$ such that $\frac{M_1+M_2}{2}\in \mathcal H_0+v$. Moreover, for $t_\text{max}\defined \sup \{t : (H_0 + tv) \cap (B(M_1, r) \cup B(M_2, r))\neq \{\}\}$, there exists a unique $t_{\mathcal H} \in(0, t_\text{max})$ such that $\mathcal H = \mathcal H_0+t_{\mathcal H} v$. Let us denote the $(D-1)$-dimensional Lebesgue measure by $\lambda_{D-1}$. According to Fubini's theorem we have
\begin{align}
  &\frac{\lambda\left(\mathcal H\cap B(M_1, r)\cap B(M_2, r)\right)}{\lambda\left(\mathcal H\cap (B(M_1, r)\cup B(M_2, r))\right)} \nonumber \\
  =  & \frac{\lambda\left(\mathcal H_0 \cap B(M_1, r)\cap B(M_2, r)\right) + \int_0^{t_{\mathcal H}} \lambda_{D-1} ((H_0+tv)\cap B(M_1, r)\cap B(M_2, r)    ) dt}{\lambda\left(\mathcal H_0 \cap (B(M_1, r)\cup B(M_2, r))\right) + \int_0^{t_{\mathcal H}} \lambda_{D-1} ((H_0+tv)\cap (B(M_1, r)\cup B(M_2, r)   ) ) dt} \label{ineq_half_space_2}
\end{align}
From \eqref{ineq_half_spaces_contradition}, \eqref{ineq_half_space_1}, \eqref{ineq_half_space_2} and the monotonicity described in Lemma \ref{lem_plane_monotonicity}, we conclude
\begin{align}
 \frac{\lambda_{D-1}((H_0 + t_{\mathcal H}v)\cap B(M_1, r) \cap B(M_2, r))}{\lambda_{D-1}((H_0 + t_{\mathcal H}v) \cap (B(M_1, r) \cup B(M_2, r)))} <  \frac{\lambda\left(\mathcal H\cap B(M_1, r)\cap B(M_2, r)\right)}{\lambda\left( \mathcal H \cap (B(M_1, r)\cup B(M_2, r))\right)} \label{ineq_half_space_3}
\end{align}
Suppose $t'\in(t_{\mathcal H}, t_\text{max})$. Then 
\begin{align}
  &\frac{\lambda\left((\mathcal H_0 +t'v)\cap B(M_1, r)\cap B(M_2, r)\right)}{\lambda\left((\mathcal H_0 +t'v)\cap (B(M_1, r)\cup B(M_2, r))\right)} \nonumber \\
  =  & \frac{\lambda\left( \mathcal H \cap B(M_1, r)\cap B(M_2, r)\right) + \int_{t_{\mathcal H}}^{t'} \lambda_{D-1} ((H_0+tv)\cap B(M_1, r)\cap B(M_2, r)    ) dt}{\lambda\left(\mathcal H \cap (B(M_1, r)\cup B(M_2, r))\right) +  \int_{t_{\mathcal H}}^{t'} \lambda_{D-1} ((H_0+tv)\cap (B(M_1, r)\cup B(M_2, r)   ) ) dt} \label{ineq_half_space_4}
\end{align}
From \eqref{ineq_half_space_3}, \eqref{ineq_half_space_4} and Lemma \ref{lem_plane_monotonicity} we deduce 
\begin{align*}
 \frac{\lambda\left((\mathcal H_0 +t'v)\cap B(M_1, r)\cap B(M_2, r)\right)}{\lambda\left((\mathcal H_0 +t'v)\cap (B(M_1, r)\cup B(M_2, r))\right)} < \frac{\lambda\left(\mathcal H\cap B(M_1, r)\cap B(M_2, r)\right)}{\lambda\left(\mathcal H\cap (B(M_1, r)\cup B(M_2, r))\right)} 
\end{align*}
This is a contradiction to \eqref{ineq_half_space_minimal_assumption}.
\end{proof}

Before proving Proposition \ref{lem_gap_coeff_boundary}, we state the following generalization of Lemma \ref{lem_volume_lipschitz}. We denote the Lebesgue measure on a submanifold of \(\mathbb R^D\) by \(\lambda\).
\begin{lem}\label{lem_Lipschitz_integral}
 For a $C$-Lipschitz function $f_1:\mathcal M_1\to \mathcal M_2$ between two $d$-dimensional submanifolds of $\mathbb R^D$ and a measurable function $f_2$ on $\mathcal M_2$ we have
\begin{align*}
 \int_{\mathcal M_2} f_2 d\lambda  \leq C^d \int_{\mathcal M_1} f_2\circ f_1 d\lambda
\end{align*}
\end{lem}
\begin{proof}
 This follows from Lemma \ref{lem_volume_lipschitz} together with the definition of the Lebesgue integral of a positive function as a supremum of integrals of step functions.
\end{proof}

\begin{proof}[Proof of Proposition \ref{lem_gap_coeff_boundary}]
W.l.o.g. we consider only one cluster \(\mathcal C=\mathcal C_1\) and assume \(f\propto \mathbbm 1(\mathcal C)\). If the set of all possible superlevel sets is finite, the general result follows by summation. In case that this set is infinite, e.g. if $f$ is smooth and not constant, $f$ can be constructed as the limit of discrete functions.

Moving on, consider $M_i'\in\mathcal C$ of distance at most $r_\xi$ to $M_i$. Moreover, let us denote the projection on the tangent plane $\mathcal T$ to $\mathcal M$ at $M_1'$ by $P$. Depending on the context, we denote by \(\lambda\) either the Lebesgue measure on \(\mathcal M\) or a linear space such as the tangent space. We apply Lemma \ref{lem_local_lipschitz_param}. For $r\kappa \leq (120)^{-1} $, the projection $P$ is injective on the Ball $B(M_1', 3r)$ with an inverse that is Lipschitz with constant $L\defined 1+360\kappa^2 r^2$. Note that this ball contains $B(M_1', r+2r_\xi)\cup B(M_2', r+2r_\xi)$. From Lemma \ref{lem_Lipschitz_integral} we conclude
\begin{align*}
q_{\mathbb P}=\frac{\mathbb P(B(M_1, r)\cap B(M_2, r))}{\mathbb P(B(M_1, r)\cup B(M_2, r))}&\geq  \frac{\mathbb P_0(B(M_1', r-2r_\xi)\cap B(M_2', r-2r_\xi))}{\mathbb P_0(B(M_1', r+2r_\xi)\cup B(M_2', r+2r_\xi))}\\
&\geq L^{-d} \frac{\int_{T\cap B(P(M_1'), \frac{r-2r_\xi}{L})\cap B(P(M_2'), \frac{r-2r_\xi}{L})} f \circ P^{-1} d\lambda}{\int_{T \cap (B(P(M_1'), r+2r_\xi)\cup B(P(M_2'), r+2r_\xi))} f \circ P^{-1} d\lambda},
\end{align*}
where \(\mathbb P_0\) denotes the noiseless distribution. Moreover, we can rewrite the integral using the push-forward measure $(P\vert_{B(M_1', 3r)}^{-1})_\ast  (\mathbb P_0)$. For simplicity we just use the notation $P^{-1}_\ast \mathbb P_0$ as well as $Z_i\defined P(M_i')$.
We get the lower bound 
\begin{align*}
q_{\mathbb P} &\geq L^{-d} AB\\
A &= \frac{P^{-1}_\ast \mathbb P_0 \left(T \cap B(Z_1, \frac{r-2r_\xi}{L}) \cap B(Z_2, \frac{r-2r_\xi}{L})\right)}{P^{-1}_\ast \mathbb P_0 \left(T \cap (B(Z_1, \frac{r-2r_\xi}{L}) \cup B(Z_2, \frac{r-2r_\xi}{L}))\right)}\\
B &= \frac{P^{-1}_\ast \mathbb P_0 \left(T \cap (B(Z_1, \frac{r-2r_\xi}{L}) \cup B(Z_2, \frac{r-2r_\xi}{L}))\right)}{P^{-1}_\ast \mathbb P_0 \left(T \cap (B(Z_1, r+2r_\xi) \cup B(Z_2, r+2r_\xi))\right)}
\end{align*}
WLOG we assume that $P(\mathcal C)$ does not fully contain the intersection in term A. Then there exists $p\in P(\partial \mathcal C)\cap B(Z_1, \frac{r-2r_\xi}{L})\cap B(Z_2, \frac{r-2r_\xi}{L})$. Consider a ball of radius $2r$ around $P^{-1}(p)$ and let's denote by $\mathcal T'$ the tangent plane of dimension $d-1$ to $\partial \mathcal C$ at $P^{-1}(p)$. If $\kappa' r\leq 80^{-1}$ the inverse of the restriction (to the ball around $P^{-1}(p)$) of the projection of $\partial \mathcal  C$ to $\mathcal T'$ is $L_{\mathcal C}\defined 1+160(\kappa' r)^2$-Lipschitz. By Pythagoras theorem the distance of $\partial \mathcal C$ to $\mathcal T'$ inside the considered Ball is bounded from above by
\begin{align*}
2r \sqrt{L_{\mathcal C}^2 - 1}& =2r \sqrt{320 (\kappa' r )^2+160^2 (\kappa' r)^4}\\
 &\leq 2 \sqrt{324} \kappa' r^2\\
 &=36 \kappa' r^2
\end{align*}
 As the projection onto $\mathcal T$ is $1$-Lipschitz, also the distance of $P(\partial \mathcal C) \cap B(p, \frac{2r}{L})$ to $P(\mathcal T')$ is bounded by the same term. I. p. there exists half-planes $H_2\subset H_1$ of dimension $d$ in $\mathcal T$  whos boundaries are parallel at a distance $72 \kappa' r^2$ and 
\begin{align*}
 H_2\cap B(p, \frac{2r}{L}) \subset P(\mathcal C)\cap B(p, \frac{2r}{L})\subset H_1\cap B(p, \frac{2r}{L})
 \end{align*}
For the denominator of $A$ we get
\begin{align}
 &P^{-1}_\ast \mathbb P_0 \left(T \cap \left(B(Z_1, \frac{r-2r_\xi}{L}) \cup B(Z_2, \frac{r-2r_\xi}{L})\right)\right)\nonumber\\
 &\leq \frac{1}{\lambda(\mathcal M)}\lambda \left(H_1 \cap \left(B(Z_1, \frac{r-2r_\xi}{L}) \cup B(Z_2, \frac{r-2r_\xi}{L})\right)\right) \label{upper_bound_denominator_A}
\end{align}
whereas for the nominator we get 
\begin{align}
 &P^{-1}_\ast \mathbb P_0 \left(\mathcal T \cap B(Z_1, \frac{r-2r_\xi}{L}) \cap B(Z_2, \frac{r-2r_\xi}{L})\right)\nonumber\\
 &\geq \frac{1}{\lambda(\mathcal M)}\lambda \left(H_2 \cap B(Z_1, \frac{r-2r_\xi}{L}) \cap B(Z_2, \frac{r-2r_\xi}{L})\right)\nonumber \\
& \geq\frac{1}{\lambda(\mathcal M)} \left[  \lambda \left(H_1 \cap B(Z_1, \frac{r-2r_\xi}{L}) \cap B(Z_2, \frac{r-2r_\xi}{L})\right) - \lambda \left((H_1  \setminus H_2) \cap B\left(Z_1, \frac{r-2r_\xi}{L}\right)\right)\right]\nonumber\\
& \geq \frac{1}{\lambda(\mathcal M)}  \left[ \lambda \left(H_1 \cap B(Z_1, \frac{r-2r_\xi}{L}) \cap B(Z_2, \frac{r-2r_\xi}{L})\right) - 72 \kappa' r^2 \lambda_{d-1} \left(B_{d-1} \left(\cdot, \frac{r-2r_\xi}{L}\right)\right)\right] \label{lower_bd_A_term}
\end{align}
In the above, we denote by \(\lambda_{d-1}(B_{d-1}(\cdot, r'))\) the volume of a \((d-1)\)-dimensional ball of radius \(r'\).
We have 
 \begin{align}
 \frac{72 \kappa' r^2 \lambda_{d-1} \left(B_{d-1} \left(\cdot, \frac{r-2r_\xi}{L}\right)\right)}{\lambda \left(H_1 \cap (B(Z_1, \frac{r-2r_\xi}{L}) \cup B(Z_2, \frac{r-2r_\xi}{L})\right)} & \leq 144 \kappa' r^2 \frac{\lambda_{d-1} \left(B_{d-1} \left(\cdot, \frac{r-2r_\xi}{L}\right)\right)}{\lambda_{d} \left(B_{d} \left(\cdot, \frac{r-2r_\xi}{L}\right)\right)}\nonumber\\
 & \leq 144 \pi^{-\frac{1}{2}} \kappa' r L \frac{r}{r-2r_\xi} \frac{\Gamma\left(\frac{d+2}{2}\right)}{\Gamma\left(\frac{d+1}{2}\right)}\label{upper_bound_projection_boundary}
 \end{align}
Due to the upper bound assumption on \(r_\xi\) we have \[\frac{r}{r-2r_\xi}\leq \frac{10}{9}\]
Moreoever, the upper bound assumption on \(r\) with respect to the reach implies 
\[L \leq \frac{41}{40}\]
The last factor can be upper bounded utilizing the logarithmic convexity of the gamma function
\begin{align*}
 \frac{\Gamma\left(\frac{d+2}{2}\right)}{\Gamma\left(\frac{d+1}{2}\right)} & \leq \sqrt{\frac{\Gamma\left(\frac{d+3}{2}\right)}{\Gamma\left(\frac{d+1}{2}\right)}}\\
 & = \sqrt{\frac{d+1}{2}}
\end{align*}
Together, we conclude from \eqref{upper_bound_projection_boundary}
 \begin{align}
 \frac{72 \kappa' r^2 \lambda_{d-1} \left(B_{d-1} \left(\cdot, \frac{r-2r_\xi}{L}\right)\right)}{\lambda \left(H_1 \cap (B(Z_1, \frac{r-2r_\xi}{L}) \cup B(Z_2, \frac{r-2r_\xi}{L})\right)} & \leq 66 \kappa' r \sqrt{d+1} \nonumber\\
 &\definedas \delta\label{upper_bound_projection_boundary_delta_term}
 \end{align}
 Our assumptions ensure \(\delta \leq \frac{1}{2}\), i.p. \((1-\delta) \geq (1+2\delta)^{-1}\). Using Lemma \ref{lem_half_space_coeff}, we conclude from \eqref{upper_bound_denominator_A}, \eqref{lower_bd_A_term} and \eqref{upper_bound_projection_boundary_delta_term}
\begin{align*}
 A &\geq q_d\left(L \frac{\|Z_1-Z_2\|}{r-2r_\xi} \right) (1+2\delta)^{-1} \\
 & \geq q_d\left(L \frac{\|M_1-M_2\|+2r_\xi }{r-2r_\xi} \right) (1+2\delta)^{-1} \\
 & = q_d(s) \left(\frac{q_d(s)}{q_d(L\frac{sr+2r_\xi}{r-2r_\xi})}\right)^{-1} (1+2\delta)^{-1}  
 \end{align*}
Using that the absolute value of the derivative of $q_d$ is bounded by $\frac{2}{\mathcal B\left(\frac{d+1}{2}, \frac{1}{2}\right)}$, we get
\begin{align*}
\frac{q_d(s)}{q_d(L\frac{sr+2r_\xi}{r-2r_\xi})} & \leq 1+ \frac{\frac{2}{\mathcal B\left(\frac{d+1}{2}, \frac{1}{2}\right)} \left( L\frac{sr+2r_\xi}{r-2r_\xi} -s \right)}{q_d(L\frac{sr+2r_\xi}{r-2r_\xi})}\\
& \leq 1 + 2 \frac{ L\frac{sr+2r_\xi}{r-2r_\xi} -s}{q_d(b') \mathcal B\left(\frac{d+1}{2}, \frac{1}{2}\right)}\\
& =1+ \frac{2}{q_d(b') \mathcal B\left(\frac{d+1}{2}, \frac{1}{2}\right)} \frac{(L-1)sr +2L r_\xi +2sr_\xi}{r-2r_\xi}\\
&\leq 1+  \frac{2}{q_d(b') \mathcal B\left(\frac{d+1}{2}, \frac{1}{2}\right)}  \left(6 \frac{r_\xi}{r}  +3(L-1) \right)\\
&\numleq{\ref{ineq_q_times_beta}} 1+\frac{2(d+1)}{\left(1-\frac{b'^2}{4}\right)^\frac{d+1}{2}}\left(6 \frac{r_\xi}{r}  +3(L-1) \right)
\end{align*}
Next, let us consider $B$. 
According to the upper bound \eqref{upper_bound_denominator_A} we have
\begin{align*}
 P^{-1}_\ast \mathbb P_0 \left(\mathcal T \cap \left(B(Z_1, \frac{r-2r_\xi}{L}\right) \cup B\left(Z_2, \frac{r-2r_\xi}{L}\right)\right) & \geq \frac{1}{4 \lambda(\mathcal M)} \lambda \left(B(\cdot, \frac{r-2r_\xi}{L})\right) 
\end{align*}
Consequently,
\begin{align*}
 B &= \frac{P^{-1}_\ast \mathbb P_0 \left(\mathcal T \cap (B(Z_1, \frac{r-2r_\xi}{L}) \cup B(Z_2, \frac{r-2r_\xi}{L}))\right)}{P^{-1}_\ast \mathbb P_0 \left(\mathcal T \cap (B(Z_1, r+2r_\xi) \cup B(Z_2, r+2r_\xi))\right)}\\
 &\geq \left(1+8\frac{\lambda(B(\cdot, r+2r_\xi) \setminus B(\cdot, \frac{r-2r_\xi}{L}))}{\lambda(B(\cdot, \frac{r-2r_\xi}{L}))}\right)^{-1}\\
 &=\left( 1 + 8 \frac{(r+2r_\xi)^d - \left(\frac{r-2r_\xi}{L}\right)^d}{\left(\frac{r-2r_\xi}{L}\right)^d} \right)^{-1}\\
 &=\left( 1+8 \left(\frac{L(r+2r_\xi)}{r-2r_\xi}\right)^d - 8\right)^{-1}
\end{align*}
Putting everything together, we end up
\begin{align*}
 q_{\mathbb P} &\geq L^{-d} AB\\
 &\geq  q_d(s) L^{-d} (1+2\delta)^{-1} \left(1+\frac{2(d+1)}{\left(1-\frac{b'^2}{4}\right)^\frac{d+1}{2}}\left(6 \frac{r_\xi}{r}  +3(L-1) \right)\right)^{-1} \left( 1+8 \left(\frac{L(r+2r_\xi)}{r-2r_\xi}\right)^d - 8 \right)^{-1}
\end{align*}
where
\begin{align*}
 \delta=  66 \kappa' r \sqrt{d+1}
\end{align*}
The last two factors can be lower bounded as follows
\begin{align*}
 \left(1+\frac{2(d+1)}{\left(1-\frac{b'^2}{4}\right)^\frac{d+1}{2}}\left(6 \frac{r_\xi}{r}  +3(L-1) \right)\right)^{-1}  &\geq \left(1+ \frac{12(d+1)\frac{r_\xi}{r}}{\left(1-\left(\frac{b'}{2}\right)^2\right)^\frac{d+1}{2}}\right)^{-1}\left(1+ \frac{2160(d+1)(\kappa r)^2}{\left(1-\left(\frac{b'}{2}\right)^2\right)^\frac{d+1}{2}}\right)^{-1}\\
 \left( 1+8 \left(\frac{L(r+2r_\xi)}{r-2r_\xi}\right)^d - 8 \right)^{-1} &\geq \left(1+ 8(L^d-1)\right)^{-1} \left(1+ 8\left( \left( \frac{r+2r_\xi}{r-2r_\xi}\right)^d-1\right)\right)^{-1}\end{align*}
We reorder the factors of the resulting lower bound by variables and get
\begin{align*}
 q_{\mathbb P} & \geq q_d(s) A_{\mathcal M} A_{\partial \mathcal C} A_\xi
\end{align*}
with
\begin{align*}
A_{\mathcal M} &= L^{-d} \left(1+8(L^d-1)\right)^{-1}\left(1+ \frac{2160(d+1)(\kappa r)^2}{\left(1-\left(\frac{b'}{2}\right)^2\right)^\frac{d+1}{2}}\right)^{-1}\\
A_{\partial \mathcal C} &=\left(1+132\kappa' r \sqrt{d+1} \right)^{-1}\\
A_\xi &= \left(1+ 8\left( \left( \frac{r+2r_\xi}{r-2r_\xi}\right)^d-1\right)\right)^{-1}\left(1+ \frac{12(d+1)\frac{r_\xi}{r}}{\left(1-\left(\frac{b'}{2}\right)^2\right)^\frac{d+1}{2}}\right)^{-1}
\end{align*}
Using the inequality $(1+x)^d \leq 1+2xd$ for $0<x\leq \frac{1}{d}$, we get
\begin{align*}
 L^{-d} \left(1+8(L^d-1)\right)^{-1} & \geq 1+11520 d \kappa^2 r^2
\end{align*}
Using the inequalities $760\kappa^2r^2(d+1)\leq 1$ and $\left(1-\left(\frac{b'}{2}\right)^2\right)^{\frac{d+1}{2}}\leq \frac{3}{4}$ we can simplifify
\begin{align*}
A_{\mathcal M } &\geq \left(1+ \frac{45360(d+1)(\kappa r)^2}{\left(1-\left(\frac{b'}{2}\right)^2\right)^\frac{d+1}{2}}\right)^{-1}
\end{align*}
Next, we discuss the term \(A_\xi\). Since $\frac{r_\xi}{r} \leq \frac{1}{10}$, we have \(\frac{r+2r_\xi}{r-2r_\xi}\leq 1+5\frac{r_\xi}{r}\). In view of $\frac{r_\xi}{r}\leq \frac{1}{5d}$ this implies analogously
\begin{align*}
 \left(1+ 8\left( \left( \frac{r+2r_\xi}{r-2r_\xi}\right)^d-1\right)\right)^{-1} &  \geq \left(1+80d \frac{r_\xi}{r}\right)^{-1}
\end{align*}
Using again $\frac{r_\xi}{r} \leq \frac{1}{5d}$ and $\left(1-\left(\frac{b'}{2}\right)^2\right)^{\frac{d+1}{2}}\leq \frac{3}{4}$, we simplify
\begin{align*}
 A_{\partial \mathcal C} &\geq \left(1+ \frac{264(d+1)\frac{r_\xi}{r}}{\left(1-\left(\frac{b'}{2}\right)^2\right)^\frac{d+1}{2}}\right)^{-1}
\end{align*}
The final result is 
\begin{align*}
 q_{\mathbb P} & \geq q_d(s)  \left(1+\epsilon_{\mathcal M}\right)^{-1}\left(1+\epsilon_{\mathcal \xi}\right)^{-1} \left(1+\epsilon_{\partial \mathcal C}\right)^{-1}
\end{align*}
\end{proof}
\begin{proof}[Proof of Theorem \ref{thm_propagation_boundary}]
Again, we can follow the proof of \cite[Theorem 3.1]{AWC}. It relies only on the inequality \( \theta_{ij}^{(k)} \geq q_{ij}^{(k)} \) for \( \|X_i-X_j\|\leq h_k \). This is ensured by Proposition \ref{lem_gap_coeff_boundary} and the construction of the adjusted volume coefficient.
\end{proof}

\begin{proof}[Proof of Corollary \ref{cor_prop_and_sep}]
This result combines Theorem \ref{thm_separation} and Theorem \ref{thm_propagation_boundary}. Note that for the proof of Theorem \ref{thm_separation}, we also need to consider the modification of the adjusted volume coefficient from 
\[\mathfrak q_{ij}^{(k)}=(1+\varepsilon_{\mathcal M})^{-1} (1+\varepsilon_{\xi})^{-1} q_d\left(\frac{\|X_i-X_j\|}{h_{k-1}}\right)\]
to
\[\mathfrak q_{ij}^{(k)} =(1+\epsilon_{\mathcal M})^{-1} (1+\epsilon_{\xi})^{-1}  \left(1+\epsilon_{\partial \mathcal C}\right)^{-1} q_d\left(\frac{\|X_i-X_j\|}{h_{k-1}}\right).\]
However, our assumption 
\[\varepsilon\geq 7\left(1+\epsilon_{\mathcal M}\right)\left(1+\epsilon_{\mathcal \xi}\right) \left(1+\epsilon_{\partial \mathcal C}\right) - 7\]
ensures that inequality \eqref{ineq_lowerbd_diff_q_theta} is still valid. So the results from both theorems are valid under the considered assumptions. Application of the union bound leads to the final result.
\end{proof}

\bibliography{mybib}

\end{document}